\renewcommand*\env@matrix[1][*\c@MaxMatrixCols c]{%
  \hskip -\arraycolsep
  \let\@ifnextchar\new@ifnextchar
  \array{#1}}
\newtheorem{coro}{{Corollary}}
\newtheorem{defi}{{ Definition}}
\newtheorem{teo}{Theorem}
\newtheorem{pro}{ Proposition }
\renewcommand{\d}{\operatorname{d}}
\newcommand{\Exp}[1]{\operatorname{e}^{#1}}
\newcommand{\diag}{\operatorname{diag}}
\newcommand{\C}{\mathbb{X}}
\newcommand{\T}{\mathbb{T}}
\newcommand{\prodint}[1]{\left\langle{#1}\right\rangle}
\begin{document}
	
	\title[CMV biorthogonal  Laurent polynomials: Geronimus --Uvarov transformations]{CMV biorthogonal  Laurent polynomials. II:\\ Christoffel formulas for Geronimus--Uvarov transformations}
	
	 \author[G Ariznabarreta]{Gerardo Ariznabarreta$^{1,2}$}
	 \address{Departamento de Física Teórica II (Métodos Matemáticos de la Física), Universidad Complutense de Madrid, Ciudad Universitaria, Plaza de Ciencias 1,  28040 Madrid, Spain}
	 \email{gariznab@ucm.es}
	
	 \author[M Mañas]{Manuel Mañas$^{1}$}
	 \email{manuel.manas@ucm.es}
	
	 \author[A Toledano] {Alfredo Toledano}
	 \email{alfrtole@ucm.es}
	
	 \thanks{$^1$Thanks financial support from the Spanish ``Ministerio de Economía y Competitividad" research project [MTM2015-65888-C4-3-P],\emph{ Ortogonalidad, teoría de la aproximación y aplicaciones en física matemática}}
	 \thanks{$^2$Thanks financial support from the Universidad Complutense de Madrid  Program ``Ayudas para Becas y Contratos Complutenses Predoctorales en España 2011"}
	
\begin{abstract}
This paper is a continuation of  the recent paper \emph{CMV biorthogonal  Laurent polynomials: Christoffel formulas for Christoffel and Geronimus transformations} by the same authors.
The behavior of  quasidefinite sesquilinear forms for Laurent polynomials  in the complex plane, characterized by bivariate linear functionals,  and corresponding CMV biorthogonal Laurent polynomial families --including Sobolev and discrete Sobolev  orthogonalities---   under
two type of Geronimus--Uvarov transformations is studied.
Either the  linear functionals are multiplied by a Laurent polynomial  and divided by the  complex conjugate of a Laurent polynomial, with the  addition of appropriate masses (linear functionals supported on the zeros of the perturbing Laurent polynomial in the denominator) or vice-versa,  multiplied by the complex conjugate of a Laurent polynomial  and divided by a Laurent polynomial.
The connection formulas for the CMV biorthogonal Laurent polynomials, their norms, and Christoffel--Darboux kernels are given.
For prepared Laurent polynomials, i.e. of the form $L_{\mathfrak C}(z)=L_{\mathfrak C,N_{\mathfrak C}}z^{N_{\mathfrak C}}+\cdots+L_{\mathfrak C,-N_{\mathfrak C}}z^{-N_{\mathfrak C}}$, $L_{\mathfrak C,N_{\mathfrak C}}L_{\mathfrak C,-N_{\mathfrak C}}\neq 0$ and
$L_{\Gamma}(z)=L_{\Gamma,N_\Gamma}z^{N_\Gamma}+\cdots+L_{\Gamma,-N_\Gamma}z^{-N_\Gamma}$, $L_{\Gamma,N_\Gamma}L_{\Gamma,-N_\Gamma}\neq 0$, these connection formulas lead to quasideterminantal (quotient of determinants) Christoffel  formulas  expressing an arbitrary degree  perturbed biorthogonal Laurent polynomial in terms of
$2N_{\mathfrak C}+2N_\Gamma$ unperturbed biorthogonal Laurent polynomials, their second kind functions or Christoffel--Darboux kernel and its mixed versions.
When the linear functionals are supported on the  unit circle, a particularly relevant role is played by the reciprocal polynomial, and
the Christoffel formulas  provide now with two possible ways of expressing  the same perturbed quantities in terms of the original ones, one using only the nonperturbed biorthogonal family of  Laurent polynomials, and the other using  the Christoffel--Darboux kernel and its mixed versions.	
\end{abstract}

 \keywords{ Biorthogonal Laurent polynomials, CMV, quasidefinite linear functionlas, Christoffel transformation, Geronimus--Uvarov  transformation, connection formulas,
	 	Christoffel transformation, quasideterminats, spectral jets}

\subjclass{42C05,15A23}

 \maketitle	


\section{Introduction}

In this paper we continue with the discussion of \cite{toledano} regarding transformations of CMV biorthogonal Laurent polynomials. In that precedent paper we studied Christoffel and Geronimus perturbations and the corresponding Christoffel formulas and in this one we complete the analysis with the Geronimus--Uvarov perturbations, which could be thought as a composition of a Christoffel and a Geronimus transformation.
For a deeper historical description and a more extended discussion of the state of the art regarding these issues we refer to the previous paper \cite{toledano}. Here we just reproduce some of the more essential facts for the discussion of the Geronimus--Uvarov transformations.

Christoffel formulas \cite{christoffel} for  perturbations, $\hat{u}= p(x) u$, where $u$ is a linear functional and  $p(x)$ is a polynomial, constitutes a classical result in the theory of orthogonal polynomials \cite{Chi,Sze,Gaut2}.
Connection formulas between two families of orthogonal polynomials allow to express any polynomial of a given degree $n$ as a linear combination of all polynomials of degree less than or equal to $n$ in the second family. Remarkably, for the  Christoffel  formulas, which are connection formulas, the number of terms does not grow with the degree $n$ but remain constant,  equal to the degree of the perturbing polynomial.
The problem of  given two functionals $\tilde u$ and $u$ such that  $\tilde u_x= \frac{p(x)}{q(x)}u_x$ where $p(x), q(x)$ are polynomials was  analyzed  by Uvarov in \cite{Uva} when $u,v$ are positive definite measures supported on the real line.  See also \cite{Zhe} for a discussion including spectral masses, in where the perturbation is written in the $q(x)\tilde u_x= p(x)u_x$, in that paper Zhedanov named these transformations as linear spectral transformations. We prefer to call them Geronimus--Uvarov.

Orthogonal polynomials in the unit circle $\mathbb T$ or Szeg\H{o} polynomials are  monic polynomials  $P_n$ of degree  $n$ such that $\int_{\T}P_n(z) z^{-k} \d \mu(z)=0$, for $ k=0,1,\dots,n-1$ \cite{Sze}.
 The extension to this  context of the three-term relations and tridiagonal Jacobi matrices needs of  Hessenberg matrices and give the Szeg\H{o} recursion relation, which  is expressed in terms of  the reciprocal Szeg\H{o} polynomials  and
the  Verblunsky  coefficients.
The papers  \cite{Jones-3,Jones-4} on   the strong Stieltjes moment problem can be considered as the starting point  for the consideration of orthogonal Laurent polynomials on the real line.
For recursion relations, Favard's theorem, quadrature problems, and Christoffel--Darboux formulas for Laurent polynomials on the unit circle $\T$ see  \cite{Thron,Barroso-Vera,CMV,Barroso-Daruis,Barroso-Snake}.
Orthogonal Laurent polynomials are dense in $L^2(\T,\mu)$, but Szeg\H{o} polynomials are not in general \cite{Bul} and \cite{Barroso-Vera}.  The CMV (Cantero--Moral--Velázquez) matrices \cite{CMV}  constitute a representation of the multiplication operator in terms of the basis of orthonormal Laurent polynomials and where discussed in \cite{Cantero} in  connection with Darboux transformations.
In \cite{Godoy1}  extensions of the Christoffel determinantal type formulas were given for the analogue of the Christoffel transformation, with an arbitrary degree polynomial having multiple roots, using the original Szeg\H{o} polynomials and its  Christoffel--Darboux kernels.
The Geronimus transformation for OPUC , with a perturbation of degree 2 and no masses,  was  discussed in  \cite{Godoy2}. In \cite{ismail}, alternative formulas \emph{á la Christoffel}, not based on the Christoffel--Darboux kernel  \cite{Godoy1},  were given in terms of determinantal expressions of the Szeg\H{o} polynomials and their reverse polynomials, also as Uvarov did in \cite{Uva}, they considered multiplication by rational functions, but no masses at all where discussed in this paper.
In \cite{R. W. Ruedemann}  some concrete cases where considered within  the biorthogonal scenario.
The transformations  considered in this work are also known as Darboux transformations \cite{matveev}. Indeed,
in the context of  the  Sturm--Liouville theory, Darboux discussed  in \cite{darboux2} a dimensional simplification of a geometrical transformation in two dimensions founded previously  \cite{moutard} which can be considered, as we called it today, a \emph{Darboux} transformation.

Our discussion framework is constructed upon the   \emph{noyaux-distribution} \cite{Schwartz1}. A  space of fundamental functions, in the sense of \cite{gelfand-distribu1,gelfand-distribu2}, and  the corresponding space of generalized functions provides with a linear functional setting for orthogonal polynomials. Discrete orthogonality appears when we consider linear functionals with discrete  and infinite support  \cite{Nikiforov1991Discrete}.
 We will consider an arbitrary nondegenerate continuous sesquilinear form given by a generalized kernel $u_{z_1,\bar z_2}$ with a quasidefinite Gram matrix. This scheme not only contains the more usual choices of Gram matrices like those  of Toeplitz  type on the unit circle, or those leading to  discrete orthogonality but also Sobolev orthogonality.

The  Gauss--Borel factorization problem,   has been applied  by our group in Madrid not only to the Christoffel and Geronimus transformations for sequilinear forms \cite{toledano} but also to the following cases
\begin{enumerate}
\item Laurent orthogonal polynomials on the unit circle  in \cite{CM}.
	\item Some extensions of the Christoffel--Darboux formula to generalized orthogonal polynomials \cite{adler-van moerbeke} and to multiple orthogonal polynomials,  \cite{am, ari3} .
	\item  Christoffel transformations and the relation with non-Abelian Toda hierarchies for real  matrix orthogonal polynomials were studied in \cite{alvarez2015Christoffel}, and in \cite{alvarez2016Transformation} we extended those results to include the Geronimus, Geronimus--Uvarov and Uvarov transformations.
	\item  Multiple orthogonal polynomials and multicomponent Toda \cite{amu}.
	\item  For matrix orthogonal Laurent polynomials on the unit circle, CMV orderings,  and non-Abelian  lattices on the circle \cite{ari}.
	\item Multivariate orthogonal polynomials in several real variables and corresponding multispectral integrable Toda hierarchy  \cite{MVOPR,ari0}.
Multivariate orthogonal polynomials on the multidimensional unit torus, the  multivariate extension of the CMV ordering and integrable Toda hierarchies \cite{ari1}.
\end{enumerate}

\subsection{Objectives, results,   layout of the paper and perspectives}

In the paper we continue and extend the studies of \cite{toledano}. As in that paper, we
consider a general  sesquilinear form in the complex plane  determined by a bivariate linear functional, its  biorthogonal Laurent families and their behavior under Geronimus--Uvarov perturbations, that can be thought as an appropriate consecutive composition of a Geronimus and a Christoffel perturbation. Both of these transformations were analyzed in \cite{toledano}.

The ideas of \cite{toledano} are used again in this paper, namely we consider a Gauss--Borel factorization of  the Gram matrix, which we assume to be quasidefinite, this will lead  to connection formulas for the biorthogonal Laurent polynomial families, the corresponding second kind functions and the standard and mixed Christoffel--Darboux kernels.  We find determinantal Christoffel formulas for the Geronimus--Uvarov transformation. Let us stress the relation of the results of the present paper and  the previous works \cite{Godoy1,Godoy2,ismail}
\begin{enumerate}
	\item In  \cite{Godoy1,Godoy2,ismail} the sesquilinear forms are supported on the diagonal with linear functionals of  zero order and positive (given, therefore, by a positive Borel measure). Moreover, the studies  in \cite{Godoy1,ismail} are restricted to measures supported on the unit circle. Our scheme allows for a more general biorthogonality and therefore includes Sobolev orthogonality and discrete Sobolev orthogonality with arbitrary support (with an infinity number points) on the complex plane.
	\item  The papers \cite{Godoy1,Godoy2} do not consider Geronimus--Uvarov transformations. In \cite{Godoy1} only the Christoffel transformations for orthogonal polynomials on the unit circle is analyzed, and in \cite{Godoy2} a particular Geronimus transformation of degree two, with no masses, is discussed. Regarding Geronimus--Uvarov transformations in the unit circle, \cite{ismail} does not incorporate masses at all. In our paper we include a very general class of masses.
\end{enumerate}
We have studied two possible Geronimus--Uvarov transformations, and give the corresponding  Christoffel formulas. These two transformations can be made to coincide when we have an initial zero order diagonal case supported on the unit circle, the Toeplitz situation.
Then, analogously as what we discovered in \cite{toledano},   two  alternative Christoffel formulas emerge.

The layout of the paper is as follows. We now proceed with a resume regarding basic facts  about CMV biorthogonal  Laurent polynomials.  In \S 2 we perturb a general quasidefinite sesquilinear form by multiplying the corresponding bivariate linear functional by a quotient of  a Laurent polynomial and the complex conjugate of another Laurent polynomial, one depending on the first variable of the bivariate linear functional and the other in the second variable, and another by the complex conjugate of the previous quotient.   We include the addition of masses
supported on the zeros of the  Laurent polynomials in the denominator of the perturbation. Quasidefiniteness of the perturbed sesquilinear forms allows for  the Gauss--Borel factorization, which leads to   connection  formulas for the biorthogonal Laurent polynomials,  second kind functions, Christoffel--Darboux kernels,  and mixed Christoffel--Darboux kernels, see Propositions \ref{Geronimus Connection Laurent}, \ref{Geronimus Connection Cauchy}, \ref{Geronimus Connection CD}, and \ref{Geronimus Connection mixed}. With this at hand we present Theorem \ref{Christoffel-Geronimus formulas}, where   the Christoffel--Geronimus--Uvarov formulas for both type of perturbations are given. We write, for the first time, this type of expressions including the masses.    In \S 3 we discuss possible reductions, first to zero order supported on the diagonal, then to the unit circle, and finally linear functionals taking real values.
  There is an Appendix containing some of the  proofs.

\subsection{Basic facts regarding CMV biorthogonal Laurent polynomials}

 \begin{defi} [Sequilinear forms]\label{def:sesquilinear}
	A sesquilinear form     $\prodint{\cdot,\cdot}$  in complex linear space $V$  is a continous map
$\prodint{\cdot,\cdot}: V\times V\longrightarrow \mathbb{C}$
such that for any triple $f,g,h\in  V$ the following conditions are satisfied
\begin{enumerate}
	\item  $\prodint{Af+Bg,h}=A\prodint{f,h}+B\prodint{g),h}$, $\forall A,B\in\mathbb{C}$,
	\item $\prodint{f,Ag+Bh}=\prodint{f,g}\bar A+\prodint{f,h}\bar B$, $\forall A,B\in\mathbb C$.
\end{enumerate}
\end{defi}

Given the ordered bi-infinite  basis $\{ z^{l}\}_{l=-\infty}^\infty$ or the semiinfinite CMV basis   $\{\chi^{(l)}(z)\}_{l=0}^\infty$,  where we have $\chi^{(l)}:=\begin{cases}
z^{l/2}, & \text{$l$ even}\\
z^{-(l+1)/2}, & \text{$l$ odd.}
\end{cases}$ of  $\mathbb C[z,z^{-1}]$ the sesquilinear form is characterized by the corresponding Gram matrix. For example, in the first case we have the biinfinite Gram matrix
$
g=\begin{bsmallmatrix}
g_{0,0 } &g_{0,1}& \dots\\
g_{1,0} & g_{1,1} & \dots\\
\vdots & \vdots
\end{bsmallmatrix}$ with $g_{k,l}=\prodint{(z_1)^k ,(z_2)^l }$, $k,l\in\mathbb Z$.
\begin{defi}[Laurent polynomial spectrum]
The zero set of a function $L(z)$ in  $\mathbb C^*:=\mathbb C\setminus\{0\}$ will be denoted by  $\sigma(L)$ and said to be the spectrum of $L(z)$.
\end{defi}
In this paper we will consider sesquilinear forms constructed in terms of  bivariate linear functionals with well-defined  support.
We will work with generalized functions $\mathcal F'$, i.e. continuous linear functionals over the space of fundamental functions $\mathcal F$, \cite{gelfand-distribu1,gelfand-distribu2},  such that $\mathbb C[z,z^{-1}]\subsetneq \mathcal F$ and there is a well defined notion of support (this last condition forbids the choice  $C[z,z^{-1}]= \mathcal F$).
The space of distributions is the space of generalized functions  when the  fundamental functions  space is the set of complex smooth functions \cite{Schwartz} with compact support  $ \mathcal D^*:=C_0^\infty(\mathbb C^*)$.
The zero set of a distribution $u\in(\mathcal D^*)'$ is the open region  $\Omega\subset \mathbb C^*$ whenever for every  $f(z)$ supported on $\Omega$ we have $\langle u, f\rangle =0$. Its complementary set, which is closed, is the support, $\operatorname{supp} (u)$, of the distribution $u$.   Obviously $\mathbb C[z,z^{-1}]\not\subset\mathcal D$ and, consequently, the space of test functions $\mathcal D$ is not suitable for our aims. However, the next example  gives an adequate scenario for our constructions.
When we take as our space of fundamental functions $\mathcal F$ the space of smooth functions in $\mathbb C^*$,  $\mathcal F=\mathcal E^*=C^\infty(\mathbb C^*)$ the corresponding space of generalized functions is the space $(\mathcal E^*)'$ of  distributions of compact support in $\mathbb C^*$. Now, as  $\mathbb C[z,z^{-1}]\subsetneq \mathcal E^*$ we deduce that $(\mathcal E^*)'\subsetneq (\mathbb C[z,z^{-1}])'\cap \mathcal (\mathcal D^*)'$.   The set of  distributions of compact support  is a first example of an appropriate framework for the consideration of polynomials and supports simultaneously.
For any bivariate linear functional   $u_{z_1,\bar z_2}$ with support $\operatorname{supp} (u_{z_1,\bar z_2})$ its projections in the axis $z_i$ are denoted by
$\operatorname{supp}_{i} (u_{z_1,\bar z_2})$, $i=1,2$. Sesquilinear forms are constructed in terms of bivariate linear functionals. 
\begin{defi}\label{def:sesquilineal}
We consider the following sesquilinear forms
 \begin{align*}
 \prodint{f(z_1), g(z_2)}_{u}&=\prodint{u_{ z_1,\bar z_2}, f( z_1)\otimes \overline{ g(z_2)}}, & f(z),g(z)&\in\mathcal F.
 \end{align*}
\end{defi}
 Hence, the following sesquilinear forms
$ \prodint{f(z_1), g(z_2)}_{u}=\sum\limits_{0\leq n,m\ll\infty}\int  \frac{\partial^nf}{\partial z^n} ( z_1)\overline{\frac{\partial^{m} g}{\partial z^m} (z_2)}\d\mu^{(m,n)}(z_1,z_2)$,
 for Borel  measures $\mu^{(m,n)}(z_1,z_2)$ in $\mathbb C^{2}$, with at least one of them with infinite support, are included in our considerations.
In the bi-infinite basis
$\{z^n\}_{n\in\mathbb Z}$ we have the Gram matrix
$g=[g_{n,m}]$ with $ g_{n,m}= \prodint {(z_1)^n, ( z_2)^{m}}_{u}=\prodint{u_{z_1,\bar z_2},z_1\otimes (\bar z_2)^m}$.

Following \cite{CMV,watkins}, we will use the  CMV   basis $\big\{\chi^{(0)},\chi^{(1)},\chi^{(2)},\dots\big\}$ with $\chi^{(l)}(z)=\begin{cases}
z^k, &l=2k,\\
z^{-k-1}, &l=2k+1.
\end{cases}$
\begin{defi}
Let us consider
\begin{align*}
\chi_{1}(z)&:= [1,0,z,0,z^{2},0,\ldots]^\top,  &
\chi_{2}(z)&:= [0,1,0,z,0,z^{2},0,\dots]^{\top}
\end{align*}
and
\begin{align*}
\chi_{1}^{*}(z)&:=z^{-1}\chi_{1}(z^{-1})=[z^{-1},0,z^{-2},0,z^{-3},0,\ldots]^\top,&
\chi_{2}^{*}(z)&:=z^{-1}\chi_{2}(z^{-1})= [0,z^{-1},0,z^{-2},0,z^{-3},0,\dots]^{\top}.
\end{align*}
In terms of which we define the CMV sequences
\begin{align*}
\chi(z)&:=\chi_{1}(z)+\chi_{2}^{*}(z)=[1,z^{-1},z,z^{-2},\ldots]^{\top},&
\chi^{*}(z)&:=\chi_{1}^{*}(z)+\chi_{2}(z)=[z^{-1},1,z^{-2},z,z^{-3},z^{2},\dots]^{\top}.
\end{align*}
The matrices
\begin{align*}\Upsilon&=\left[
\begin{array}{c|cc|cc|cc|cc|cc}
0 & 0 & 1 & 0 & 0 & 0 & 0 & 0 & 0 & 0 &\cdots  \\\hline
1 & 0 & 0 & 0 & 0 & 0 & 0 & 0 & 0 & 0 &\cdots  \\
0 & 0 & 0 & 0 & 1 & 0 & 0 & 0 & 0 & 0 &\cdots  \\\hline
0 & 1 & 0 & 0 & 0 & 0 & 0 & 0 & 0 & 0 &\cdots  \\
0 & 0 & 0 & 0 & 0 & 0 & 1 & 0 & 0 & 0 &\cdots  \\\hline
0 & 0 & 0 & 1 & 0 & 0 & 0 & 0 & 0 & 0 &\cdots  \\
0 & 0 & 0 & 0 & 0 & 0 & 0 & 0 & 1 & 0 &\cdots  \\\hline
0 & 0 & 0 & 0 & 0 & 1 & 0 & 0 & 0 & 0 &\cdots  \\
0 & 0 & 0 & 0 & 0 & 0 & 0 & 0 & 0 & 0 &\cdots  \\\hline
0 & 0 & 0 & 0 & 0 & 0 & 0 & 1 & 0 & 0 &\cdots  \\
\vdots & \vdots & \vdots & \vdots & \vdots & \vdots &
\vdots & \vdots & \vdots & \vdots &\ddots
\end{array}\right]
\end{align*}
Given a bivariate linear functional $u_{z_1,\bar z_2}$ and the associated sesquilinear form  we consider the corresponding Gram matrix
\begin{align*}
G=\prodint{\chi(z_1),(\chi(z_2))^\top}_u=\prodint{u_{z_1,\bar z_2},\chi(z_1)\otimes \big(\chi(z_2)\big)^\dagger}.
\end{align*}
\end{defi}

\begin{pro}
The semi-infinite matrix $\Upsilon$, is unitary
$	\Upsilon^{\top}=\Upsilon^{-1}$,
	and has the important spectral properties
$
	\Upsilon \chi(z)=z\chi(z)$ and $\Upsilon^{-1}\chi(z)=z^{-1}\chi(z)$.
\end{pro}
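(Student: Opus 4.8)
The plan is to verify the three claims about $\Upsilon$ directly from its explicit block structure and the action on the CMV sequence $\chi(z)$. First I would set up notation: the rows and columns of $\Upsilon$ are indexed by $l=0,1,2,\dots$, and I would extract from the displayed matrix the rule for where the single $1$ sits in each row. Reading off the pattern, row $0$ has its $1$ in column $2$, row $1$ in column $0$, row $2$ in column $4$, row $3$ in column $1$, row $4$ in column $6$, row $5$ in column $3$, and so on; that is, $\Upsilon_{l,m}=1$ exactly when $(l,m)$ belongs to the pairing
\begin{align*}
2k &\longleftrightarrow 2k+2, & 2k+1 &\longleftrightarrow 2k-1 \quad(k\geq 1), & 1&\longleftrightarrow 0,
\end{align*}
and $\Upsilon_{l,m}=0$ otherwise. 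Equivalently, each column also contains exactly one $1$, so $\Upsilon$ is a permutation matrix; the permutation $\pi$ it encodes is the bijection of $\{0,1,2,\dots\}$ given by $\pi(2k)=2k+2$, $\pi(2k+1)=2k-1$ for $k\geq1$, and $\pi(1)=0$. This is precisely the ``shift by one step in the $\dots,z^{-1},1,z,z^{-2},z^2,\dots$ Laurent ordering read through the CMV relabelling'' — I would phrase it that way to make the spectral statement transparent.

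Next, for unitarity: since $\Upsilon$ is a real matrix with exactly one $1$ in every row and in every column and zeros elsewhere, it is a permutation matrix, hence orthogonal, so $\Upsilon^\top\Upsilon=\Upsilon\Upsilon^\top=I$, i.e.\ $\Upsilon^\top=\Upsilon^{-1}$. One should check that the ``one $1$ per column'' claim really holds for the semi-infinite array (no column is left empty and none is doubly occupied); this follows because $\pi$ above is a genuine bijection of $\N\cup\{0\}$, which is a short combinatorial check. I would note in passing that $\Upsilon^\top$ encodes $\pi^{-1}$, with $\pi^{-1}(2k+2)=2k$, $\pi^{-1}(0)=1$, $\pi^{-1}(2k-1)=2k+1$.

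For the spectral identities, recall $\chi(z)=[1,z^{-1},z,z^{-2},z^2,z^{-3},\dots]^\top$, i.e.\ $\chi^{(2k)}(z)=z^k$ and $\chi^{(2k+1)}(z)=z^{-k-1}$. The $l$-th component of $\Upsilon\chi(z)$ is $\sum_m \Upsilon_{l,m}\chi^{(m)}(z)=\chi^{(\pi(l))}(z)$. Taking $l=2k$ with $k\geq 0$ gives $\chi^{(2k+2)}(z)=z^{k+1}=z\cdot z^k=z\,\chi^{(2k)}(z)$; taking $l=2k+1$ with $k\geq1$ gives $\chi^{(2k-1)}(z)=z^{-k}=z\cdot z^{-k-1}=z\,\chi^{(2k+1)}(z)$; and $l=1$ gives $\chi^{(0)}(z)=1=z\cdot z^{-1}=z\,\chi^{(1)}(z)$. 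In every case the $l$-th entry equals $z$ times the $l$-th entry of $\chi(z)$, so $\Upsilon\chi(z)=z\chi(z)$. Then $\Upsilon^{-1}\chi(z)=z^{-1}\chi(z)$ follows by multiplying through by $z^{-1}\Upsilon^{-1}$, using $\Upsilon^{-1}=\Upsilon^\top$ from the previous step. The only mildly delicate point — and the one I would be most careful about — is the bookkeeping at the ``boundary'' of the semi-infinite matrix, namely the exceptional pair $1\leftrightarrow 0$ and making sure the even/odd case split above is exhaustive and matches the printed $\Upsilon$ exactly; everything else is a one-line substitution. I would therefore present the proof as: (1) read off $\pi$ from the blocks, (2) observe $\Upsilon$ is a permutation matrix hence unitary, (3) substitute the monomials $\chi^{(l)}(z)$ into $\Upsilon\chi(z)=z\chi(z)$ case by case, and (4) deduce the $\Upsilon^{-1}$ identity from unitarity.
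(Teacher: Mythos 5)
Your proof is correct: the reading of the permutation encoded by $\Upsilon$ (namely $0\mapsto 2$, $1\mapsto 0$, $2k\mapsto 2k+2$, $2k+1\mapsto 2k-1$ for $k\geq 1$) matches the displayed matrix, the bijectivity check gives unitarity, and the case-by-case substitution of $\chi^{(2k)}(z)=z^{k}$, $\chi^{(2k+1)}(z)=z^{-k-1}$ verifies $\Upsilon\chi(z)=z\chi(z)$, from which $\Upsilon^{-1}\chi(z)=z^{-1}\chi(z)$ follows. The paper states this proposition without proof, as a basic fact recalled from the CMV literature, and your direct component-wise verification is exactly the intended argument.
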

For the truncation of  semi-infinity matrices we will use the following notation
$
A=\begin{bsmallmatrix}
A_{0,0} & A_{0,1} &\dots\\
A_{1,0} & A_{1,1} &\dots\\
\vdots & \vdots & \\
\end{bsmallmatrix}$ and $
A^{\left[l\right]}:=
\begin{bsmallmatrix}
A_{0,0} & A_{0,1} & \cdots & A_{0,l-1} \\
A_{1,0} & A_{1,1} & \cdots & A_{1,l-1} \\
\vdots &  \vdots&  &  \vdots\\
A_{l-1,0} & A_{l-1,1} & \cdots & A_{l-1,l-1} \\
\end{bsmallmatrix}$.
For the corresponding block structure we write
$
A=
\left[
\begin{array}{c|c}
	A^{\left[l\right]} &
	A^{\left[l,\geq l\right]}\\
	\hline
	A^{\left[\geq l,l\right]} & A^{\left[\geq l\right]}
	\end{array}
	\right]$. We will assume that the Gram matrix $G$  is  quasidefinite, i.e.,  all its principal minors are not zero, so that  the following  Gauss--Borel o $LU$ factorization of  $G$ holds
\begin{align}\label{LU}
G=S_{1}^{-1}H(S_{2}^{-1})^{\dag},
\end{align}
where $S_{1}$ and $S_{2}$ are lower unitriangular matrices and $H$ is a diagonal nonsingular matrix.
\begin{defi}Let us introduce the following vectors of Laurent poynomials
\begin{align}\label{P}
\phi_{1}(z)&:=S_{1}\chi(z), &
\phi_{2}(z)&:=S_{2}\chi(z).
\end{align}
\end{defi}
Its components $\phi_1(z)=[\phi_{1,0}(z), \phi_{1,1}(z),\dots]^\top$ and $\phi_2(z)=[\phi_{2,0}(z), \phi_{2,1}(z),\dots]^\top$ are such that
\begin{pro}[Biorthogonal polynomials]
The following biothogonality conditions
\begin{align*}
\prodint{\phi_{1,n}(z_1),\phi_{2,m}(z_2)}_u=\delta_{n,m}H_n,
\end{align*}
hold for  $n,m\in\{0,1,2,\dots\}$.
\end{pro}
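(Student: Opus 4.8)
The plan is to reduce the biorthogonality relations directly to the Gauss--Borel factorization \eqref{LU} by a computation with the sesquilinear form.

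First I would assemble the whole matrix of pairings at once. Writing $\phi_{1}(z)=S_{1}\chi(z)$ and $\phi_{2}(z)=S_{2}\chi(z)$ as in \eqref{P}, and noting that each component $\phi_{i,n}$ is a \emph{finite} linear combination of the CMV monomials $\chi^{(k)}$ because $S_{1},S_{2}$ are lower unitriangular, there are no convergence subtleties, so the left linearity (i) and right conjugate-linearity (ii) of Definition \ref{def:sesquilinear} allow the triangular matrices to be factored out of the form entrywise:
\begin{align*}
\prodint{\phi_{1,n}(z_1),\phi_{2,m}(z_2)}_u
=\sum_{k,l}(S_{1})_{n,k}\,\prodint{\chi^{(k)}(z_1),\chi^{(l)}(z_2)}_u\,\overline{(S_{2})_{m,l}}
=\bigl(S_{1}\,G\,S_{2}^{\dag}\bigr)_{n,m},
\end{align*}
where $G=\prodint{\chi(z_1),(\chi(z_2))^\top}_u$ is the CMV Gram matrix. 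In compact form this reads $\prodint{\phi_{1}(z_1),(\phi_{2}(z_2))^\top}_u=S_{1}GS_{2}^{\dag}$.

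Next I would substitute the factorization. Since $G$ is quasidefinite, \eqref{LU} gives $G=S_{1}^{-1}H(S_{2}^{-1})^{\dag}=S_{1}^{-1}H(S_{2}^{\dag})^{-1}$, so that
\begin{align*}
\prodint{\phi_{1}(z_1),(\phi_{2}(z_2))^\top}_u=S_{1}\,S_{1}^{-1}H(S_{2}^{\dag})^{-1}\,S_{2}^{\dag}=H.
\end{align*}
As $H=\diag(H_0,H_1,\dots)$ is the diagonal factor, reading off the $(n,m)$ entry yields $\prodint{\phi_{1,n}(z_1),\phi_{2,m}(z_2)}_u=\delta_{n,m}H_n$ for all $n,m\in\{0,1,2,\dots\}$, as claimed.

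There is essentially no genuine obstacle here beyond bookkeeping; the only points deserving a word of care are the legitimacy of moving the semi-infinite matrices $S_{1},S_{2}$ in and out of the continuous sesquilinear form --- guaranteed by the lower-triangularity of $S_{1},S_{2}$, which makes each component a finite sum, together with the (bi)linearity axioms --- and the nonsingularity of $H$ and invertibility of $S_{1},S_{2}$ needed to make sense of $S_{1}^{-1}$ and $(S_{2}^{\dag})^{-1}$, which is precisely the quasidefiniteness hypothesis on $G$.
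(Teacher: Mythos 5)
Your proof is correct and is exactly the intended argument: the paper introduces the Gauss--Borel factorization \eqref{LU} precisely so that $\prodint{\phi_{1}(z_1),(\phi_{2}(z_2))^\top}_u=S_{1}GS_{2}^{\dagger}=S_1S_1^{-1}H(S_2^{\dagger})^{-1}S_2^{\dagger}=H$, which is the computation you carry out. Your side remarks on the finiteness of the sums (from the lower triangularity of $S_1,S_2$) and on quasidefiniteness being what licenses the factorization are also the right ones.
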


\begin{coro}\label{ortogonalidad}
The orthogonality relations
 \begin{align*}
 \prodint{ \phi_{1,2k}(z_1),(z_2)^l}_u&=0, &  -k\leq & l \leq k-1, \\
 \prodint{ \phi_{1,2k+1}(z_1),(z_2)^l} _u&=0, &  -k\leq & l \leq k, \\
\prodint{  (z_1)^l,\phi_{2,2k}(z_2) }_u&=0, &  -k\leq & l \leq k-1, \\
\prodint{  (z_1)^l,\phi_{2,2k+1}(z_2) }_u&=0, &  -k\leq & l \leq k,
 \end{align*}
 are satisfied.
\end{coro}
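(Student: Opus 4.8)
The plan is to derive Corollary \ref{ortogonalidad} directly from the biorthogonality relations $\prodint{\phi_{1,n}(z_1),\phi_{2,m}(z_2)}_u=\delta_{n,m}H_n$ by exploiting the lower unitriangular structure of $S_1$ and $S_2$ together with the specific form of the CMV vector $\chi(z)=[1,z^{-1},z,z^{-2},\dots]^\top$. First I would record that since $S_1$ is lower unitriangular, $\phi_{1,n}(z)=\sum_{j=0}^{n}(S_1)_{n,j}\chi^{(j)}(z)$ with $(S_1)_{n,n}=1$, so $\phi_{1,n}$ lies in the span of $\{\chi^{(0)},\dots,\chi^{(n)}\}$; the same holds for $\phi_{2,n}$ with $S_2$. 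Conversely, inverting the triangular relation, each monomial $\chi^{(l)}(z)$ for $l\leq m$ lies in the span of $\{\phi_{2,0},\dots,\phi_{2,m}\}$ (and likewise for $\phi_1$).

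The key step is then a bookkeeping argument translating the linear index $l$ of the monomials into the CMV index. Recall $\chi^{(2k)}(z)=z^k$ and $\chi^{(2k+1)}(z)=z^{-k-1}$, equivalently the monomial $z^l$ for $l\geq 0$ is $\chi^{(2l)}$ and $z^{l}$ for $l<0$ is $\chi^{(-2l-1)}$. So for the first relation, take $n=2k$: I claim $z^l\in\operatorname{span}\{\chi^{(0)},\dots,\chi^{(2k-1)}\}$ precisely when $-k\leq l\leq k-1$, because the monomials appearing among $\chi^{(0)},\dots,\chi^{(2k-1)}$ are exactly $z^0,z^{-1},z^{1},z^{-2},\dots,z^{k-1},z^{-k}$, i.e.\ all $z^l$ with $-k\leq l\leq k-1$. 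Then for such $l$, writing $(z_2)^l$ as a combination of $\phi_{2,0},\dots,\phi_{2,2k-1}$ and using biorthogonality against $\phi_{1,2k}$ gives $\prodint{\phi_{1,2k}(z_1),(z_2)^l}_u=0$. The case $n=2k+1$ is identical except the monomials among $\chi^{(0)},\dots,\chi^{(2k)}$ are $z^l$ with $-k\leq l\leq k$, yielding the stated range. The third and fourth relations are the mirror image: now $(z_1)^l$ is expanded in $\phi_{1,0},\dots$ and paired against $\phi_{2,2k}$ or $\phi_{2,2k+1}$, using the antilinearity in the second slot (which only contributes a harmless complex conjugation of scalar coefficients, not affecting vanishing).

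The main obstacle, such as it is, is purely notational: one must be careful that the sesquilinear form is antilinear in its second argument, so when expanding $(z_1)^l=\sum_j c_j\,\phi_{1,j}(z_1)$ and substituting into $\prodint{(z_1)^l,\phi_{2,m}(z_2)}_u$ the coefficients come out unconjugated (linearity in the first slot), whereas expanding the second argument produces conjugated coefficients; either way the relevant sums are finite and vanish termwise by $\prodint{\phi_{1,j},\phi_{2,m}}_u=\delta_{j,m}H_m$ with $j<m$. I would present the four identities in two symmetric pairs, proving the first pair in detail and remarking that the second follows by the same argument with the roles of $S_1,S_2$ interchanged, and conclude.
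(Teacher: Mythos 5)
Your argument is correct: the paper states this corollary without proof, as an immediate consequence of the biorthogonality relations together with the lower unitriangular structure of $S_1$, $S_2$ and the CMV ordering $\Lambda_{2k}=\{1,z^{-1},z,\dots,z^{-k},z^{k}\}$, $\Lambda_{2k+1}=\{1,z^{-1},\dots,z^{k},z^{-k-1}\}$, which is exactly the bookkeeping you carry out. Your index ranges and the handling of the antilinearity in the second slot are both right, so this matches the intended proof.
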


\begin{defi}
	The Christoffel--Darboux  kernel is
	\begin{align}\label{kernelChristoffeldefinicion}
	K^{[l]}(\bar z_1,z_2):=
	\sum_{k=0}^{l-1}\overline{\phi{_{2,k}}(z_1)}H_{k}^{-1}\phi_{1,k}(z_2)
	=[\phi_{2}(z_1)^{\dagger}]^{[l]}(H^{-1})^{[l]}[\phi_{1}(z_2)]^{[l]}.%
	\end{align}
\end{defi}

\begin{pro}
The  Christoffel--Darboux kernel satisfies the projection   properties
\begin{align*}
\prodint{
	\sum_{j=0}^M f_j \phi_{1,j}(z_1),
	\overline{K^{[l]}(\bar z_2,z)}
}_u&=	\sum_{j=0}^{l-1} f_j \phi_{1,j}(z),&
\prodint{K^{[l]}(\bar z,z_1),
	\sum_{j=0}^M f_j \phi_{2,j}(z_2)}_u&=\overline{\sum_{j=0}^{l-1} f_j \phi_{2,j}(z)}.
\end{align*}
\end{pro}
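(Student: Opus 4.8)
The plan is to prove both reproducing identities by a direct expansion of the kernel followed by the biorthogonality relation. Write $K^{[l]}$ out via its definition \eqref{kernelChristoffeldefinicion}, substitute it into the sesquilinear form, and exploit the linearity of $\prodint{\cdot,\cdot}_u$ in its first slot and its antilinearity in the second (Definition~\ref{def:sesquilinear}) to pull the finite sums outside; what then remains are the elementary pairings $\prodint{\phi_{1,j}(z_1),\phi_{2,k}(z_2)}_u=\delta_{j,k}H_j$ furnished by the biorthogonality proposition.

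For the first identity, the crucial book-keeping point is that in $\prodint{\,\cdot\,,\overline{K^{[l]}(\bar z_2,z)}}_u$ the outer conjugation on the kernel is undone by the antilinearity of the form in its second argument, so that effectively the expression $K^{[l]}(\bar z_2,z)=\sum_{k=0}^{l-1}\overline{\phi_{2,k}(z_2)}H_k^{-1}\phi_{1,k}(z)$ itself is inserted in the second slot — and its factor $\overline{\phi_{2,k}(z_2)}$ is precisely what biorthogonality pairs against $\phi_{1,j}(z_1)$. Hence $\prodint{\sum_{j=0}^M f_j\phi_{1,j}(z_1),\overline{K^{[l]}(\bar z_2,z)}}_u=\sum_{j=0}^M\sum_{k=0}^{l-1}f_j\,\delta_{j,k}H_j\,H_k^{-1}\phi_{1,k}(z)$; the Kronecker delta collapses the $k$-sum, the scalar factor $H_jH_j^{-1}$ equals $1$, and all terms with $j\ge l$ are annihilated, leaving $\sum_{j=0}^{l-1}f_j\phi_{1,j}(z)$. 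The second identity is the mirror image: there the kernel sits in the first slot as $K^{[l]}(\bar z,z_1)=\sum_{k=0}^{l-1}\overline{\phi_{2,k}(z)}H_k^{-1}\phi_{1,k}(z_1)$, which is linear in $z_1$, so linearity in the first argument extracts the sum directly while antilinearity in the second produces the conjugates $\overline{f_j}$; biorthogonality then gives $\sum_{k=0}^{l-1}\overline{\phi_{2,k}(z)}H_k^{-1}H_k\overline{f_k}=\overline{\sum_{k=0}^{l-1}f_k\phi_{2,k}(z)}$.

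There is no genuine obstacle here: the argument is a one-line computation once the conjugations are tracked consistently. The two mild points of care are (i) distinguishing the purely formal overbar in the slot label of $K^{[l]}(\bar z_1,z_2)$ (a notational reminder that $\phi_{2,k}$ is conjugated in that slot) from the genuine conjugation generated by the antilinearity of $\prodint{\cdot,\cdot}_u$, and (ii) noting that the biorthogonality relations hold for all indices $n,m\ge 0$, which is exactly what permits the upper summation limit to drop from $M$ to $l-1$ regardless of whether $M\ge l-1$ or not.
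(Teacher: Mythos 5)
Your proof is correct: expanding $K^{[l]}$ by its definition, using linearity in the first slot and antilinearity in the second (which, as you note, undoes the outer conjugation and restores the unconjugated scalars $H_k^{-1}\phi_{1,k}(z)$), and then collapsing with $\prodint{\phi_{1,j},\phi_{2,k}}_u=\delta_{j,k}H_j$ is exactly the standard argument; the paper states this proposition without proof, and your computation is the one the authors clearly intend. Your two remarks on tracking the formal overbar in the slot label versus the genuine conjugation, and on why the sum truncates at $l-1$, address the only points where one could slip.
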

This implies that, when acting on the right  $\overline{K^{[l+1]}(\bar z_2,z)}$ projects  over $\Lambda_{l}:=\mathbb C\{\chi^{(k)}(z)\}_{k=0}^{l}$ while when acting on the left  $K^{[l+1]}(\bar z,z_1)$ projects on $\overline{\Lambda_l}$. Notice that
\begin{align*}
\Lambda_{2k}&=\{1,z^{-1},z,\dots,z^{-k},z^k\}, & \Lambda_{2k+1}&=\{1,z^{-1},z,\dots,z^k,z^{-k-1}\}.
\end{align*}

\begin{coro}\label{CDproyeccion}
	If $L(z)\in\Lambda_l:=\mathbb C\{\chi^{(k)}(z)\}_{k=0}^{l-1}$  then
	\begin{align*}
	\prodint{
		L(z_1),
		\overline{K^{[l]}(\bar z_2,z)}
	}_u&=	L(z),&
	\prodint{K^{[l]}(\bar z,z_1),
		L(z_2)}_u&=\overline{L(z)}.
	\end{align*}
\end{coro}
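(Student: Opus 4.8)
The plan is to read this off directly from the projection properties of the Christoffel--Darboux kernel established in the preceding Proposition. The one extra ingredient needed is that each truncated biorthogonal family $\{\phi_{1,k}(z)\}_{k=0}^{l-1}$ and $\{\phi_{2,k}(z)\}_{k=0}^{l-1}$ is a basis of $\Lambda_l=\mathbb C\{\chi^{(k)}(z)\}_{k=0}^{l-1}$. This follows at once from \eqref{P}: since $S_1$ and $S_2$ are lower unitriangular, $\phi_{1,k}$ and $\phi_{2,k}$ belong to $\mathbb C\{\chi^{(0)},\dots,\chi^{(k)}\}$ with coefficient $1$ on $\chi^{(k)}$, so the transition matrix from $\{\chi^{(k)}\}_{k=0}^{l-1}$ to either truncated family is triangular with unit diagonal, hence invertible.

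Granting this, I would fix $L(z)\in\Lambda_l$ and expand it in the first family, $L(z)=\sum_{j=0}^{l-1}f_j\phi_{1,j}(z)$ with uniquely determined coefficients $f_j\in\mathbb C$. Taking the first identity of the preceding Proposition with $M=l-1$ gives
\[
\prodint{L(z_1),\overline{K^{[l]}(\bar z_2,z)}}_u=\prodint{\textstyle\sum_{j=0}^{l-1}f_j\phi_{1,j}(z_1),\,\overline{K^{[l]}(\bar z_2,z)}}_u=\sum_{j=0}^{l-1}f_j\phi_{1,j}(z)=L(z),
\]
the first asserted equality. Expanding instead $L(z)=\sum_{j=0}^{l-1}g_j\phi_{2,j}(z)$ in the second family and applying the second identity of the Proposition with $M=l-1$ yields $\prodint{K^{[l]}(\bar z,z_1),L(z_2)}_u=\overline{\sum_{j=0}^{l-1}g_j\phi_{2,j}(z)}=\overline{L(z)}$, the second equality.

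If one prefers a self-contained argument, the same conclusion comes from inserting the definition \eqref{kernelChristoffeldefinicion} of $K^{[l]}$ and using sesquilinearity: $\prodint{L(z_1),\overline{K^{[l]}(\bar z_2,z)}}_u=\sum_{k=0}^{l-1}\prodint{L(z_1),\phi_{2,k}(z_2)}_u\,H_k^{-1}\phi_{1,k}(z)$, after which the biorthogonality relations $\prodint{\phi_{1,j},\phi_{2,k}}_u=\delta_{j,k}H_j$ collapse the $k$-th coefficient to $f_k H_k$ and reconstruct $L(z)$; the left-acting identity is analogous. There is no real obstacle here --- the statement is a formality once the kernel's projection property is in hand; the only point deserving an explicit line is the basis claim for the truncated families, and some care must be taken to track which slot of $\prodint{\cdot,\cdot}_u$ carries the conjugation.
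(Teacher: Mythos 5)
Your proof is correct and follows exactly the route the paper intends: the corollary is the immediate specialization of the preceding projection Proposition to $M=l-1$, once one notes that the lower unitriangular form of $S_1$ and $S_2$ makes each truncated family $\{\phi_{a,k}\}_{k=0}^{l-1}$ a basis of $\Lambda_l$. The alternative self-contained argument via the kernel definition and biorthogonality is also fine.
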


\begin{defi}
The  second kind functions are given by
	\begin{align*}
	C_{1}(z)&=\prodint{\phi_1(z_1), \frac{1}{\bar z-z_2}}_u
	=\prodint{u_{z_1,\bar z_2},\phi_1(z_1)\otimes \frac{1}{z-\bar z_2}}
	, &z&\not\in\overline{\operatorname{supp}_2(u)},\\
	(	C_{2}(z))^\dagger&=\prodint{\frac{1}{\bar z-z_1}, (\phi_2(z_2))^\top}_u=\prodint{u_{z_1,\bar z_2},\frac{1}{\bar z-z_1}\otimes(\phi_{2}(z_2))^\dagger },& z&\not\in\overline{ \operatorname{supp}_1(u)}.
 	\end{align*}
\end{defi}
\begin{defi}
The mixed Christoffel--Darboux  kernels are
	\begin{align}\label{kernelChristoffeldefinicionphiC}
	K_{C_2}^{[l]}(\bar z_1,z_2)&:=\sum_{k=0}^{l-1}\overline{C{_{2,k}}(z_1)}H_{k}^{-1}\phi_{1,k}(z_2)=[C_{2}(z_1)^{\dagger}]^{[l]}(H^{-1})^{[l]}[\phi_{1}(z_2)]^{[l]},& z_1&\not\in \overline{\operatorname{supp}_1(u)},\\
		K_{C_1}^{[l]}(\bar z_1,z_2)&:=\sum_{k=0}^{l-1}\overline{\phi{_{2,k}}(z_1)}H_{k}^{-1}C_{1,k}(z_2)=[\phi_{2}(z_1)^{\dagger}]^{[l]}(H^{-1})^{[l]}[C_{1}(z_2)]^{[l]},&z_2&\not\in \overline{\operatorname{supp}_2(u).}\label{kernelChristoffeldefinicionCphi}
	\end{align}
\end{defi}
\begin{pro}
The mixed kernels have the following expressions
	\begin{align*}
	K_{C_2}^{[l]}(\bar x_1,x_2)&=
	\prodint{\frac{1}{\bar x_1-z_1}, \overline{K^{[l]}(\bar z_2,x_2)}}_u,&
	K_{C_1}^{[l]}(\bar x_1,x_2)&:=\prodint{K^{[l]}(\bar x_1,z_1), \frac{1}{\bar x_2-z_2}}_u.
	\end{align*}
\end{pro}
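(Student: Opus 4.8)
The plan is to verify both identities by directly unwinding the definitions, using the single structural fact that the truncated Christoffel--Darboux kernel $K^{[l]}$ is, in each of its two arguments separately, a \emph{finite} linear combination of the biorthogonal Laurent polynomials $\phi_{1,k}$ (respectively $\phi_{2,k}$) with scalar coefficients; since the sum is finite, the sesquilinear form $\prodint{\cdot,\cdot}_u$ can be moved inside it by (anti)linearity, with no analytic subtlety.

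For the first identity I would fix $x_1\notin\overline{\operatorname{supp}_1(u)}$, so that the pairing against $\frac{1}{\bar x_1-z_1}$ is legitimate exactly as in the definition of $C_2$, and keep $x_2$ as a spectator parameter. Conjugating \eqref{kernelChristoffeldefinicion} presents $\overline{K^{[l]}(\bar z_2,x_2)}$, viewed as a function of $z_2$, in the form
\[
\overline{K^{[l]}(\bar z_2,x_2)}=\sum_{k=0}^{l-1}\phi_{2,k}(z_2)\,\overline{H_k^{-1}\,\phi_{1,k}(x_2)}.
\]
Pulling this finite sum out of the antilinear second slot of $\prodint{\cdot,\cdot}_u$ (Definition \ref{def:sesquilinear}) turns the left-hand side into $\sum_{k=0}^{l-1}\prodint{\frac{1}{\bar x_1-z_1},\phi_{2,k}(z_2)}_u\,H_k^{-1}\phi_{1,k}(x_2)$, and by the defining formula for the second kind functions each pairing equals the $k$-th component $\overline{C_{2,k}(x_1)}$ of $(C_2(x_1))^{\dagger}$. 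This reassembles into exactly the series \eqref{kernelChristoffeldefinicionphiC} defining $K_{C_2}^{[l]}(\bar x_1,x_2)$.

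The second identity is the mirror image: for $x_2\notin\overline{\operatorname{supp}_2(u)}$ one writes $K^{[l]}(\bar x_1,z_1)=\sum_{k=0}^{l-1}\overline{\phi_{2,k}(x_1)}H_k^{-1}\phi_{1,k}(z_1)$ as a finite combination of the $\phi_{1,k}(z_1)$, and pulls the sum out of the \emph{linear} first slot of $\prodint{\cdot,\cdot}_u$ to obtain $\sum_{k=0}^{l-1}\overline{\phi_{2,k}(x_1)}H_k^{-1}\prodint{\phi_{1,k}(z_1),\frac{1}{\bar x_2-z_2}}_u$; each pairing is the component $C_{1,k}(x_2)$ of $C_1(x_2)$, so the sum is \eqref{kernelChristoffeldefinicionCphi}. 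The only points that need attention are bookkeeping: in the first computation one must track the conjugation of the (possibly complex) diagonal entries $H_k$ as they cross the antilinear slot --- the conjugate coming from $\overline{K^{[l]}}$ and the conjugate coming from the antilinearity cancel, restoring the bare $H_k^{-1}$ of \eqref{kernelChristoffeldefinicionphiC} --- and one must keep the $\dagger$-versus-$\top$ conventions consistent between $C_2$ and $\phi_2$. There is no genuine obstacle: since $K^{[l]}$ has finite rank only linearity of $\prodint{\cdot,\cdot}_u$ on a finite-dimensional span is invoked, and the rational functions $\frac{1}{\bar x_i-z_i}$ belong to the fundamental space precisely under the support hypotheses already implicit in the statement.
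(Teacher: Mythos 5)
Your proof is correct: expanding $K^{[l]}$ as a finite sum, moving it through the (anti)linear slot of the sesquilinear form, and recognizing each resulting pairing as a component of $C_{2}$ or $C_{1}$ is exactly the intended argument, and your tracking of the two cancelling conjugations on $H_k^{-1}\phi_{1,k}(x_2)$ is the only delicate point and is handled properly. The paper states this proposition without proof (it is recalled as a basic fact from the companion paper), so there is nothing further to compare against.
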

Hence, the mixed kernels can be thought as the projections of the Cauchy kernels or, equivalently, the Cauchy transforms of the Christoffel--Darboux kernels.

\begin{defi}[Prepared  Laurent polynomials] For every $2 n$-degree polynomial $P(z)=P_{2 n}z^{2 n}+\dots+P_0\in\mathbb C[z]$ with $P_0\neq 0$,  its Féjer--Riesz corresponding  Laurent polynomial is given by
	\begin{align}\label{polinomio perturbador}
	L(z)&=z^{-n}P(z)=L_n z^{n}+\cdots + L_{-n} z^{-n}, & L_n&=P_{2 n},&L_{-n}&=P_0.
	\end{align}
	We say that a  Laurent polynomial is prepared whenever it is the  Féjer--Riesz corresponding  Laurent polynomial  of  an even degree polynomial non vanishing at the origin.
\end{defi}
For the consideration of arbitrary multiplicities of the zeros of the perturbing polynomials we need of
\begin{defi}[Spectral jets]
	Given a Laurent polynomial $L(z)$ with zeros and multiplicities  $ \{\zeta _ {i}, m_ {i} \} _ {i = 1} ^ {d} $ we introduce the spectral jet of a  function $ f (z)$ along $L(z)$ as follows:
	\begin{align*}
	\mathcal J^L_f
	:=\left[f(\zeta_{1}),f'(\zeta_{1}),\cdots,\frac{f^{(m_{1}-1)}(\zeta_{1})}{(m_{1}-1)!},\cdots,f(\zeta_{d}),f'(\zeta_{d})\cdots,\frac{f^{(m_{d}-1)}(\zeta_{d})}{(m_{d}-1)!}\right]\in\mathbb C^{2m}.
	\end{align*}
\end{defi}

\section{Geronimus--Uvarov transformations}

We now proceed with the consideration of the Geronimus--Uvarov transformation in the context of complex sesquilinear forms. We will present the extension of the formulas found by Uvarov \cite{Uva} for this situation, but now including masses. We continue with the discussion of \cite{toledano} and compose a first Geronimus transformation with a consecutive Christoffel transformation, in that precise order, and  as we have two types
of such perturbations, either if we multiply or divide by a Laurent polynomial or the complex conjugate of a Laurent polynomial, we have two type of Geronimus--Uvarov perturbations, namely
$\tilde u_{z_1,\bar z_2}^{(1,2)}=\widehat{(\check u^{(1)}_{z_1,\bar z_2})}^{(2)}$ and $\tilde u_{z_1,\bar z_2}^{(2,1)}=\widehat{(\check u^{(2)}_{z_1,\bar z_2})}^{(1)}$.
Let us now proceed with the precise definition.
\begin{defi}
	For a bivariate linear  functional  $u_{z_1,\bar z_2}$ with  a well-defined support,
for $a\in\{{1,2}\}$,	given Laurent polynomials  $L_\Gamma^{(a)}(z)=L^{(a)}_{\Gamma,N^+_{\Gamma}} z^{N^+_{\Gamma}}+\dots+L^{(a)}_{\Gamma,-N^-_{\Gamma}} z^{-N^-_{\Gamma}}$, and $L_{\mathfrak C}^{(a)}(z)=L^{(a)}_{\mathfrak C,N^+_{\mathfrak C}} z^{N^+_{\mathfrak C}}+\dots+L^{(a)}_{\mathfrak C,-N^-_{\mathfrak C}} z^{-N^-_{\mathfrak C}}$ such that  $L^{(a)}_{\Gamma,N^+_\Gamma}L^{(a)}_{\Gamma,-N^-_{\mathfrak C}}\neq 0$, $L^{(a)}_{\mathfrak C,N^+_{\mathfrak C}}L^{(a)}_{\mathfrak C,-N^-_{\mathfrak C}}\neq 0$,  $N^\pm_\Gamma,N^\pm_{\mathfrak C}\in\{1,2,\dots\}$,  and $\sigma(L_\Gamma^{(1)}(z))\cap\operatorname{supp}_1u=\varnothing$,
	$\overline{\sigma(L_\Gamma^{(2)}(z))}\cap\operatorname{supp}_2u=\varnothing$, we consider two possible families of  Geronimus--Uvarov transformations $\tilde u^{(1,2)}_{z_1,\bar z_2}$ and   $\tilde u^{(2,1)}_{z_1,\bar z_2}$ characterized by
	\begin{align*}
	L_\Gamma^{(1)}(z_1)\tilde u^{(1,2)}_{z_1,\bar z_2}&=u_{z_1,\bar z_2}\overline{L_{\mathfrak C}^{(2)}(z_2)},  &
\tilde u^{(2,1)}_{z_1,\bar z_2}\overline{L_\Gamma^{(2)}(z_2)}&=L_{\mathfrak C}^{(1)}(z_1)u_{z_1,\bar z_2} .
	\end{align*}
\end{defi}
The notation $L_\Gamma^{(a)}(z)$ makes reference to a perturbation of Geronimus type while $L_{\mathfrak C}^{(a)}(z)$ to a perturbation of Christoffel type.
Therefore, the perturbed bivariate linear  functionals   are
\begin{align*}
\tilde u^{(1,2)}_{z_1,\bar z_2}&=\frac{\overline{L_{\mathfrak C}^{(2)}(z_2)}
}{L_\Gamma^{(1)}(z_1)}	u_{z_1,\bar z_2}+\overline{L_{\mathfrak C}^{(2)}(z_2)}\sum_{i=1}^{d^{(1)}}\sum_{l=0}^{m^{(1)}_{i}-1}\frac{(-1)^{l}}{l!}\delta^{(l)}(z_1-\zeta^{(1)}_{i})
\otimes \overline{(\xi^{(1)}_{i,l})_{ z_2}},  \\
\tilde u^{(2,1)}_{z_1,\bar z_2}&=\frac{L_{\mathfrak C}^{(1)}(z_1)}{\overline{L_\Gamma^{(2)}(z_2)}}u_{z_1,\bar z_2}+L_{\mathfrak C}^{(1)}(z_1)
\sum_{i=1}^{d^{(2)}}\sum_{l=0}^{m^{(2)}_{i}-1}\frac{(-1)^{l}}{l!}(\xi^{(2)}_{i,l})_{z_1}\otimes\overline{\delta^{(l)}(z_2- \zeta^{(2)}_{i})},
\end{align*}
where $\zeta^{(a)}_{i}$ are zeros with multiplicities  $m^{(a)}_{i}$ of $L_\Gamma^{(a)}(z_1)$,  while $(\xi^{(a)}_{i,l})_{z_a}$ a are univariate  linear functionals.
In terms of  sesquilinear forms we have
\begin{align*}
\prodint{L^{(1)}_\Gamma (z_1)f(z_1),g(z_2)}_{\tilde u^{(1,2)}}&=\prodint{f(z_1), L_{\mathfrak C}^{(2)}(z_2)g(z_2)}_u, &
\prodint{f(z_1), L_\Gamma^{(2)}(z_2)g(z_2)}_{\tilde u^{(2,1)}}=\prodint{L_{\mathfrak C}^{(1)}(z_1)f(z_1),g(z_2))}_u, 
\end{align*}
for all $ f,g\in\mathcal F$.
\begin{pro}
Geronimus--Uvarov transformations associated with  the two couples of perturbing Laurent polynomials $ L_\Gamma^{(1)} (z)$,$L_{\mathfrak C}^{(2)}(z) $ and $L_\Gamma^{(2)} (z)$, $L_{\mathfrak C}^{(1)}(z)$ imply for the corresponding Gram matrices
	\begin{align}\label{GGeronimus}
L_\Gamma^{(1)}(\Upsilon)	\tilde  G^{(1)}&=G(  L_{\mathfrak C}^{(2)}(\Upsilon))^{\dagger}, & \tilde G^{(2)}\big(L_\Gamma^{(2)}(\Upsilon)\big)^\dagger&=L_{\mathfrak C}^{(1)}(\Upsilon)G.
	\end{align}
\end{pro}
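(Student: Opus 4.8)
The plan is to read both identities off the defining sesquilinear-form relations
$\prodint{L^{(1)}_\Gamma(z_1)f(z_1),g(z_2)}_{\tilde u^{(1,2)}}=\prodint{f(z_1),L_{\mathfrak C}^{(2)}(z_2)g(z_2)}_u$
and
$\prodint{f(z_1),L_\Gamma^{(2)}(z_2)g(z_2)}_{\tilde u^{(2,1)}}=\prodint{L_{\mathfrak C}^{(1)}(z_1)f(z_1),g(z_2)}_u$,
by evaluating them on the CMV basis, that is with $f(z_1)=\chi(z_1)$ and $g(z_2)=\chi(z_2)$ taken entrywise, so that the scalar relations $\prodint{\chi_i(z_1),\chi_j(z_2)}$ assemble into the matrix relations for $G$, $\tilde G^{(1)}$, $\tilde G^{(2)}$. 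The only substantive ingredient beyond bookkeeping is the spectral property $\Upsilon\chi(z)=z\chi(z)$ together with $\Upsilon^{-1}\chi(z)=z^{-1}\chi(z)$, which iterate to $\Upsilon^k\chi(z)=z^k\chi(z)$ for every $k\in\Z$; expanding a Laurent multiplier $L(z)=\sum_kL_kz^k$ and using linearity then gives the rule $L(z)\chi(z)=L(\Upsilon)\chi(z)$, turning scalar multiplication into multiplication by a constant semi-infinite matrix. I would record this rule first.

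Next I would treat the left slot. Since $L_\Gamma^{(1)}(\Upsilon)$ is a fixed matrix, linearity of $\prodint{\cdot,\cdot}$ in its first argument lets it be factored out on the left:
$\prodint{L_\Gamma^{(1)}(z_1)\chi(z_1),(\chi(z_2))^\top}_{\tilde u^{(1,2)}}=L_\Gamma^{(1)}(\Upsilon)\,\prodint{\chi(z_1),(\chi(z_2))^\top}_{\tilde u^{(1,2)}}=L_\Gamma^{(1)}(\Upsilon)\tilde G^{(1)}$,
and identically $\prodint{L_{\mathfrak C}^{(1)}(z_1)\chi(z_1),(\chi(z_2))^\top}_u=L_{\mathfrak C}^{(1)}(\Upsilon)G$ for the second relation. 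The step requiring care is the right slot, because the form is conjugate-linear there: writing $L_{\mathfrak C}^{(2)}(z_2)\chi(z_2)=L_{\mathfrak C}^{(2)}(\Upsilon)\chi(z_2)$ and transposing, $(L_{\mathfrak C}^{(2)}(\Upsilon)\chi(z_2))^\top=(\chi(z_2))^\top(L_{\mathfrak C}^{(2)}(\Upsilon))^\top$, and pulling the constant matrix out of the conjugate-linear argument yields its complex conjugate, so entrywise
$\prodint{\chi(z_1),(\chi(z_2))^\top(L_{\mathfrak C}^{(2)}(\Upsilon))^\top}_u=\prodint{\chi(z_1),(\chi(z_2))^\top}_u\,\overline{(L_{\mathfrak C}^{(2)}(\Upsilon))^\top}=G\,(L_{\mathfrak C}^{(2)}(\Upsilon))^\dagger$.
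Combining the two computations gives the first formula in \eqref{GGeronimus}; running the same argument on the second defining relation, where now $L_\Gamma^{(2)}$ sits in the right slot and $L_{\mathfrak C}^{(1)}$ in the left, produces $\tilde G^{(2)}(L_\Gamma^{(2)}(\Upsilon))^\dagger=L_{\mathfrak C}^{(1)}(\Upsilon)G$.

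I expect the main obstacle to be purely notational: keeping transposes, daggers and conjugates consistent while shuttling constant matrices in and out of a conjugate-linear argument, and verifying that the spectral identity genuinely extends to negative powers of $\Upsilon$ so that true Laurent (not merely polynomial) multipliers are covered. Nothing here uses the detailed structure of the mass terms appearing in $\tilde u^{(1,2)}$ and $\tilde u^{(2,1)}$: the argument only invokes the bilinear identities that characterize those functionals, which hold by definition under the stated support hypotheses $\sigma(L_\Gamma^{(1)})\cap\operatorname{supp}_1u=\varnothing$ and $\overline{\sigma(L_\Gamma^{(2)})}\cap\operatorname{supp}_2u=\varnothing$ guaranteeing the perturbed functionals are well defined.
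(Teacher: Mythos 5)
Your proposal is correct and follows essentially the same route as the paper: evaluate the defining relations on the CMV sequence $\chi$, use the spectral identity $\Upsilon^{\pm1}\chi(z)=z^{\pm1}\chi(z)$ to replace scalar Laurent multipliers by $L(\Upsilon)$, and pull the constant matrix out of the linear slot directly and out of the conjugate-linear slot as its adjoint. The paper phrases this in the bivariate-functional notation $\prodint{u_{z_1,\bar z_2},\cdot\otimes\cdot}$ rather than the sesquilinear-form notation, but the computation is the same.
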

\begin{proof}
It follows from
	\begin{align*}
	L_\Gamma^{(1)}(\Upsilon)\tilde{G}^{(1)}
	&=\prodint{ \tilde u^{(1,2)}_{z_1,\bar z_2},L_\Gamma^{(1)}(z_1)\chi(z_1)\otimes (\chi(z_2))^\dagger}
	\\
	&=\prodint{  u_{z_1,\bar z_2},\chi(z_1)\otimes (\chi(z_2))^\dagger\overline{L_{\mathfrak C}^{(2)}(z_2)}}
	\\
	&=\prodint{  u_{z_1,\bar z_2},\chi(z_1)\otimes (\chi(z_2))^\dagger}(  L_{\mathfrak C}^{(2)}(\Upsilon))^{\dagger},
	\\
\tilde{G}^{(2)}\big(L^{(2)}_\Gamma(\Upsilon)\big)^\dagger
	&=\prodint{ \tilde u^{(2,1)}_{z_1,\bar z_2},\chi(z_1)\otimes (\chi(z_2))^\dagger \overline{L_\Gamma^{(2)}(z_2)}}
	\\
	&=\prodint{ u_{z_1,\bar z_2},L_{\mathfrak C}^{(1)}(z_1)\chi(z_1)\otimes (\chi(z_2))^\dagger }
	\\
	&= L_{\mathfrak C}^{(1)}(\Upsilon)\prodint{ u_{z_1,\bar z_2},\chi(z_1)\otimes (\chi(z_2))^\dagger}.
	\end{align*}
\end{proof}

We assume that both Gram matrices  are quasidefinite, i.e. the  Gauss--Borel factorizations
\begin{align}\label{eq:quasidefGramGer}
\tilde G^{(1)}&=\big(\tilde S_1^{(1)}\big)^{-1}\tilde  H^{(1,2)}\big(\tilde S_2^{(1)}\big)^{-\dagger}, &
\check  G^{(2)}&=\big(\tilde S_1^{(2)}\big)^{-1}\tilde  H^{(2,1)}\big(\tilde S_2^{(2)}\big)^{-\dagger},
\end{align}
can be performed.

\subsection{Connection formulas for the CMV biorthogonal Laurent polynomials and its second kind functions}

\begin{defi}[Geronimus--Uvarov connectors]For a bivariate linear functional and Geronimus--Uvarov transformations  and two couples of perturbing Laurent polynomials $ \{L_\Gamma^{(1)} (z),L_{\mathfrak C}^{(1)}(z)\} $ and $ \{L_\Gamma^{(2)} (z),L_{\mathfrak C}^{(2)}(z)\} $, we associate  the following \emph{connectors}
	\begin{align*}
	\Omega^{(1,2)}_1&= S_1 L_\Gamma^{(1)}(\Upsilon)
	(\tilde S_1^{(1)})^{-1}, & 	\Omega^{(1,2)}_2&=\tilde S_2^{(1)}L_{\mathfrak C}^{(2)}(\Upsilon)( S_2)^{-1},\\
	\Omega^{(2,1)}_1&=\tilde S_1^{(2)}L_{\mathfrak C}^{(1)}(\Upsilon)( S_1)^{-1}, &	\Omega^{(2,1)}_2&= S_2 L_\Gamma^{(2)}(\Upsilon)
	(\tilde S_2^{(2)})^{-1}.
	\end{align*}
\end{defi}
\begin{pro}
Geronimus--Uvarov connectors satisfy
\begin{align*}
	\Omega^{(1,2)}_1 \tilde H^{(1,2)}&=H \big(\Omega^{(1,2)}_2\big)^\dagger, &
	\tilde H^{(2,1)} \big(\Omega^{(2,1)}_2\big)^\dagger &= \Omega^{(2,1)}_1 H.
	\end{align*}
\end{pro}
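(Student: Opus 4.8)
The strategy is to derive both identities purely algebraically from the two factorization relations already in hand: the Gauss--Borel factorization $G = S_1^{-1} H (S_2^{-1})^\dagger$ of the unperturbed Gram matrix, the analogous factorizations \eqref{eq:quasidefGramGer} of the perturbed Gram matrices, and the intertwining relations \eqref{GGeronimus} between $G$ and $\tilde G^{(1)}$, $\check G^{(2)}$. I would treat the two cases in parallel; they are mirror images of each other under the exchange of the two variables and of the roles of $\Gamma$ and $\mathfrak C$, so it suffices to carry out one of them carefully.

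For the first identity, I would start from $L_\Gamma^{(1)}(\Upsilon)\tilde G^{(1)} = G\big(L_{\mathfrak C}^{(2)}(\Upsilon)\big)^\dagger$, substitute $\tilde G^{(1)} = (\tilde S_1^{(1)})^{-1}\tilde H^{(1,2)}(\tilde S_2^{(1)})^{-\dagger}$ on the left and $G = S_1^{-1}H(S_2^{-1})^\dagger$ on the right, and then multiply on the left by $S_1$ and on the right by $\big((\tilde S_2^{(1)})^{-\dagger}\big)^{-1} = (\tilde S_2^{(1)})^{\dagger}$. This produces
\begin{align*}
S_1 L_\Gamma^{(1)}(\Upsilon)(\tilde S_1^{(1)})^{-1}\,\tilde H^{(1,2)} = H (S_2^{-1})^\dagger \big(L_{\mathfrak C}^{(2)}(\Upsilon)\big)^\dagger (\tilde S_2^{(1)})^{\dagger}.
\end{align*}
The left-hand side is exactly $\Omega^{(1,2)}_1 \tilde H^{(1,2)}$ by the definition of the connector. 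For the right-hand side I would observe that $(S_2^{-1})^\dagger \big(L_{\mathfrak C}^{(2)}(\Upsilon)\big)^\dagger (\tilde S_2^{(1)})^{\dagger} = \big(\tilde S_2^{(1)} L_{\mathfrak C}^{(2)}(\Upsilon) S_2^{-1}\big)^\dagger = \big(\Omega^{(1,2)}_2\big)^\dagger$, so the right-hand side is $H\big(\Omega^{(1,2)}_2\big)^\dagger$, which is the claimed identity. The second identity is obtained the same way: start from $\tilde G^{(2)}\big(L_\Gamma^{(2)}(\Upsilon)\big)^\dagger = L_{\mathfrak C}^{(1)}(\Upsilon) G$, insert the factorizations, multiply on the left by $\tilde S_1^{(2)}$ and on the right by $S_2^{\dagger}$, and recognize the two connectors $\Omega^{(2,1)}_1$ and $\Omega^{(2,1)}_2$ together with a factor of $H$ on the appropriate side.

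There is no serious obstacle here; the content of the proposition is really just bookkeeping with the defining relations, and the only point demanding a little care is the correct handling of the $\dagger$ (conjugate-transpose) operation — in particular that $(AB)^\dagger = B^\dagger A^\dagger$, that $(A^{-1})^\dagger = (A^\dagger)^{-1}$, and that a polynomial $L(\Upsilon)$ in the unitary shift $\Upsilon$ satisfies $\big(L(\Upsilon)\big)^\dagger = \bar L(\Upsilon^{\dagger}) = \bar L(\Upsilon^{-1})$, so that the adjoint of a Christoffel-type factor reappears in the right slot with the conjugated Laurent polynomial. Keeping track of which triangular factor carries a dagger (the $S_2$'s always appear daggered, coming from the antilinear second slot of the sesquilinear form) is the one place a sign-of-exponent or transposition error could creep in, so I would write out the manipulation slot by slot rather than collapsing it in one line.
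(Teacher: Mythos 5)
Your proposal is correct and follows exactly the paper's own argument: substitute the Gauss--Borel factorizations into the Gram-matrix intertwining relations \eqref{GGeronimus}, multiply by $S_1$ on the left and $(\tilde S_2^{(1)})^\dagger$ on the right (and the mirror manipulation for the second identity), and identify the resulting products with $\Omega^{(1,2)}_1\tilde H^{(1,2)}$ and $H\big(\Omega^{(1,2)}_2\big)^\dagger$ via $(ABC)^\dagger=C^\dagger B^\dagger A^\dagger$. No discrepancies with the paper's proof.
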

\begin{proof}
From \eqref{GGeronimus}  and  \eqref{eq:quasidefGramGer} we deduce that
	\begin{align*}
	L_\Gamma^{(1)}(\Upsilon)\big(\tilde  S_1^{(1)}\big)^{-1}\tilde H^{(1,2)}\big(\tilde S_2^{(1)}\big)^{-\dagger}&=(S_1)^{-1}H(S_2)^{-\dagger}(  L_{\mathfrak C}^{(2)}(\Upsilon))^{\dagger},\\
	\big(\tilde S_1^{(2)}\big)^{-1}\tilde H^{(2,1)}\big(\tilde S_2^{(2)}\big)^{-\dagger}\big(L_\Gamma^{(2)}(\Upsilon)\big)^{\dagger}&=L_{\mathfrak C}^{(1)}(\Upsilon)(S_1)^{-1}H(S_2)^{-\dagger}.
	\end{align*}
Hence
	\begin{align*}
S_1 L_\Gamma^{(1)}(\Upsilon)\big(\tilde S_1^{(1)}\big)^{-1}\tilde H^{(1,2)}&=H(S_2)^{-\dagger}(L_{\mathfrak C}^{(2)}(\Upsilon))^{\dagger}\big(\tilde S_2^{(1)}\big)^{\dagger},&
	\tilde H^{(2,1)}\big(\tilde S_2^{(2)}\big)^{-\dagger}(L_\Gamma^{(2)}(\Upsilon))^{\dagger}(S_2)^{\dagger}&=\tilde S_1^{(2)}L_{\mathfrak C}^{(1)}(\Upsilon)(S_1)^{-1}H.
	\end{align*}
\end{proof}
\begin{defi}
For our convenience we introduce for $\rho=\mathfrak C,\Gamma$ 
\begin{align*}
L^{(a)}_{\rho,-\underline{N_\rho}}&:=0, & \text{for $-\underline{N_\rho}< -N^-_\rho$,}\\
L^{(a)}_{\rho,\underline{N_\rho}}&:=0, & \text{for $\underline{N_\rho}> N^+_\rho$.}
\end{align*}
 
\end{defi}

\begin{pro}
	Geronimus--Uvarov connectors are banded semi-infinite matrices. In particular, if $\underline N_\Gamma:=\max(N^+_\Gamma,N^-_\Gamma)$ and $\underline N_{\mathfrak C}:=\max(N_{\mathfrak C}^+,N_{\mathfrak C}^-)$
	\begin{enumerate}
\item The connectors $ \Omega ^ {(1,2)} _ 1 $  and $ \Omega ^ {(2,1)} _ 2 $ have as possible nonzero diagonals the first $ 2\underline N_\Gamma$ superdiagonals and  $ 2\underline N_{\mathfrak C}$ subdiagonals.
\item The connectors $ \Omega ^ {(1,2)} _ 2 $  and $ \Omega ^ {(2,1)} _ 1 $have  as possible nonzero diagonals the first  $ 2\underline N_{\mathfrak C} $ superdiagonals and  $ 2\underline N_\Gamma$ subdiagonals .
\item Moreover, we have the formulas
		\begin{align}\label{Omegalambda}
			( \Omega^{(1,2)}_2)_{l,l-2\underline N_\Gamma}&= \overline{L^{(1)}_{\Gamma,(-1)^l\underline N_\Gamma}	\dfrac{ {\tilde H}^{(1,2)}_l}{  H_{k-2\underline N_\Gamma}}}, &
		(\Omega^{(2,1)}_1)_{l,l-2\underline N_\Gamma}&=\overline{ L^{(2)}_{\Gamma,(-1)^l\underline N_\Gamma}}	\dfrac{\tilde H^{(2,1)}_l}{H_{l-2\underline N_\Gamma}},&l\geq2&\underline N_\Gamma,\\
			( \Omega^{(1,2)}_2)_{l,l+2\underline N_{\mathfrak C}}&= {L^{(2)}_{{\mathfrak C},(-1)^l\underline N_{\mathfrak C}}}, &
		(\Omega^{(2,1)}_1)_{l,l+2\underline N_{\mathfrak C}}&={ L^{(1)}_{{\mathfrak C},(-1)^l\underline N_{\mathfrak C}}}.\label{Omegalambda2}
		\end{align}
	\end{enumerate}\end{pro}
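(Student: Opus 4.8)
The plan is to derive everything from two ingredients: the band structure of the powers $\Upsilon^j$, and the identities $\Omega^{(1,2)}_1\tilde H^{(1,2)}=H(\Omega^{(1,2)}_2)^\dagger$ and $\tilde H^{(2,1)}(\Omega^{(2,1)}_2)^\dagger=\Omega^{(2,1)}_1H$ just established. First I would note that $\Upsilon$ is a permutation matrix with $(\Upsilon)_{l,m}=0$ unless $|l-m|\le 2$ and that the same holds for $\Upsilon^{-1}=\Upsilon^\top$; by the product bound for banded matrices, $(\Upsilon^j)_{l,m}=0$ whenever $|l-m|>2|j|$, for every $j\in\Z$, so $L^{(a)}_\Gamma(\Upsilon)$ is banded with at most $2\underline N_\Gamma$ nonzero super- and $2\underline N_\Gamma$ nonzero subdiagonals, and $L^{(a)}_{\mathfrak C}(\Upsilon)$ with at most $2\underline N_{\mathfrak C}$ of each. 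A finer inspection, obtained by applying $\Upsilon^j\chi(z)=z^j\chi(z)$ to the CMV basis $\chi^{(l)}$, shows that for $j>0$ the outermost superdiagonal of $\Upsilon^j$ (at position $+2j$) carries a $1$ exactly on the even rows while the outermost subdiagonal (at $-2j$) carries a $1$ exactly on the odd rows, the two parities being interchanged for $j<0$. Feeding this into $L^{(2)}_{\mathfrak C}(\Upsilon)=\sum_j L^{(2)}_{{\mathfrak C},j}\Upsilon^j$, the only term reaching the outermost superdiagonal on an even row is the one carrying $\Upsilon^{\underline N_{\mathfrak C}}$, and on an odd row the one carrying $\Upsilon^{-\underline N_{\mathfrak C}}$, these never overlapping, so $\big(L^{(2)}_{\mathfrak C}(\Upsilon)\big)_{l,l+2\underline N_{\mathfrak C}}=L^{(2)}_{{\mathfrak C},(-1)^l\underline N_{\mathfrak C}}$, the zero-padding convention of the preceding definition absorbing the case where that monomial is absent; the same holds for the three other perturbing Laurent polynomials.

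For the banded character in (i) and (ii) the point is that each connector admits two presentations, each controlling one side of the band. From its definition $\Omega^{(1,2)}_1=S_1L^{(1)}_\Gamma(\Upsilon)(\tilde S^{(1)}_1)^{-1}$ is a lower-unitriangular matrix times a $2\underline N_\Gamma$-banded matrix times a lower-unitriangular matrix, hence has at most $2\underline N_\Gamma$ nonzero superdiagonals; dually, solving the first identity above gives $\Omega^{(1,2)}_1=H(\Omega^{(1,2)}_2)^\dagger(\tilde H^{(1,2)})^{-1}$, and since $(\Omega^{(1,2)}_2)^\dagger=(S_2)^{-\dagger}\big(L^{(2)}_{\mathfrak C}(\Upsilon)\big)^\dagger(\tilde S^{(1)}_2)^\dagger$ is an upper-unitriangular matrix times a $2\underline N_{\mathfrak C}$-banded matrix times an upper-unitriangular matrix, and multiplication by the diagonal matrices $H$ and $(\tilde H^{(1,2)})^{-1}$ does not affect the band, $\Omega^{(1,2)}_1$ has at most $2\underline N_{\mathfrak C}$ nonzero subdiagonals. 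Intersecting the two bounds is precisely (i) for $\Omega^{(1,2)}_1$, and the three remaining connectors follow in the same way, reading off from their definitions and from the two companion identities which of $\underline N_\Gamma,\underline N_{\mathfrak C}$ bounds the super- and which the subdiagonals.

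For the explicit entries \eqref{Omegalambda} and \eqref{Omegalambda2} I would extract the outermost diagonals from the one-sided (definitional) presentation. In $\Omega^{(1,2)}_2=\tilde S^{(1)}_2L^{(2)}_{\mathfrak C}(\Upsilon)(S_2)^{-1}$ the $(l,l+2\underline N_{\mathfrak C})$ entry can only be assembled by travelling along the identity diagonal of $\tilde S^{(1)}_2$, the outermost superdiagonal of $L^{(2)}_{\mathfrak C}(\Upsilon)$, and the identity diagonal of $(S_2)^{-1}$, hence equals $\big(L^{(2)}_{\mathfrak C}(\Upsilon)\big)_{l,l+2\underline N_{\mathfrak C}}=L^{(2)}_{{\mathfrak C},(-1)^l\underline N_{\mathfrak C}}$, which is the second relation in \eqref{Omegalambda2}; the same remark applied to $\Omega^{(2,1)}_1=\tilde S^{(2)}_1L^{(1)}_{\mathfrak C}(\Upsilon)(S_1)^{-1}$ yields its companion. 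For the outermost subdiagonals I would instead use the form $\Omega^{(1,2)}_2=\overline{\tilde H^{(1,2)}}\,(\Omega^{(1,2)}_1)^\dagger\,\overline{H}^{-1}$ read off from $\Omega^{(1,2)}_1\tilde H^{(1,2)}=H(\Omega^{(1,2)}_2)^\dagger$: its $(l,l-2\underline N_\Gamma)$ entry is $\overline{\tilde H^{(1,2)}_l}\,\overline{(\Omega^{(1,2)}_1)_{l-2\underline N_\Gamma,l}}\,\overline{H_{l-2\underline N_\Gamma}}^{-1}$, while $(\Omega^{(1,2)}_1)_{l-2\underline N_\Gamma,l}$ is the outermost superdiagonal entry of $\Omega^{(1,2)}_1=S_1L^{(1)}_\Gamma(\Upsilon)(\tilde S^{(1)}_1)^{-1}$, hence equals $L^{(1)}_{\Gamma,(-1)^l\underline N_\Gamma}$ by the argument just used; collecting the conjugates gives \eqref{Omegalambda} (with $l$ in place of the misprinted $k$ in the denominator), and $\Omega^{(2,1)}_1$ is handled identically through $\tilde H^{(2,1)}(\Omega^{(2,1)}_2)^\dagger=\Omega^{(2,1)}_1H$.

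The only genuinely delicate point, and the step I would write out most carefully, is the parity bookkeeping producing the $(-1)^l$ in the outermost diagonals of $L^{(a)}_\rho(\Upsilon)$: one must check that of the two monomials of maximal Laurent degree exactly one contributes to a given outermost diagonal on a row of given parity, so that no cancellation or doubling occurs, and that the zero-padding convention correctly absorbs the degenerate cases $N^+_\rho\neq N^-_\rho$. Everything else reduces to the band arithmetic and the diagonal-conjugation tracking indicated above.
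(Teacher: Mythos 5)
Your argument is correct: the one‑sided band bounds from the definitional presentations (lower‑unitriangular $\times$ banded $\times$ lower‑unitriangular, and its upper counterpart obtained from $\Omega^{(1,2)}_1\tilde H^{(1,2)}=H\big(\Omega^{(1,2)}_2\big)^\dagger$ and $\tilde H^{(2,1)}\big(\Omega^{(2,1)}_2\big)^\dagger=\Omega^{(2,1)}_1H$), together with the parity bookkeeping of the outermost diagonals of $\Upsilon^{\pm j}$ and the zero‑padding convention, is exactly the intended route, and your extraction of the extreme entries (including the $(-1)^{l-2\underline N_\Gamma}=(-1)^l$ step and the diagonal conjugation by $H$, $\tilde H$) reproduces \eqref{Omegalambda}--\eqref{Omegalambda2}, confirming that the $k$ in the first formula is a misprint for $l$. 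The paper states this proposition without proof, so there is nothing to diverge from; your write‑up supplies the standard argument it implicitly relies on.
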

To name these semi-infinite matrices as connectors is justified by

	\begin{pro}\label{Geronimus Connection Laurent}
The following connection formulas for the CMV biorthogonal Laurent polynomials hold
		\begin{align*}
		\Omega^{(1,2)}_1  \tilde\phi_1^{(1,2)}(z) &= L_\Gamma^{(1)}(z)\phi_1(z), & \Omega^{(1,2)}_2 \phi_2(z)&=L_{\mathfrak C}^{(2)}(z)\tilde\phi^{(1,2)}_2(z),\\
		\Omega_1^{(2,1)} \phi_1(z)&=L_{\mathfrak C}^{(2)}(z)\tilde\phi^{(2,1)}_1(z), & \Omega^{(2,1)}_2\tilde\phi^{(2,1)}_2(z)&=L_\Gamma^{(2)}(z) \phi_2(z).
		\end{align*}
	\end{pro}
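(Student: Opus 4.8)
The plan is to deduce all four identities directly from the definition of the connectors together with the spectral property of $\Upsilon$. Recall from \eqref{P} that $\phi_1(z)=S_1\chi(z)$, $\phi_2(z)=S_2\chi(z)$, and likewise $\tilde\phi_1^{(1,2)}(z)=\tilde S_1^{(1)}\chi(z)$, $\tilde\phi_2^{(1,2)}(z)=\tilde S_2^{(1)}\chi(z)$, $\tilde\phi_1^{(2,1)}(z)=\tilde S_1^{(2)}\chi(z)$, $\tilde\phi_2^{(2,1)}(z)=\tilde S_2^{(2)}\chi(z)$ from the analogous definitions for the perturbed families. Since $\Upsilon$ is invertible with $\Upsilon\chi(z)=z\chi(z)$ and $\Upsilon^{-1}\chi(z)=z^{-1}\chi(z)$, for any Laurent polynomial $L(z)=\sum_{k=-M}^{M}L_kz^k$ the matrix $L(\Upsilon)=\sum_{k=-M}^{M}L_k\Upsilon^{k}$ is a finite sum of well-defined powers of $\Upsilon$, and applying the spectral identities termwise gives $L(\Upsilon)\chi(z)=L(z)\,\chi(z)$; as $L(z)$ is a scalar it commutes past any semi-infinite matrix.

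Granting this, each identity is a one-line computation. For the first, insert the definition $\Omega^{(1,2)}_1=S_1L_\Gamma^{(1)}(\Upsilon)(\tilde S_1^{(1)})^{-1}$ and $\tilde\phi_1^{(1,2)}(z)=\tilde S_1^{(1)}\chi(z)$, so that the factor $(\tilde S_1^{(1)})^{-1}\tilde S_1^{(1)}$ collapses and we are left with $S_1L_\Gamma^{(1)}(\Upsilon)\chi(z)=S_1L_\Gamma^{(1)}(z)\chi(z)=L_\Gamma^{(1)}(z)\phi_1(z)$. The three remaining identities follow by the same mechanism: substitute $\phi_2(z)=S_2\chi(z)$ into $\Omega^{(1,2)}_2=\tilde S_2^{(1)}L_{\mathfrak C}^{(2)}(\Upsilon)S_2^{-1}$, substitute $\phi_1(z)=S_1\chi(z)$ into $\Omega^{(2,1)}_1=\tilde S_1^{(2)}L_{\mathfrak C}^{(1)}(\Upsilon)S_1^{-1}$, and substitute $\tilde\phi_2^{(2,1)}(z)=\tilde S_2^{(2)}\chi(z)$ into $\Omega^{(2,1)}_2=S_2L_\Gamma^{(2)}(\Upsilon)(\tilde S_2^{(2)})^{-1}$; in every case the matching triangular factor cancels, the spectral property converts $L(\Upsilon)$ into the scalar $L(z)$, and pulling the scalar out produces the claimed right-hand side, i.e.\ the Laurent polynomial appearing in the corresponding connector times the appropriate (perturbed or unperturbed) biorthogonal vector.

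There is essentially no obstacle here; the only points needing care are that all the triangular factors $S_1,S_2,\tilde S_1^{(a)},\tilde S_2^{(a)}$ are invertible, which is guaranteed by their unitriangularity and the assumed quasidefiniteness of $G$, $\tilde G^{(1)}$ and $\check G^{(2)}$ underlying \eqref{eq:quasidefGramGer}, so the connectors are well defined; and that $L(\Upsilon)$ is only a finite sum of powers of the invertible matrix $\Upsilon$, so no convergence question arises. One could alternatively obtain the same identities by combining the intertwining relations \eqref{GGeronimus} for the Gram matrices with the factorizations \eqref{eq:quasidefGramGer}, but the direct route above is the shortest.
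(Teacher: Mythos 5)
Your proof is correct and is precisely the argument the paper leaves implicit (no explicit proof of Proposition \ref{Geronimus Connection Laurent} is given in the text or in the Appendix): the matching unitriangular factors cancel, the spectral property gives $L(\Upsilon)\chi(z)=L(z)\chi(z)$ for any Laurent polynomial $L$, and the scalar commutes out. Note that your computation for the third identity actually yields $L_{\mathfrak C}^{(1)}(z)\tilde\phi^{(2,1)}_1(z)$ rather than the $L_{\mathfrak C}^{(2)}(z)\tilde\phi^{(2,1)}_1(z)$ printed in the statement; this is a typo in the Proposition, as confirmed by the form of equation \eqref{conexion} used later in the Appendix.
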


\begin{defi}
Given a Laurent polynomial $L(z)$ we consider
$	\delta L (z_1,z_2):=\frac{L(z_1)-L(z_2)}{z_1-z_2}$
and  the completely homogeneous symmetric polynomials
$	h_j(z_1,z_2):=(z_1)^j+(z_1)^{j-1}z_2+\dots +z_1(z_2)^{j-1}+(z_2)^{j}$ and their duals  $h^*_{j}(z_1,z_2):=(z_1z_2)^{-1}h_{j}\big((z_1)^{-1},(z_2)^{-1}\big)$ with $j\in\{0,1,2,\dots\}$.
\end{defi}

\begin{pro}\label{simetricos}
One has
$	\delta L(z_1,z_2) =\sum\limits_{j=1}^{n} L_{j}  h_{j-1}(z_1, z_2)-\sum\limits_{j=1}^{m}L_{-j} h^*_{j-1}(z_1, z_2)$
	and therefore the bivariate Laurent polynomial $ \delta L (z_1, z_2) $ is symmetrical and, fixing one of the variables, is a  Laurent polynomial   in the other variable of positive maximum degree $ n-1 $ and  negative degree  $-m $.
\end{pro}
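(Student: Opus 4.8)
The plan is to exploit linearity of the map $L\mapsto\delta L$ and reduce to monomials. Writing $L(z)=\sum_{j=1}^{n}L_{j}z^{j}+L_{0}+\sum_{j=1}^{m}L_{-j}z^{-j}$, it suffices to evaluate $\delta(z^{k})(z_1,z_2)=\dfrac{z_1^{k}-z_2^{k}}{z_1-z_2}$ for each $k\in\{-m,\dots,n\}$ and then recombine. For $k=0$ the numerator vanishes, which accounts for the absence of an $L_0$ term. For $k\geq 1$ the finite geometric identity $z_1^{k}-z_2^{k}=(z_1-z_2)\sum_{i=0}^{k-1}z_1^{k-1-i}z_2^{i}$ shows at once that $\delta(z^{k})(z_1,z_2)=h_{k-1}(z_1,z_2)$, by the very definition of the homogeneous symmetric polynomial $h_{k-1}$. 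Multiplying by $L_j$ and summing over $j=1,\dots,n$ yields the first sum in the asserted formula.

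For negative exponents I would first clear denominators: for $j\geq 1$,
$$z_1^{-j}-z_2^{-j}=\frac{z_2^{j}-z_1^{j}}{(z_1z_2)^{j}}=-\frac{z_1^{j}-z_2^{j}}{(z_1z_2)^{j}},$$
so by the positive-exponent case $\delta(z^{-j})(z_1,z_2)=-\dfrac{h_{j-1}(z_1,z_2)}{(z_1z_2)^{j}}$. The remaining point is to recognize the right-hand side as $-h^{*}_{j-1}(z_1,z_2)$. This is the one computation worth doing carefully: expand $h_{j-1}(z_1^{-1},z_2^{-1})=\sum_{i=0}^{j-1}z_1^{-(j-1-i)}z_2^{-i}$ and multiply by $(z_1z_2)^{-1}$ as in the definition of $h^{*}_{j-1}$; on the other side expand $(z_1z_2)^{-j}h_{j-1}(z_1,z_2)=\sum_{i=0}^{j-1}z_1^{-1-i}z_2^{i-j}$; a single re-indexing $i\mapsto j-1-i$ matches the two expansions. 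Multiplying by $L_{-j}$, summing over $j=1,\dots,m$, and carrying the overall sign produces the second sum, completing the proof of the displayed identity for $\delta L$.

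The remaining assertions then follow by inspection. Symmetry is clear either from the identity, since $h_{j}$ and $h^{*}_{j}$ are manifestly invariant under $z_1\leftrightarrow z_2$, or directly from $\dfrac{L(z_1)-L(z_2)}{z_1-z_2}=\dfrac{L(z_2)-L(z_1)}{z_2-z_1}$. For the degree statement, fix $z_2$ and read off the extreme powers of $z_1$: the top term of $h_{j-1}(z_1,z_2)$ is $z_1^{j-1}$, so the largest positive power occurring in $\delta L$ is $z_1^{n-1}$, with coefficient $L_{n}$; the bottom term of $h^{*}_{j-1}(z_1,z_2)$ is $z_1^{-j}z_2^{-1}$, so the most negative power is $z_1^{-m}$, with coefficient $-L_{-m}z_2^{-1}$. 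By symmetry the same bounds hold in $z_2$. I do not anticipate any genuine obstacle; the only delicate part is the bookkeeping in identifying $(z_1z_2)^{-j}h_{j-1}$ with $h^{*}_{j-1}$ and tracking the minus signs coming from the negative powers.
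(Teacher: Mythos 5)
Your proof is correct. The paper states Proposition \ref{simetricos} without proof (it is not among the results proved in the Appendix), and your monomial-by-monomial verification --- the finite geometric sum $\frac{z_1^{k}-z_2^{k}}{z_1-z_2}=h_{k-1}(z_1,z_2)$ for $k\geq 1$, together with the re-indexed identification $(z_1z_2)^{-j}h_{j-1}(z_1,z_2)=h^{*}_{j-1}(z_1,z_2)$ that produces the minus sign on the negative-power part --- is exactly the routine computation the authors are implicitly relying on, and your reading of the extreme degrees $n-1$ and $-m$ from the top term of $h_{n-1}$ and the bottom term of $h^{*}_{m-1}$ is likewise correct.
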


\begin{pro}\label{Geronimus Connection Cauchy}
The  second kind functions are subject to the following  connection formulas
	\begin{align}
 \label{CC12}
	(C_2(z))^\dagger \big(\Omega^{(1,2)}_2\big)^\dagger - L_\Gamma^{(1)}(\bar z)(\tilde C^{(1,2)}_2(z))^\dagger &=-\prodint{
		\delta L_\Gamma^{(1)}(z_1,\bar z),\big(\tilde\phi^{(1,2)}_2(z_2)\big)^\top}_{\tilde u^{(1,2)}},\\
	\label{CC21}	\Omega^{(2,1)}_1 C_{1}(z)-\tilde C_{1}^{(2,1)}(z) \overline{L_\Gamma^{(2)}}(z)&=-\prodint{ \tilde\phi^{(2,1)}_1(z_1),
		\delta L_\Gamma^{(2)}(z_2,\bar z)}_{ \tilde u^{(2,1)} },
	\\\label{CC11}
	\Omega^{(1,2)}_1 \tilde C^{(1,2)}_1(z)&=\prodint{ \phi_1(z_1), \frac{L_{\mathfrak C}^{(2)}(z_2)}{\bar{z}-z_2}}_{ u },
	\\\label{CC22}
		\big(\tilde C^{(2,1)}_2(z)\big)^\dagger\big(\Omega_2^{(2)}\big)^\dagger&=\prodint{ \frac{L_{\mathfrak C}^{(1)}(z_1)}{\bar{z}-z_1}, \big(\phi_2(z_2)\big)^\top}_{ u }.
	\end{align}
	\end{pro}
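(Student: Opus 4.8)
The plan is to read off each of the four identities \eqref{CC12}--\eqref{CC22} from the Laurent polynomial connection formulas of Proposition~\ref{Geronimus Connection Laurent} together with the two defining relations $\prodint{L^{(1)}_\Gamma (z_1)f(z_1),g(z_2)}_{\tilde u^{(1,2)}}=\prodint{f(z_1), L_{\mathfrak C}^{(2)}(z_2)g(z_2)}_u$ and $\prodint{f(z_1), L_\Gamma^{(2)}(z_2)g(z_2)}_{\tilde u^{(2,1)}}=\prodint{L_{\mathfrak C}^{(1)}(z_1)f(z_1),g(z_2)}_u$, now evaluated with $f$ or $g$ equal to the Cauchy kernel $\tfrac{1}{\bar z-z_i}$. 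Since the connectors are banded semi-infinite matrices, multiplying second kind functions by them and absorbing the constant matrix into the sesquilinear form is harmless; the one bookkeeping rule to keep in mind is that a constant matrix $M$ standing to the right of a pairing of a row of second kind functions is absorbed as its adjoint, $\prodint{f(z_1),(g(z_2))^\top}_u M^\dagger=\prodint{f(z_1),(Mg(z_2))^\top}_u$. I would treat \eqref{CC11} and \eqref{CC22} first, since there only the Christoffel factors occur and the whole argument is transport across the form, and then \eqref{CC12} and \eqref{CC21}, where the Geronimus factor has to be split off the Cauchy kernel.

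For \eqref{CC11}: from $\Omega^{(1,2)}_1\tilde C_1^{(1,2)}(z)=\prodint{\Omega^{(1,2)}_1\tilde\phi_1^{(1,2)}(z_1),\tfrac{1}{\bar z-z_2}}_{\tilde u^{(1,2)}}$, apply $\Omega^{(1,2)}_1\tilde\phi_1^{(1,2)}(z)=L_\Gamma^{(1)}(z)\phi_1(z)$ from Proposition~\ref{Geronimus Connection Laurent}, then use the defining relation for $\tilde u^{(1,2)}$ with $f=\phi_1$, $g=\tfrac{1}{\bar z-z_2}$ to obtain $\prodint{\phi_1(z_1),\tfrac{L_{\mathfrak C}^{(2)}(z_2)}{\bar z-z_2}}_u$. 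Formula \eqref{CC22} is the mirror computation: write $(\tilde C_2^{(2,1)}(z))^\dagger(\Omega^{(2,1)}_2)^\dagger=\prodint{\tfrac{1}{\bar z-z_1},(\Omega^{(2,1)}_2\tilde\phi_2^{(2,1)}(z_2))^\top}_{\tilde u^{(2,1)}}$, substitute $\Omega^{(2,1)}_2\tilde\phi_2^{(2,1)}(z)=L_\Gamma^{(2)}(z)\phi_2(z)$, and carry the factor $L_\Gamma^{(2)}(z_2)$ across with the defining relation for $\tilde u^{(2,1)}$ (with $f=\tfrac{1}{\bar z-z_1}$) to land on $\prodint{\tfrac{L_{\mathfrak C}^{(1)}(z_1)}{\bar z-z_1},(\phi_2(z_2))^\top}_u$.

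For \eqref{CC12} the same three moves give $(C_2(z))^\dagger(\Omega^{(1,2)}_2)^\dagger=\prodint{\tfrac{1}{\bar z-z_1},(\Omega^{(1,2)}_2\phi_2(z_2))^\top}_u=\prodint{\tfrac{1}{\bar z-z_1},L_{\mathfrak C}^{(2)}(z_2)(\tilde\phi_2^{(1,2)}(z_2))^\top}_u=\prodint{\tfrac{L_\Gamma^{(1)}(z_1)}{\bar z-z_1},(\tilde\phi_2^{(1,2)}(z_2))^\top}_{\tilde u^{(1,2)}}$, the last step using the defining relation for $\tilde u^{(1,2)}$. The new ingredient is the decomposition $\tfrac{L_\Gamma^{(1)}(z_1)}{\bar z-z_1}=-\delta L_\Gamma^{(1)}(z_1,\bar z)+\tfrac{L_\Gamma^{(1)}(\bar z)}{\bar z-z_1}$: by Proposition~\ref{simetricos} the summand $\delta L_\Gamma^{(1)}(z_1,\bar z)$ is, as a function of $z_1$, a genuine Laurent polynomial, so its pairing with $(\tilde\phi_2^{(1,2)}(z_2))^\top$ is unproblematic, while the other summand, after pulling the scalar $L_\Gamma^{(1)}(\bar z)$ out of the unconjugated first slot, reproduces $L_\Gamma^{(1)}(\bar z)(\tilde C_2^{(1,2)}(z))^\dagger$; rearranging is \eqref{CC12}. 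Formula \eqref{CC21} follows symmetrically: start from $\Omega^{(2,1)}_1 C_1(z)$, use $\Omega^{(2,1)}_1\phi_1(z)=L_{\mathfrak C}^{(1)}(z)\tilde\phi_1^{(2,1)}(z)$ and the defining relation for $\tilde u^{(2,1)}$ to reach $\prodint{\tilde\phi_1^{(2,1)}(z_1),\tfrac{L_\Gamma^{(2)}(z_2)}{\bar z-z_2}}_{\tilde u^{(2,1)}}$, then use $\tfrac{L_\Gamma^{(2)}(z_2)}{\bar z-z_2}=-\delta L_\Gamma^{(2)}(z_2,\bar z)+\tfrac{L_\Gamma^{(2)}(\bar z)}{\bar z-z_2}$; here the scalar $L_\Gamma^{(2)}(\bar z)$ sits in the conjugated second slot, so extracting it produces $\overline{L_\Gamma^{(2)}(\bar z)}=\overline{L_\Gamma^{(2)}}(z)$, which is exactly the coefficient appearing in \eqref{CC21}.

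The computations are routine once transposes, daggers and the left/right placement of the $\Omega$'s are kept straight. The point that genuinely needs care --- and the main obstacle --- is that all of this takes place at the level of the linear functionals applied to the Cauchy kernels $\tfrac{1}{\bar z-z_i}$, which do not belong to $\mathbb C[z,z^{-1}]$: the manipulations are valid precisely for $z$ outside the closures of the relevant projected supports of \emph{both} $u$ and the perturbed functional. Since the added masses are supported on $\sigma(L_\Gamma^{(a)})$, which by hypothesis is disjoint from $\operatorname{supp}_a u$, one has $\overline{\operatorname{supp}_a\tilde u}\subseteq\overline{\operatorname{supp}_a u}\cup\sigma(L_\Gamma^{(a)})$, so the second kind functions on the two sides of each identity share a common domain; combined with the compact-support/cutoff argument already used in \cite{toledano}, this legitimizes substituting $\tfrac{1}{\bar z-z_i}$ for $f$ or $g$ in the defining relations, and that extension is the only non-mechanical step.
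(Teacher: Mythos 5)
Your proposal is correct and follows essentially the same route as the paper's own proof in the Appendix: absorb the connector via Proposition \ref{Geronimus Connection Laurent}, transport the Christoffel factor across the sesquilinear form using the defining relations of $\tilde u^{(1,2)}$ and $\tilde u^{(2,1)}$ applied to the Cauchy kernel, and for \eqref{CC12}--\eqref{CC21} split off the difference quotient $\delta L_\Gamma^{(a)}$ (the paper combines the two pairings at the end rather than decomposing the kernel up front, but the algebra is identical). Your closing remark on the domain of validity of these manipulations for the non-polynomial test function $\tfrac{1}{\bar z - z_i}$ is a point the paper leaves implicit, and it is handled correctly.
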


	\begin{pro}
		For $l\geq 2\underline{N}_{\Gamma}$, the second kind functions satisfy the  connection formulas
		\begin{align}
		\label{CC12k>}
		(\Omega^{(1,2)}_2)_{l,l-2\underline{N}_{\Gamma}} C_{2,l-2\underline{N}_{\Gamma}}(z)+\dots+	(\Omega^{(1,2)}_2)_{l,l+2\underline{N}_{\mathfrak C}} C_{2,l+2\underline{N}_{\mathfrak C}}(z)&=  \overline{L_\Gamma^{(1)}}( z)\tilde C^{(1,2)}_{2,l}(z),\\
		\label{CC21k>}
		(\Omega^{(2,1)}_1)_{l,l-2\underline{N}_{\Gamma}} C_{1,l-2\underline{N}_{\Gamma}}(z)+\dots+	(\Omega^{(2,1)}_1)_{l,l+2\underline{N}_{\mathfrak C}} C_{1,l+2\underline{N}_{\mathfrak C}}(z)&= \overline{L_\Gamma^{(2)}}(z)\tilde C_{1,l}^{(2,1)}(z).
		\end{align}
	\end{pro}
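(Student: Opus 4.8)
The plan is to derive the "row-wise" connection formulas \eqref{CC12k>}, \eqref{CC21k>} from the matrix identities \eqref{CC12}, \eqref{CC21} of Proposition~\ref{Geronimus Connection Cauchy} by reading off a single row and controlling the right-hand side. First I would take the matrix equation \eqref{CC12}, namely $(C_2(z))^\dagger (\Omega^{(1,2)}_2)^\dagger - L_\Gamma^{(1)}(\bar z)(\tilde C^{(1,2)}_2(z))^\dagger = -\langle \delta L_\Gamma^{(1)}(z_1,\bar z),(\tilde\phi^{(1,2)}_2(z_2))^\top\rangle_{\tilde u^{(1,2)}}$, and pass to the adjoint so that it becomes an identity between semi-infinite column vectors, then extract the $l$-th component. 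On the left one gets $\sum_k (\Omega^{(1,2)}_2)_{l,k} C_{2,k}(z) - \overline{L_\Gamma^{(1)}}(z)\,\tilde C^{(1,2)}_{2,l}(z)$; since by the preceding Proposition the connector $\Omega^{(1,2)}_2$ is banded with nonzero entries only in rows $k$ with $l-2\underline N_\Gamma \le k \le l+2\underline N_{\mathfrak C}$, the sum truncates exactly to the displayed finite range, giving the left-hand side of \eqref{CC12k>} up to the sign of the second term. An analogous transposition-free reading of \eqref{CC21} yields \eqref{CC21k>}.

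The substantive step is showing that the $l$-th component of the right-hand side $-\langle \delta L_\Gamma^{(1)}(z_1,\bar z),(\tilde\phi^{(1,2)}_2(z_2))^\top\rangle_{\tilde u^{(1,2)}}$ vanishes once $l\geq 2\underline N_\Gamma$. Here I would invoke Proposition~\ref{simetricos}: for fixed $\bar z$, the bivariate Laurent polynomial $\delta L_\Gamma^{(1)}(z_1,\bar z)$, viewed as a function of $z_1$, is a Laurent polynomial of positive degree at most $N^+_\Gamma-1$ and negative degree at least $-N^-_\Gamma$, hence lies in the span $\Lambda_{2\underline N_\Gamma}=\mathbb C\{\chi^{(k)}\}_{k=0}^{2\underline N_\Gamma-1}$ (using $\underline N_\Gamma=\max(N^+_\Gamma,N^-_\Gamma)$ and the description $\Lambda_{2k}=\{1,z^{-1},z,\dots,z^{-k},z^k\}$). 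Then the biorthogonality relations satisfied by the perturbed family $\tilde\phi^{(1,2)}_2$ with respect to $\tilde u^{(1,2)}$ — precisely the analogue of Corollary~\ref{ortogonalidad}, which says $\prodint{L(z_1),\tilde\phi^{(1,2)}_{2,l}(z_2)}_{\tilde u^{(1,2)}}=0$ for every $L\in\Lambda_l$ whenever $l\geq 2\underline N_\Gamma$ — force the $l$-th entry to be zero. I would spell this out by writing $\delta L_\Gamma^{(1)}(z_1,\bar z)$ explicitly through the $h_{j-1}$, $h^*_{j-1}$ expansion of Proposition~\ref{simetricos} and checking degree bounds term by term.

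The main obstacle I anticipate is the bookkeeping with the even/odd CMV ordering: the span $\Lambda_l$ grows in "half steps", so I must be careful that a Laurent polynomial with positive degree $\le N^+_\Gamma-1$ and negative degree $\le N^-_\Gamma$ indeed sits inside $\Lambda_{2\underline N_\Gamma}$ and not merely $\Lambda_{2\underline N_\Gamma+1}$, and that the threshold in Corollary~\ref{ortogonalidad} (which distinguishes $\phi_{2,2k}$ from $\phi_{2,2k+1}$) lines up with $l\geq 2\underline N_\Gamma$ rather than $l> 2\underline N_\Gamma$. Once the degree accounting is pinned down, the remainder is the routine transcription of a matrix identity into its $l$-th scalar component together with the banded-support count already established, so no further genuinely new input is needed.
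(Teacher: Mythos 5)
Your proposal is correct and takes essentially the same route as the paper: the authors likewise extract the $l$-th component of the connection formulas \eqref{CC12} and \eqref{CC21}, use the bandedness of the connectors, and observe that the right-hand sides vanish for $l\geq 2\underline{N}_{\Gamma}$ because $\delta L_\Gamma^{(a)}$ has positive degree at most $N^+_\Gamma-1$ and negative degree at least $-N^-_\Gamma$ (Proposition \ref{simetricos}), so the orthogonality relations of Corollary \ref{ortogonalidad} applied to the perturbed functionals annihilate the pairing. The even/odd degree bookkeeping you flag does close exactly as you anticipate under the hypothesis $l\geq 2\underline{N}_{\Gamma}$.
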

	\begin{proof}
		It is a consequence of the orthogonality relations  in Corollary \ref{ortogonalidad}, because they involve
		\begin{align*}
		\prodint{\delta L_\Gamma^{(1)}(\bar z,z_1),\big(\tilde\phi^{(1,2)}_{2,l}(z_2)\big)^\top}_{\tilde u^{(1,2)}} =	\prodint{ \tilde \phi^{(2,1)}_{1,l}(z_1), \delta L_\Gamma^{(2)}(\bar z,z_2)}_{ \tilde  u^{(2,1)} }
		&=0,
		&l&\geq 2\underline{N}_{\Gamma}.
		\end{align*}
	\end{proof}

\subsection{Connection formulas for the Christoffel--Darboux kernels and their mixed versions}
\begin{defi}
	Let's  define the following upper triangular $2\underline N_\Gamma \times 2\underline N_\Gamma$ matrices
	\begin{align*}
	\Gamma^{(1,2)}_{2,l}&:=\begin{bmatrix}
	(\Omega^{(1,2)}_2)_{l,l-2\underline{N}_{\Gamma}} & (\Omega^{(1,2)}_2)_{l,l-2\underline{N}_{\Gamma}+1}&(\Omega^{(1,2)}_2)_{l,l-2\underline{N}_{\Gamma}+2}&\dots& (\bar\Omega^{(1,2)}_2)_{l,l-1}\\
	0 & (\Omega^{(1,2)}_2)_{l+1,l-2\underline{N}_{\Gamma}+1}&(\Omega^{(1,2)}_2)_{l+1,l-2\underline{N}_{\Gamma}+2}&\dots& (\Omega^{(1,2)}_2)_{l+1,l-1}\\
	0 &0 &(\Omega^{(1,2)}_2)_{l+2,l-2\underline{N}_{\Gamma}+1}&\dots& (\Omega^{(1,2)}_2)_{l+2,l-1}\\
	\vdots& &\ddots & &\vdots\\
	0&0&\dots& &(\Omega^{(1,2)}_2)_{l+2\underline{N}_{\Gamma}-1,l-1}
	\end{bmatrix},\\
	\Gamma^{(2,1)}_{1,l}&:=\begin{bmatrix}
	(\Omega^{(2,1)}_1)_{l,l-2\underline{N}_{\Gamma}} & (\Omega^{(2,1)}_1)_{l,l-2\underline{N}_{\Gamma}+1}&(\Omega^{(2,1)}_1)_{l,l-2\underline{N}_{\Gamma}+2}&\dots& (\Omega^{(2,1)}_1)_{l,l-1}\\
	0 & (\Omega^{(2,1)}_1)_{l+1,l-2\underline{N}_{\Gamma}+1}&(\Omega^{(2,1)}_1)_{l+1,l-2\underline{N}_{\Gamma}+2}&\dots& (\Omega^{(2,1)}_1)_{l+1,l-1}\\
	0 &0 &(\Omega^{(2,1)}_1)_{l+2,l-2\underline{N}_{\Gamma}+1}&\dots& (\Omega^{(2,1)}_1)_{l+2,l-1}\\
	\vdots& &\ddots & &\vdots\\
	0&0&\dots& &(\Omega^{(2,1)}_1)_{l+2\underline{N}_{\Gamma}-1,l-1}
	\end{bmatrix},\end{align*}\
and the following  lower triangular $2\underline{N}_{\mathfrak C}\times 2\underline{N}_{\mathfrak C}$ matrices		
		\begin{align*}
	\mathfrak C^{(1,2)}_{2,l}&:=\begin{bmatrix}
	(\Omega^{(1,2)}_2)_{l-2\underline{N}_{\mathfrak C},l} & 0 &0&\dots& 0\\
	(\Omega^{(1,2)}_2)_{l-2\underline{N}_{\mathfrak C}+1,l} & (\Omega^{(1,2)}_2)_{l-2\underline{N}_{\mathfrak C}+1,l+1}&0&\dots& 0\\
	(\Omega^{(1,2)}_2)_{l-2\underline{N}_{\mathfrak C}+2,l} &(\Omega^{(1,2)}_2)_{l-2\underline{N}_{\mathfrak C}+2,l+1} &(\Omega^{(1,2)}_2)_{l-2\underline{N}_{\mathfrak C}+2,l+2}&\dots& 0\\
	\vdots& &&\ddots  &\vdots\\
	(\Omega^{(1,2)}_2)_{l-1,l}&(\Omega^{(1,2)}_2)_{l-1,l+1}&(\Omega^{(1,2)}_2)_{l-1,l+2}&\dots& (\Omega^{(1,2)}_2)_{l-1,l+2\underline{N}_{\mathfrak C}-1}
	\end{bmatrix},\\
		\mathfrak C^{(2,1)}_{1,l}&:=\begin{bmatrix}
		(\Omega^{(2,1)}_1)_{l-2\underline{N}_{\mathfrak C},l} & 0 &0&\dots& 0\\
		(\Omega^{(2,1)}_1)_{l-2\underline{N}_{\mathfrak C}+1,l} & (\Omega^{(2,1)}_1)_{l-2\underline{N}_{\mathfrak C}+1,l+1}&0&\dots& 0\\
		(\Omega^{(2,1)}_1)_{l-2\underline{N}_{\mathfrak C}+2,l} &(\Omega^{(2,1)}_1)_{l-2\underline{N}_{\mathfrak C}+2,l+1} &(\Omega^{(2,1)}_1)_{l-2\underline{N}_{\mathfrak C}+2,l+2}&\dots& 0\\
		\vdots& &&\ddots  &\vdots\\
		(\Omega^{(2,1)}_1)_{l-1,l}&(\Omega^{(2,1)}_1)_{l-1,l+1}&(\Omega^{(2,1)}_1)_{l-1,l+2}&\dots& (\Omega^{(2,1)}_1)_{l-1,l+2\underline{N}_{\mathfrak C}-1}
		\end{bmatrix}.
	\end{align*}
\end{defi}

\begin{pro}\label{Geronimus Connection CD}
For $l \geq 2\max (\underline{N}_{\mathfrak C},\underline{N}_{\Gamma})$, the Christoffel--Darboux kernels and their Geronimus--Uvarov transformations
	satisfy	the following connection formulas
\begin{multline}\label{GerKerNor1}
 \overline{L_{\mathfrak C}^{(2)}( z_1)}	{\tilde K^{(1,2),[l]}(\bar z_1,z_2)}-{L_\Gamma^{(1)}(z_2)} {K^{[l]}(\bar z_1,z_2)}\\={\Big[\tilde \phi^{(1,2)}_{1,l-2\underline{N}_{\mathfrak C}}(z_2)\big({\tilde H}_{l-2\underline{N}_{\mathfrak C}}^{(1,2)}\big)^{-1},\dots,
		\tilde \phi^{(1,2)}_{1,l+2\underline{N}_{\Gamma}-1}(z_2)\big({\tilde H}_{l+2\underline{N}_{\Gamma}-1}^{(1,2)}\big)^{-1}\Big]}	\begin{bmatrix}
	0_{2\underline{N}_{\mathfrak C}\times 2\underline{N}_{\Gamma}} & \overline{\mathfrak C_{2,l}^{(1,2)}}\\
-	 \overline{\Gamma_{2,l}^{(1,2)}}& 0_{2\underline{N}_{\Gamma}\times 2\underline{N}_{\mathfrak C}}
	\end{bmatrix}
\begin{bmatrix}
\overline{ \phi_{2,l-2\underline{N}_{\Gamma}}(z_1)}\\ \vdots\\  \overline{
	\phi_{2,l+2\underline{N}_{\mathfrak C}-1}(z_1)}\end{bmatrix},
\end{multline}
\begin{multline}
\label{GerKerNor2}
L_{\mathfrak C}^{(1)}(z_2)	\tilde K^{(2,1),[l]}(\bar z_1,z_2)- \overline{ L_\Gamma^{(2)}( z_1)} K^{[l]}(\bar z_1,z_2)\\={\Big[\overline{\tilde \phi^{(2,1)}_{2,l-2\underline{N}_{\mathfrak C}}(z_1)}({\tilde H}_{l-2\underline{N}_{\mathfrak C}}^{(2,1)})^{-1},\dots,
	\overline{\tilde \phi^{(2,1)}_{2,l+2\underline{N}_{\Gamma}-1}(z_1)}({\tilde H}_{l+2\underline{N}_{\Gamma}-1}^{(2,1)})^{-1}\Big]}\begin{bmatrix}
0_{2\underline{N}_{\mathfrak C}\times 2\underline{N}_{\Gamma}} &  \mathfrak C_{1,l}^{(2,1)}\\
- \Gamma_{1,l}^{(2,1)}& 0_{2\underline n\times 2\underline{N}_{\mathfrak C}}
\end{bmatrix}
\begin{bmatrix}
 \phi_{1,l-2\underline{N}_{\Gamma}}(z_2)\\ \vdots\\
	\phi_{1,l+2\underline{N}_{\mathfrak C}-1}(z_2)\end{bmatrix}.
	\end{multline}
\end{pro}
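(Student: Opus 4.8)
The whole computation should be carried out at the level of the semi-infinite matrices. Write $\Pi_l$ for the diagonal truncation projector (its first $l$ diagonal entries equal $1$, the rest $0$), which commutes with every diagonal matrix — in particular with $H^{-1}$, $(\tilde H^{(1,2)})^{-1}$ and $(\tilde H^{(2,1)})^{-1}$ — and note the matrix forms of the kernels, $K^{[l]}(\bar z_1,z_2)=\phi_2(z_1)^\dagger\Pi_l H^{-1}\phi_1(z_2)$ and $\tilde K^{(1,2),[l]}(\bar z_1,z_2)=\tilde\phi^{(1,2)}_2(z_1)^\dagger\Pi_l(\tilde H^{(1,2)})^{-1}\tilde\phi^{(1,2)}_1(z_2)$, both genuinely finite sums. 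The plan is: (i) rewrite $\overline{L_{\mathfrak C}^{(2)}(z_1)}\tilde K^{(1,2),[l]}$ using the connection formulas of Proposition~\ref{Geronimus Connection Laurent} and the connector identities; (ii) isolate the piece that is exactly $L_\Gamma^{(1)}(z_2)K^{[l]}$; and (iii) express what is left over as a finite bilinear form governed by a commutator with $\Pi_l$ and match it against the announced block anti-diagonal matrix.

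For (i)--(ii): the adjoint of $\Omega^{(1,2)}_2\phi_2(z)=L_{\mathfrak C}^{(2)}(z)\tilde\phi^{(1,2)}_2(z)$ reads $\overline{L_{\mathfrak C}^{(2)}(z_1)}\tilde\phi^{(1,2)}_2(z_1)^\dagger=\phi_2(z_1)^\dagger(\Omega^{(1,2)}_2)^\dagger$, hence $\overline{L_{\mathfrak C}^{(2)}(z_1)}\tilde K^{(1,2),[l]}=\phi_2(z_1)^\dagger(\Omega^{(1,2)}_2)^\dagger\Pi_l(\tilde H^{(1,2)})^{-1}\tilde\phi^{(1,2)}_1(z_2)$. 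I would then split $(\Omega^{(1,2)}_2)^\dagger\Pi_l=\Pi_l(\Omega^{(1,2)}_2)^\dagger+[(\Omega^{(1,2)}_2)^\dagger,\Pi_l]$; on the first summand apply the connector identity in the form $(\Omega^{(1,2)}_2)^\dagger(\tilde H^{(1,2)})^{-1}=H^{-1}\Omega^{(1,2)}_1$ followed by the remaining connection formula $\Omega^{(1,2)}_1\tilde\phi^{(1,2)}_1(z)=L_\Gamma^{(1)}(z)\phi_1(z)$, which collapses that summand to $L_\Gamma^{(1)}(z_2)\phi_2(z_1)^\dagger\Pi_l H^{-1}\phi_1(z_2)=L_\Gamma^{(1)}(z_2)K^{[l]}(\bar z_1,z_2)$. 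This leaves
\begin{align*}
\overline{L_{\mathfrak C}^{(2)}(z_1)}\tilde K^{(1,2),[l]}(\bar z_1,z_2)-L_\Gamma^{(1)}(z_2)K^{[l]}(\bar z_1,z_2)=\phi_2(z_1)^\dagger\,[(\Omega^{(1,2)}_2)^\dagger,\Pi_l]\,(\tilde H^{(1,2)})^{-1}\tilde\phi^{(1,2)}_1(z_2),
\end{align*}
and no divergent untruncated kernel is ever produced, since the surviving object is controlled by a commutator with $\Pi_l$ and hence lives in a finite window.

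For (iii): the $(k,k')$ entry of $[(\Omega^{(1,2)}_2)^\dagger,\Pi_l]$ equals $((\Omega^{(1,2)}_2)^\dagger)_{k,k'}$ when $k\geq l>k'$, its negative when $k<l\leq k'$, and $0$ otherwise. By the banded-connectors proposition $\Omega^{(1,2)}_2$ has $2\underline N_{\mathfrak C}$ superdiagonals and $2\underline N_\Gamma$ subdiagonals, so these two regions are exactly the windows $\{l-2\underline N_{\mathfrak C},\dots,l-1\}\times\{l,\dots,l+2\underline N_{\mathfrak C}-1\}$ and $\{l,\dots,l+2\underline N_\Gamma-1\}\times\{l-2\underline N_\Gamma,\dots,l-1\}$; the hypothesis $l\geq2\max(\underline N_{\mathfrak C},\underline N_\Gamma)$ is precisely what keeps all these indices nonnegative, so the edge of the semi-infinite array plays no role. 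Reading $\phi_2(z_1)^\dagger(\cdots)(\tilde H^{(1,2)})^{-1}\tilde\phi^{(1,2)}_1(z_2)$ as a product with $\tilde\phi^{(1,2)}_1(z_2)(\tilde H^{(1,2)})^{-1}$ in the row slot and $\overline{\phi_2(z_1)}$ in the column slot, the two windows fill in exactly the blocks $\overline{\mathfrak C^{(1,2)}_{2,l}}$ and $-\overline{\Gamma^{(1,2)}_{2,l}}$ — their triangularity being an immediate reading of the bandwidth bounds — which is \eqref{GerKerNor1}. Formula \eqref{GerKerNor2} follows the same template, starting instead from $L_{\mathfrak C}^{(1)}(z_2)\tilde K^{(2,1),[l]}$ and using the connection formulas $\Omega^{(2,1)}_1\phi_1(z)=L_{\mathfrak C}^{(1)}(z)\tilde\phi^{(2,1)}_1(z)$, $\Omega^{(2,1)}_2\tilde\phi^{(2,1)}_2(z)=L_\Gamma^{(2)}(z)\phi_2(z)$ together with the connector identity $\tilde H^{(2,1)}(\Omega^{(2,1)}_2)^\dagger=\Omega^{(2,1)}_1 H$; there it is the unconjugated commutator $[\Omega^{(2,1)}_1,\Pi_l]$ that appears, which is why no complex conjugations decorate $\mathfrak C^{(2,1)}_{1,l}$ and $\Gamma^{(2,1)}_{1,l}$. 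The only delicate point — and the step I would be most careful with — is the index bookkeeping in (iii): matching each off-diagonal band of the connector to the correct block of the $(2\underline N_{\mathfrak C}+2\underline N_\Gamma)$-square matrix with the right ranges and signs.
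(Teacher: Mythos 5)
Your argument is correct and is essentially the paper's own proof in different packaging: the paper writes the two connector identities in truncated block form $\big[\,\cdot\,\big]^{[l]},\big[\,\cdot\,\big]^{[l,\geq l]},\big[\,\cdot\,\big]^{[\geq l,l]}$ and subtracts them, which is algebraically the same as your split $(\Omega^{(1,2)}_2)^\dagger\Pi_l=\Pi_l(\Omega^{(1,2)}_2)^\dagger+[(\Omega^{(1,2)}_2)^\dagger,\Pi_l]$, since the commutator is exactly the anti-diagonal pair of blocks $\pm\big[\,\cdot\,\big]^{[l,\geq l]},\mp\big[\,\cdot\,\big]^{[\geq l,l]}$. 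The ingredients (Proposition~\ref{Geronimus Connection Laurent}, the connector identities, and the bandedness of the connectors) and the final window bookkeeping coincide with the paper's.
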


\begin{pro}\label{Geronimus Connection mixed}
	For $l\geq 2\max(\underline{N}_{\mathfrak C},\underline{N}_{\Gamma})$, under Geronimus--Uvarov transformations the mixed Christoffel--Darboux kernels satisfy the  connection formulas
	\begin{align}
\label{mix CF1}	
\begin{multlined}[t][0.9\textwidth]
L_\Gamma^{(1)}(\bar x_1)\tilde K_{C_2}^{(1,2),[l]}(\bar x_1,x_2)-	L_\Gamma^{(1)}(x_2)K_{C_2}^{[l]}(\bar x_1,x_2)-	
\delta L_\Gamma^{(1)}(\bar x_1,x_2)
\\= {\Big[\tilde \phi^{(1,2)}_{1,l-2\underline{N}_{\mathfrak C}}(x_2)({\tilde H}_{l-2\underline{N}_{\mathfrak C}}^{(1)})^{-1},\dots,
	\tilde \phi^{(1,2)}_{1,l+2\underline{N}_{\Gamma}-1}(x_2)({\tilde H}_{l+2\underline{N}_{\Gamma}-1}^{(1)})^{-1}\Big]}
\begin{bmatrix}
0_{2\underline{N}_{\mathfrak C}\times 2\underline{N}_{\Gamma}} & \overline{\mathfrak  C^{(1,2)}_{2,l}}\\
-\overline{\Gamma^{(1,2)}_{2,l} }& 0_{2\underline{N}_{\Gamma}\times 2\underline{N}_{\mathfrak C}}
\end{bmatrix}
\begin{bmatrix}
\overline{ C_{2,l-2\underline{N}_{\Gamma}}(x_1)}\\ \vdots\\  \overline{
	C_{2,l+2\underline{N}_{\mathfrak C}-1}(x_1)}
\end{bmatrix},
\end{multlined}	
\end{align}
\begin{align}
\label{mix CF2}	
\begin{multlined}[t][0.9\textwidth]
L_{\mathfrak C}^{(1)}(x_2)\tilde K_{C_2}^{(2,1),[l]}(\bar x_1,x_2)-\prodint{\frac{L_{\mathfrak C}^{(1)}(z_1)}{\bar x_1-z_1},\overline{K^{[l]}(\bar z_2,x_2)}}_{u}
\\=\Big[\overline{\tilde  C^{(2,1)}_{2,l-2\underline{N}_{\mathfrak C}}(x_1)}(\tilde  H^{(2,1)}_{l-2\underline{N}_{\mathfrak C}})^{-1},\dots, \overline{\tilde  C^{(2,1)}_{2,l+2\underline{N}_{\Gamma}-1}(x_1)}(\tilde  H^{(2,1)}_{l+2\underline{N}_{\Gamma}-1})^{-1}\Big]
\begin{bmatrix}
0_{2\underline{N}_{\mathfrak C}\times2\underline{N}_{\Gamma}} &
\mathfrak  C^{(2,1)}_{1,l}\\
-\Gamma^{(2,1)}_{1,l} &0_{2\underline{N}_{\Gamma}\times2\underline{N}_{\mathfrak C}}
\end{bmatrix}
\begin{bmatrix}
\phi_{1,l-2\underline{N}_{\Gamma}}(x_2)\\\vdots\\\phi_{1,l+2\underline{N}_{\mathfrak C}-1}(x_2)
\end{bmatrix}
\end{multlined}
\end{align}		
\begin{align}	\label{mix FC1}	
\begin{multlined}[t][0.9\textwidth]
\overline{L_{\mathfrak C}^{(2)}(x_1)}\tilde K_{C_1}^{(1,2),[l]}(\bar x_1,x_2)-\prodint{K^{[l]}(\bar x_1,z_1),\frac{L_{\mathfrak C}^{(2)}(z_2)}{\bar x_2-z_2}}_{u}\\
=\Big[\tilde  C^{(1,2)}_{1,l-2\underline{N}_{\mathfrak C}}(x_2)({\tilde H}^{(1)}_{l-2\underline{N}_{\mathfrak C}})^{-1},\dots,
	\tilde  C^{(1,2)}_{1,l+2\underline{N}_{\Gamma}-1}(x_2)({\tilde H}^{(1)}_{l+2\underline{N}_{\Gamma}-1})^{-1}\Big]
\begin{bmatrix}
0_{2\underline{N}_{\mathfrak C}\times2\underline{N}_{\Gamma}} &\overline{\mathfrak  C^{(1,2)}_{2,l}}\\
-\overline{\Gamma^{(1,2)}_{2,l} }&0_{2\underline{N}_{\Gamma}\times2\underline{N}_{\mathfrak C}}
\end{bmatrix}
\begin{bmatrix}
\overline{ \phi_{2,l-2\underline{N}_{\Gamma}}(x_1)}\\ \vdots\\  \overline{
	\phi_{2,l+2\underline{N}_{\mathfrak C}-1}(x_1)}
\end{bmatrix},
\end{multlined}
\end{align}
\begin{align}\label{mix FC2}	
\begin{multlined}[t][0.9\textwidth]
 \overline{L_\Gamma^{(2)}}(x_2)\tilde K_{C_1}^{(2,1),[l]}(\bar x_1,x_2)-
 \overline{L_\Gamma^{(2)}}(\bar x_1) K_{C_1}^{[l]}(\bar x_1,x_2)
-	\delta \overline{L_\Gamma^{(2)}}(\bar x_1,x_2)\\
=\Big[\overline{\tilde \phi^{(2,1)}_{2,l-2\underline{N}_{\mathfrak C}}(x_1)}(\tilde  H^{(2,1)}_{l-2\underline{N}_{\mathfrak C}})^{-1},\dots, \overline{\tilde \phi^{(2,1)}_{2,l+2\underline{N}_{\Gamma}-1}(x_1)}(\tilde  H^{(2,1)}_{l+2\underline{N}_{\Gamma}-1})^{-1}\Big]
\begin{bmatrix}
0_{2\underline{N}_{\mathfrak C}\times 2\underline{N}_{\Gamma}}&
\mathfrak  C^{(2,1)}_{1,l}\\
-\Gamma^{(2,1)}_{1,l}&0_{2\underline{N}_{\Gamma}\times 2\underline{N}_{\mathfrak C}}
\end{bmatrix}
\begin{bmatrix}
C_{1,l-2\underline{N}_{\Gamma}}(x_2)\\\vdots\\C_{1,l+2\underline{N}_{\mathfrak C}-1}(x_2)
\end{bmatrix}.
\end{multlined}
\end{align}
\end{pro}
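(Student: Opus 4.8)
The plan is to carry out the second‑kind‑function analogue of the proof of Proposition~\ref{Geronimus Connection CD}. First I would write each mixed kernel through its bilinear representation, $K_{C_2}^{[l]}(\bar x_1,x_2)=[C_2(x_1)^\dagger]^{[l]}(H^{-1})^{[l]}[\phi_1(x_2)]^{[l]}$ and $K_{C_1}^{[l]}(\bar x_1,x_2)=[\phi_2(x_1)^\dagger]^{[l]}(H^{-1})^{[l]}[C_1(x_2)]^{[l]}$ together with their tilded counterparts, and then substitute the connection formulas of Proposition~\ref{Geronimus Connection Laurent} on the polynomial factor and those of Proposition~\ref{Geronimus Connection Cauchy} on the Cauchy factor, using the norm identities $\Omega_1^{(1,2)}\tilde H^{(1,2)}=H(\Omega_2^{(1,2)})^\dagger$ and $\tilde H^{(2,1)}(\Omega_2^{(2,1)})^\dagger=\Omega_1^{(2,1)}H$ to pass between the $H$-- and $\tilde H$--normalizations. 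For \eqref{mix CF1} one multiplies $\tilde K_{C_2}^{(1,2),[l]}(\bar x_1,x_2)$ by $L_\Gamma^{(1)}(\bar x_1)$ and feeds \eqref{CC12} into the left row vector, while $L_\Gamma^{(1)}(x_2)\phi_1(x_2)=\Omega_1^{(1,2)}\tilde\phi_1^{(1,2)}(x_2)$ handles the $K_{C_2}^{[l]}$ term; \eqref{mix FC2} is treated symmetrically through \eqref{CC21}. For \eqref{mix CF2} and \eqref{mix FC1} one uses instead the \emph{homogeneous} formulas \eqref{CC22} and \eqref{CC11}, which is precisely why a weighted Cauchy pairing $\prodint{\frac{L_{\mathfrak C}^{(1)}(z_1)}{\bar x_1-z_1},\overline{K^{[l]}(\bar z_2,x_2)}}_u$, respectively $\prodint{K^{[l]}(\bar x_1,z_1),\frac{L_{\mathfrak C}^{(2)}(z_2)}{\bar x_2-z_2}}_u$, survives on the left‑hand side rather than a term of the form $L_{\mathfrak C}\cdot K_C$.

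After these substitutions the two quantities one must compare share the \emph{same} semi‑infinite bulk, built out of $H^{-1}$ and $\Omega_1^{(1,2)}$, so the left‑hand sides collapse to the difference between the truncation of such a product and the product of the truncations. Since the connectors $\Omega_1^{(1,2)},\Omega_2^{(1,2)}$ and $\Omega_1^{(2,1)},\Omega_2^{(2,1)}$ (and the relevant adjoints) are banded, with at most $2\underline N_\Gamma$ superdiagonals and $2\underline N_{\mathfrak C}$ subdiagonals, or the reverse, this mismatch is supported on the windows $l-2\underline N_{\mathfrak C},\dots,l+2\underline N_\Gamma-1$ and $l-2\underline N_\Gamma,\dots,l+2\underline N_{\mathfrak C}-1$ straddling $l$; the hypothesis $l\geq 2\max(\underline N_{\mathfrak C},\underline N_\Gamma)$ is exactly what keeps both windows inside $\{0,1,2,\dots\}$. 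One then reads off the surviving entries of the connectors in these windows: the ``$\mathfrak C$'' corner comes from the rows just below $l$ paired with columns $\geq l$ and, after the $H\mapsto\tilde H$ conversion, reproduces $\overline{\mathfrak C^{(1,2)}_{2,l}}$ (resp.\ $\mathfrak C^{(2,1)}_{1,l}$), while the ``$\Gamma$'' corner comes from the rows $\geq l$ paired with columns just below $l$ and reproduces $\overline{\Gamma^{(1,2)}_{2,l}}$ (resp.\ $\Gamma^{(2,1)}_{1,l}$), the relative minus sign being produced by the subtraction; these come contracted with the jets of $\tilde\phi_1^{(1,2)}$ and $C_2$ (resp.\ $\tilde C_2^{(2,1)},\phi_1$, $\dots$) exactly as displayed.

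Finally, the scalar terms $\delta L_\Gamma^{(1)}(\bar x_1,x_2)$ in \eqref{mix CF1} and $\delta\overline{L_\Gamma^{(2)}}(\bar x_1,x_2)$ in \eqref{mix FC2} are the footprint of the inhomogeneities of \eqref{CC12}, \eqref{CC21}: the row vector $\prodint{\delta L_\Gamma^{(1)}(z_1,\bar x_1),(\tilde\phi^{(1,2)}_2(z_2))^\top}_{\tilde u^{(1,2)}}$, once contracted with $(\tilde H^{(1,2)})^{-1}$ and the truncated jet $[\tilde\phi_1^{(1,2)}(x_2)]^{[l]}$, loses all components of index $\geq 2\underline N_\Gamma$ by Corollary~\ref{ortogonalidad}, so only $\tilde K^{(1,2),[2\underline N_\Gamma]}(\bar z_2,x_2)$ is effectively seen; and since by Proposition~\ref{simetricos} the Laurent polynomial $z_1\mapsto\delta L_\Gamma^{(1)}(z_1,\bar x_1)$ lies in $\Lambda_{2\underline N_\Gamma}$, Corollary~\ref{CDproyeccion} returns it unchanged, i.e.\ as $\delta L_\Gamma^{(1)}(x_2,\bar x_1)=\delta L_\Gamma^{(1)}(\bar x_1,x_2)$ by the symmetry of $\delta L_\Gamma^{(1)}$ (Proposition~\ref{simetricos}); for \eqref{mix CF2} and \eqref{mix FC1} the formulas \eqref{CC11}, \eqref{CC22} are homogeneous and no such term appears. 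I expect the main obstacle to be the bookkeeping of the second step: pinning down exactly which rows and columns of the banded connectors survive the truncation and checking, entry by entry, that after converting the $H$-- into $\tilde H$--normalizations they assemble into the triangular blocks $\Gamma^{(1,2)}_{2,l},\mathfrak C^{(1,2)}_{2,l}$ and $\Gamma^{(2,1)}_{1,l},\mathfrak C^{(2,1)}_{1,l}$ with all complex conjugations and the relative sign in the right places — and it is precisely there that the threshold $l\geq 2\max(\underline N_{\mathfrak C},\underline N_\Gamma)$ gets used.
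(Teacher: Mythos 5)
Your proposal follows essentially the same route as the paper's own proof: it pairs the inhomogeneous connection formulas \eqref{CC12}, \eqref{CC21} (respectively the homogeneous ones \eqref{CC11}, \eqref{CC22}) with the polynomial connection formulas and the norm relations, reduces the left-hand sides to the mismatch between truncating a banded product and multiplying the truncations, and recovers the $\delta L_\Gamma$ terms by projecting $\delta L_\Gamma\in\Lambda_{2\underline N_\Gamma}\subset\Lambda_l$ through the Christoffel--Darboux kernel via Corollary \ref{CDproyeccion}. The identification of which formulas produce the weighted Cauchy pairings versus the $L\cdot K_C$ terms, and of the role of the threshold $l\geq 2\max(\underline N_{\mathfrak C},\underline N_\Gamma)$, matches the paper's argument, so only the entry-by-entry bookkeeping you flag remains to be written out.
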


\subsection{Christoffel  formulas for Geronimus--Uvarov transformations}
The Geronimus--Uvarov perturbed  second kind functions are
\begin{align*}
\overline{	\tilde C_{2,k}^{(1,2)}(z)}
&=\prodint{\frac{1}{\bar z-z_1}, \tilde \phi^{(1,2)}_{2,k}(z_2)}_{ \frac{\overline{(L_{\mathfrak C}^{(2)}(z_2))}}{L_\Gamma^{(1)}(z_1)}u}+\sum_{i=1}^{d^{(1)}}\sum_{l=0}^{m^{(1)}_{i}-1}
\frac{1}{l!}\frac{\d^l}{\d \zeta^l}
\Big(
\frac{1}{ \bar z-\zeta}\Big)
\bigg|_{\zeta= \zeta^{(1)}_{i}}\overline{\prodint{L^{(2)}_{\mathfrak C}\xi^{(1)}_{i,l},	\tilde\phi^{(1,2)}_{2,k}}},\end{align*}
for $ \bar z\in\mathbb C\setminus {\operatorname{supp}_1(u)\cup\sigma(L_\Gamma^{(1)})}$ and
\begin{align*}
\tilde C_{1,k}^{(2,1)}(z)
&=\prodint{\tilde\phi^{(2,1)}_{1,k}(z_1),
	\frac{1}{\bar z-z_2}}_{\frac{L_{\mathfrak C}^{(1)}(z_1)}{\overline{(L_\Gamma^{(2)}(z_2))}}u}+\sum_{i=1}^{d^{(2)}}\sum_{l=0}^{m^{(2)}_{i}-1}\prodint{L^{(1)}_{\mathfrak C}\xi^{(2)}_{i,l},	\tilde\phi^{(2,1)}_{1,k}}
\frac{1}{l!}\overline{\frac{\d^l}{\d \zeta^l}
	\Big(
	\frac{1}{ \bar z-\zeta}\Big)
	\bigg|_{\zeta= \zeta^{(2)}_{i}}},
\end{align*}
for $z\in\mathbb C\setminus\overline{\operatorname{supp}_2(u)\cup\sigma(L_\Gamma^{(2)})}$.
\begin{defi}
	\begin{enumerate}
		\item  For  $a\in\{1,2\}$ we define
		\begin{align*}
		\prodint{\xi^{(a)}_i,
			f}&:=\Big[\prodint{\xi^{(a)}_{i,1},
			f},\dots,\prodint{\xi^{(a)}_{i,m_1-1},
			f}\Big], &\prodint{\xi^{(a)},
			f}&:=\Big[\prodint{\xi^{(a)}_{1},	f
		},\dots,\prodint{\xi^{(a)}_{d}, f}
		\Big].
		\end{align*}
		\item The expression   $L(z)=L_{N^+} z^{-N^-}\prod\limits_{j=1}^{d}(z-\zeta_j)^{m_j}$ with  $m_1+\dots+m_d=N^++N^-$, allows us to introduce
		\begin{align*}
		L_{[i]}(z):=
		L_{N^+} z^{-N^-}\prod\limits_{\substack{j=1\\j\neq i}}^{d}(z-\zeta_j)^{m_j}.
		\end{align*}
	\end{enumerate}
\end{defi}

Relations \eqref{CC21k>} and  \eqref{CC12k>} motivate us to consider
\begin{align*}
\overline{	 \overline{L_\Gamma^{(1)}}(z)\tilde C_{2,k}^{(1,2)}(z)}
&=\prodint{\frac{L_\Gamma^{(1)}(\bar z)}{\bar z-z_1}, \tilde \phi^{(1,2)}_{2,k}(z_2)}_{ \frac{\overline{(L_{\mathfrak C}^{(2)}(z_2))}}{L_\Gamma^{(1)}(z_1)}u}+\sum_{i=1}^{d^{(1)}}
\overline{\prodint{L^{(2)}_{\mathfrak C}\xi^{(1)}_{i},	\tilde\phi^{(1,2)}_{2,k}}}
L^{(1)}_{\Gamma,[i]}(\bar z)\begin{bmatrix}
(\bar z- \zeta^{(1)}_{i})^{m^{(1)}_{i}-1}\\ (\bar z- \zeta^{(1)}_{i})^{m^{(1)}_{i}-2}\\\vdots\\1
\end{bmatrix},\\
\overline{ L_\Gamma^{(2)}}(z)\tilde C_{1,k}^{(2,1)}(z)&=
\prodint{\tilde\phi^{(2,1)}_{1,k}(z_1),\frac{L_\Gamma^{(2)}(\bar z)}{\bar z-z_2}}_{\frac{L^{(1)}_{\mathfrak C}(z_1)}{\overline{(L_\Gamma^{(2)}(z_2))}}u}
+\sum_{i=1}^{d^{(2)}}\prodint{L^{(1)}_{\mathfrak C}\xi^{(2)}_{i},	\tilde\phi^{(2,1)}_{1,k}}
 \overline{L^{(2)}_{\Gamma,[i]}}(z)
\begin{bmatrix}
(z-\bar \zeta^{(2)}_{i})^{m^{(2)}_{i}-1}\\ (z-\bar \zeta^{(2)}_{i})^{m^{(2)}_{i}-2}\\\vdots\\1
\end{bmatrix}.
\end{align*}
When the  evaluation of the spectral jets
$\mathcal J^{\overline{ L_\Gamma^{(1)}}}_{\overline{ L_\Gamma^{(1)}}\tilde C_{2,k}^{(1,2)}}$ y $\mathcal J^{\overline{ L_\Gamma^{(2)}}}_{\overline{ L_\Gamma^{(2)}}\tilde C_{1,k}^{(2,1)}}$ is required
we perform  it by taking appropriated  limits  ---and in this manner we take care of the fact that the perturbing polynomial zeros lay  on the border of the perturbed functional support.

\begin{defi}		For $a\in\{1,2\}$, $j\in\{1,\dots,d^{(a)}\}$, $k\in\{0,\dots,m^{(a)}_{j}-1\}$, we define
	\begin{align*}
	\ell_{[j],k}^{(a)}&:=	\frac{1}{k!}
	\frac{\d^{k} \overline{L^{(a)}_{\Gamma,[j]}}(z)}{\d z^{k}}\bigg|_{z=\bar \zeta^{(a)}_{j}}.
	\end{align*}
We also introduce
	\begin{align*}
	\mathcal L^{(a)}_{[j]}&:=	\begin{bmatrix}0 &0 & 0& & \ell^{(a)}_{[j],0}\\
	\vdots&\vdots &\vdots & \iddots&\vdots\\ 0&0 & \ell^{(a)}_{[j],0}&\dots &\ell^{(a)}_{[j],m^{(a)}_{j}-3}\\	
	0&	 \ell^{(a)}_{[j],0} &\ell^{(a)}_{[j],1} & \dots&\ell^{(a)}_{[j],m^{(a)}_{j}-2}\\
 \ell^{(a)}_{[j],0}	&\ell^{(a)}_{[j],1} &\ell^{(a)}_{[j],2} & \dots&\ell^{(a)}_{[j],m^{(a)}_{j}-1}
	\end{bmatrix}, &\mathcal L^{(a)}&:=\diag(\mathcal L^{(a)}_{[1]},\dots,\mathcal L^{(a)}_{[d^{(a)}]}).
	\end{align*}
\end{defi}

\begin{pro}\label{pro.chorros}
	The spectral jets fulfill
	\begin{align}
	\label{caleGer}
	\mathcal J^{ \overline{L_\Gamma^{(2)}}}_{\overline{L_\Gamma^{(2)}}\tilde C^{(2,1)}_{{1,k}}}&=\prodint{L^{(1)}_{\mathfrak C}\xi^{(2)},\tilde\phi^{(2,1)}_{1,k}}\mathcal L^{(2)},\\\label{chorro (1)}
	\mathcal J^{\overline{L_\Gamma^{(1)}}}_{\overline{L_\Gamma^{(1)}}\tilde C_{2,k}^{(1,2)}}&=	{\prodint{L^{(2)}_{\mathfrak C}\xi^{(1)},	\tilde\phi^{(1,2)}_{2,k}}\mathcal L^{(1)}}.
	\end{align}
\end{pro}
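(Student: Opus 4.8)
The plan is to obtain the spectral jet identities \eqref{caleGer} and \eqref{chorro (1)} by evaluating, along the spectra of $\overline{L_\Gamma^{(2)}}$ and $\overline{L_\Gamma^{(1)}}$ respectively, the explicit formulas displayed just above for $\overline{L_\Gamma^{(2)}}(z)\tilde C_{1,k}^{(2,1)}(z)$ and $\overline{\overline{L_\Gamma^{(1)}}(z)\tilde C_{2,k}^{(1,2)}(z)}$. The key observation is that in each of these formulas the first term --- the sesquilinear pairing $\prodint{\tilde\phi^{(2,1)}_{1,k}(z_1),\frac{L_\Gamma^{(2)}(\bar z)}{\bar z-z_2}}_{\frac{L^{(1)}_{\mathfrak C}(z_1)}{\overline{(L_\Gamma^{(2)}(z_2))}}u}$ in \eqref{caleGer}, and the analogous one in \eqref{chorro (1)} --- is, after clearing the denominator $\overline{L_\Gamma^{(2)}(z_2)}$ (resp.\ $L_\Gamma^{(1)}(z_1)$), a Laurent polynomial pairing that \emph{vanishes to the full order $m^{(a)}_i$ at each zero $\zeta^{(a)}_i$}, because $L_\Gamma^{(2)}(\bar z)$ (resp.\ $L_\Gamma^{(1)}(\bar z)$) has there a zero of that order while the Cauchy kernel $\frac{1}{\bar z - z_2}$ stays regular for $z_2$ in the support. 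Hence that term contributes nothing to the spectral jet along $\overline{L_\Gamma^{(a)}}$: its value and all derivatives up to order $m^{(a)}_i-1$ at $\bar\zeta^{(a)}_i$ are zero. The reason the limiting prescription announced in the paragraph before the Definition of $\ell^{(a)}_{[j],k}$ is needed is precisely that $\bar\zeta^{(a)}_i$ lies on the boundary of the perturbed support, so the jet has to be read as a limit from outside; once that is granted, regularity of the Cauchy term is legitimate.

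The remaining contribution to the jet is the finite sum $\sum_i \prodint{L^{(1)}_{\mathfrak C}\xi^{(2)}_{i},\tilde\phi^{(2,1)}_{1,k}}\,\overline{L^{(2)}_{\Gamma,[i]}}(z)\,[(z-\bar\zeta^{(2)}_i)^{m^{(2)}_i-1},\dots,1]^\top$, and symmetrically for the $(1,2)$ case. So the second step is the purely algebraic computation of the spectral jet of the single vector-valued function $z\mapsto \overline{L^{(2)}_{\Gamma,[i]}}(z)\,(z-\bar\zeta^{(2)}_i)^{m^{(2)}_i-1-r}$, $r=0,\dots,m^{(2)}_i-1$, along $\overline{L_\Gamma^{(2)}}$. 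Because $\overline{L_\Gamma^{(2)}}$ vanishes at $\bar\zeta^{(2)}_j$ to order $m^{(2)}_j$ for every $j$, the jet at a node $\bar\zeta^{(2)}_j$ with $j\neq i$ is identically zero (every factor $\overline{L^{(2)}_{\Gamma,[i]}}$ still contains $(z-\bar\zeta^{(2)}_j)^{m^{(2)}_j}$), so only the block at $\bar\zeta^{(2)}_i$ survives; there, since $(z-\bar\zeta^{(2)}_i)^{m^{(2)}_i-1-r}$ has a zero of order $m^{(2)}_i-1-r$, the first $m^{(2)}_i-1-r$ entries of the jet vanish and the remaining entries are, by the Leibniz rule, exactly the Taylor coefficients $\ell^{(2)}_{[i],0},\ell^{(2)}_{[i],1},\dots$ of $\overline{L^{(2)}_{\Gamma,[i]}}$ at $\bar\zeta^{(2)}_i$, read off with the $\tfrac1{k!}$ normalization built into the definition of the spectral jet. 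Assembling these blocks over $r=0,\dots,m^{(2)}_i-1$ produces precisely the lower-triangular Hankel-type matrix $\mathcal L^{(2)}_{[i]}$, and over $i=1,\dots,d^{(2)}$ the block-diagonal $\mathcal L^{(2)}=\diag(\mathcal L^{(2)}_{[1]},\dots,\mathcal L^{(2)}_{[d^{(2)}]})$; pairing with the row $\prodint{L^{(1)}_{\mathfrak C}\xi^{(2)},\tilde\phi^{(2,1)}_{1,k}}=[\prodint{L^{(1)}_{\mathfrak C}\xi^{(2)}_1,\tilde\phi^{(2,1)}_{1,k}},\dots,\prodint{L^{(1)}_{\mathfrak C}\xi^{(2)}_{d^{(2)}},\tilde\phi^{(2,1)}_{1,k}}]$ gives \eqref{caleGer}. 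The identity \eqref{chorro (1)} follows by the same argument applied to $\overline{\overline{L_\Gamma^{(1)}}(z)\tilde C_{2,k}^{(1,2)}(z)}$, with the roles of the two variables and of the conjugations interchanged; one uses $\sigma(L_\Gamma^{(1)})\cap\operatorname{supp}_1 u=\varnothing$ to guarantee regularity of the corresponding Cauchy term.

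I expect the one genuine subtlety --- the main obstacle --- to be the bookkeeping at the boundary: verifying rigorously that the Cauchy-kernel term really contributes a zero jet. This requires that for $z$ approaching $\bar\zeta^{(2)}_i$ the expression $\prodint{\tilde\phi^{(2,1)}_{1,k}(z_1),\frac{L_\Gamma^{(2)}(\bar z)}{\bar z-z_2}}$ depends analytically on $\bar z$ near $\bar\zeta^{(2)}_i$ --- which it does, since after expanding $L_\Gamma^{(2)}(\bar z)$ and using $\delta L_\Gamma^{(2)}$ of Proposition \ref{simetricos} we may write $\frac{L_\Gamma^{(2)}(\bar z)}{\bar z - z_2}=\frac{L_\Gamma^{(2)}(z_2)}{\bar z-z_2}+\delta L_\Gamma^{(2)}(\bar z,z_2)$, the second summand being a genuine Laurent polynomial in $\bar z$ (hence contributing an entire function of $\bar z$ after pairing) and the first, at $\bar z\to\bar\zeta^{(2)}_i$, tending to $\frac{L_\Gamma^{(2)}(\zeta^{(2)}_i)}{\cdots}=0$ with the correct order. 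This is precisely the content of the limiting prescription, and once it is spelled out the rest is the routine Leibniz-rule identification of $\mathcal L^{(a)}_{[i]}$ described above.
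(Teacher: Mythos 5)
Your proposal follows essentially the same route as the paper's proof: the Cauchy-pairing term is shown to have vanishing spectral jet because $\overline{L_\Gamma^{(a)}}$ vanishes to order $m_i^{(a)}$ at $\bar\zeta_i^{(a)}$ while the Cauchy kernel stays regular there (thanks to $\overline{\sigma(L_\Gamma^{(a)})}\cap\operatorname{supp}u=\varnothing$), and the mass term's jet is then computed by the Leibniz rule, with the blocks at $\bar\zeta_j^{(a)}$, $j\neq i$, vanishing and the diagonal blocks assembling into $\mathcal L^{(a)}_{[i]}$ --- exactly the two steps of the paper's argument. One minor slip in your closing paragraph: in the decomposition $\frac{L_\Gamma^{(2)}(\bar z)}{\bar z-z_2}=\frac{L_\Gamma^{(2)}(z_2)}{\bar z-z_2}+\delta L_\Gamma^{(2)}(\bar z,z_2)$ neither summand individually vanishes as $\bar z\to\zeta^{(2)}_i$ (only their sum does, to order $m^{(2)}_i$), but this does not damage the proof since your first paragraph already establishes the vanishing of the jet directly from the order-$m^{(2)}_i$ zero of the numerator and the regularity of the Cauchy factor.
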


\begin{teo}[Christoffel--Geronimus--Uvarov formulas]\label{Christoffel-Geronimus formulas}

	Let's assume that all perturbing  Laurent polynomials are prepared and consider the determinants
\begin{align*}
\tilde \tau_l^{(2,1)}&:=\begin{vsmallmatrix}
\mathcal J^{\overline{L_\Gamma^{(2)}}}_{ C_{1,l-2 N_\Gamma}}-\prodint{\xi^{(2)},\phi_{1,l-2 N_\Gamma}}\mathcal L^{(2)} & \mathcal J^{{L_{\mathfrak C}^{(1)}}}_{ \phi_{1,l-2 N_\Gamma}}\\\vdots  &\vdots \\
\mathcal J^{\overline{L_\Gamma^{(2)}}}_{ C_{1,l+2 N_{\mathfrak C}-1}}-\prodint{\xi^{(2)},\phi_{1,l+2 N_{\mathfrak C}-1}}\mathcal L^{(2)}& 	\mathcal J^{{L_{\mathfrak C}^{(1)}}}_{ \phi_{1,l+2 N_{\mathfrak C}-1}}
\end{vsmallmatrix}, &\tilde \tau^{(1,2)}_l&:=\begin{vsmallmatrix}
	\mathcal J^{\overline{L_\Gamma^{(1)}}}_{ C_{2,l-2 N_\Gamma}}-\prodint{\xi^{(1)},\phi_{2,l-2 N_\Gamma}}\mathcal L^{(1)} & \mathcal J^{{L_{\mathfrak C}^{(2)}}}_{ \phi_{2,l-2 N_\Gamma}}\\\vdots  &\vdots \\
	\mathcal J^{\overline{L_\Gamma^{(1)}}}_{ C_{2,l+2 N_{\mathfrak C}-1}}-\prodint{\xi^{(1)},\phi_{2,l+2 N_{\mathfrak C}-1}}\mathcal L^{(1)}& 	\mathcal J^{{L_{\mathfrak C}^{(2)}}}_{ \phi_{2,l+2 N_{\mathfrak C}-1}}
	\end{vsmallmatrix}.
\end{align*}

Then, for $l\geq 2\max(N_\Gamma,N_{\mathfrak C})$ and  $\tilde \tau_l^{(2,1)}\neq 0$
	we have the following determinantal  formulas
	\begin{gather}\label{Ger12}
\tilde\phi^{(2,1)}_{1,l}(z)
=
\frac{	L^{(1)}_{\mathfrak C,(-1)^{l} N_{\mathfrak C}}}{L_{\mathfrak C}^{(1)}(z)}\frac{1}{\tilde\tau^{(2,1)}_l}\begin{vmatrix}
\mathcal J^{\overline{L_\Gamma^{(2)}}}_{ C_{1,l-2 N_\Gamma}}-\prodint{\xi^{(2)},\phi_{1,l-2 N_\Gamma}}\mathcal L^{(2)} & \mathcal J^{{L_{\mathfrak C}^{(1)}}}_{ \phi_{1,l-2 N_\Gamma}}& \phi_{1,l-2 N_\Gamma}(z)\\\vdots  &\vdots&\vdots \\
\mathcal J^{\overline{L_\Gamma^{(2)}}}_{ C_{1,l+2 N_{\mathfrak C}}}-\prodint{\xi^{(2)},\phi_{1,l+2 N_{\mathfrak C}}}\mathcal L^{(2)}& 	\mathcal J^{{L_{\mathfrak C}^{(1)}}}_{ \phi_{1,l+2 N_{\mathfrak C}}}&\phi_{1,l+2 N_{\mathfrak C}}(z)
\end{vmatrix},\\
\label{GerH2}
\tilde  H^{(2,1)}_{l}=	 \dfrac{L^{(1)}_{\mathfrak C,(-1)^{l} N_{\mathfrak C}}}{ \overline{L^{(2)}_{\Gamma,(-1)^lN_\Gamma}}}H_{l-2 N_\Gamma}
\frac{\tilde\tau^{(2,1)}_{l+1}}{\tilde\tau^{(2,1)}_l},
\\\label{Ger22}
	\overline{\tilde \phi^{(2,1)}_{2,l}(z)}=-\frac{	\overline{ L_\Gamma^{(2)}( z)}}{\overline{L^{(2)}_{\Gamma,(-1)^lN_\Gamma}}}\frac{H_{l-2 N_\Gamma}}{\tilde\tau^{(2,1)}_l}\begin{vmatrix}
\mathcal J^{ \overline{L_\Gamma^{(2)}}}_{ 	C_{1,l-2 N_\Gamma+1}}-\prodint{\xi^{(2)},\phi_{1,l-2 N_{\Gamma}+1}}\mathcal L^{(2)}&\mathcal J^{L_{\mathfrak C}^{(1)}}_{\phi_{1,l-2 N_\Gamma+1}}\\\vdots&\vdots\\\mathcal J^{ \overline{L_\Gamma^{(2)}}}_{ C_{1,l+2 N_{\mathfrak C}-1}}-\prodint{\xi^{(2)},\phi_{1,l+2 N_{\mathfrak C}-1}}\mathcal L^{(2)}&\mathcal J^{L_{\mathfrak C}^{(1)}}_{\phi_{1,l+2 N_{\mathfrak C}-1}}\\
\mathcal  J^{\overline{L_\Gamma^{(2)}}}_{K_{C_1}^{[l-2N_\Gamma+1]}}(\bar z)-\prodint{(\xi^{(2)})_w,K^{[l-2N_\Gamma+1]}(\bar z,w)}\mathcal L^{(2)}
+\frac{1}{\overline{L_\Gamma^{(2)}( z)} }\mathcal  J^{\overline{L_\Gamma^{(2)}}}_{\delta \overline{L_\Gamma^{(2)}}}(\bar z)&
 \mathcal J^{L_{\mathfrak C}^{(1)}}_{K^{[l-2N_\Gamma+1]}}(\bar z)
\end{vmatrix}
\end{gather}
	where the spectral of the mixed Christoffel--Darboux kernel and of  $\delta  \overline{L_\Gamma^{(2)}}$ are taken with respect to the second variable.
	Whenever  $l\geq 2\max(N_\Gamma, N_{\mathfrak C})$ and $\tilde\tau^{(1,2)}_{l}\neq 0$ the following
	formulas are satisfied
\begin{align}\label{Ger21}
	\tilde\phi^{(1,2)}_{2,l}(z)
&=
\frac{	L^{(2)}_{\mathfrak C,(-1)^{l} N_{\mathfrak C}}}{L_{\mathfrak C}^{(2)}(z)}\frac{1}{\tilde \tau^{(1,2)}_l}\begin{vmatrix}
\mathcal J^{\overline{L_\Gamma^{(1)}}}_{ C_{2,l-2 N_\Gamma}}-\prodint{\xi^{(1)},\phi_{2,l-2 N_\Gamma}}\mathcal L^{(1)} & \mathcal J^{{L_{\mathfrak C}^{(2)}}}_{ \phi_{2,l-2 N_\Gamma}}& \phi_{2,l-2 N_\Gamma}(z)\\\vdots  &\vdots&\vdots \\
\mathcal J^{\overline{L_\Gamma^{(1)}}}_{ C_{2,l+2 N_{\mathfrak C}}}-\prodint{\xi^{(1)},\phi_{2,l+2 N_{\mathfrak C}}}\mathcal L^{(1)}& 	\mathcal J^{{L_{\mathfrak C}^{(2)}}}_{ \phi_{2,l+2 N_{\mathfrak C}}}&\phi_{2,l+2 N_{\mathfrak C}}(z)
\end{vmatrix}
	\end{align}
	\begin{align}\label{GerH1}
	\bar{	\tilde H}^{(1,2)}_{l}&=	 \dfrac{L^{(2)}_{\mathfrak C,(-1)^{l} N_{\mathfrak C}}}{ \overline{L^{(2)}_{\Gamma,(-1)^lN_\Gamma}}}\bar H_{l-2 N_\Gamma}\frac{\tilde \tau^{(1,2)}_{l+1}}{\tilde \tau^{(1,2)}_{l}},
	\end{align}
	\begin{multline}	\label{Ger11}
\tilde \phi^{(1,2)}_{1,l}(z)
	\\=-\frac{{L_\Gamma^{(1)}}(z)	}{L^{(1)}_{\Gamma,(-1)^lN_\Gamma}}\frac{H_{l-2 N_\Gamma}}{\tilde\tau^{(1,2)}_l}
\begin{vmatrix}
\overline{ \mathcal J^{ \overline{L_\Gamma^{(1)}}}_{C_{2,l-2 N_\Gamma+1}}-\prodint{\xi^{(1)},\phi_{2,l-2 N_\Gamma+1}}\mathcal L^{(1)}}&\overline{ \mathcal J^{{L_{\mathfrak C}^{(2)}}}_{\phi_{2,l-2 N_\Gamma+1}}}\\
\vdots &\vdots\\  \overline{
	\mathcal J^{ \overline{L_\Gamma^{(1)}}}_{	C_{2,l+2 N_{\mathfrak C}-1}}-\prodint{\xi^{(1)},\phi_{2,l+2 N_{\mathfrak C}-1}}\mathcal L^{(1)}}&\overline{ \mathcal J^{{L_{\mathfrak C}^{(2)}}}_{\phi_{2,l+2 N_{\mathfrak C}-1}}}\\
\mathcal J^{ \overline{L_\Gamma^{(1)}}}_{K_{C_2}^{[l-2N_\Gamma+1]}}(z)
-\prodint{\overline{(\xi^{(1)})_w},{K^{[l-2N_\Gamma+1]}(\bar w,z)}}
\overline{\mathcal L^{(1)}}
+
\frac{1}{L_\Gamma^{(1)}(z)}
\mathcal J^{ \overline{L_\Gamma^{(1)}}}_{\delta {L_\Gamma^{(1)}}}(z)&\mathcal J^{\overline{L_{\mathfrak C}^{(2)}}}_{K^{[l-2N_\Gamma+1]}}(z)
\end{vmatrix},
\end{multline}
	where the spectral jet of the Christoffel--Darboux kernels and of $\delta L_\Gamma^{(1)}$ are taken with respect to the first variable.
\end{teo}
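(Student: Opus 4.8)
The plan is to derive all four blocks of formulas --- the perturbed Laurent polynomials $\tilde\phi^{(2,1)}_{1,l}$, $\tilde\phi^{(1,2)}_{2,l}$, the diagonal coefficients $\tilde H^{(2,1)}_l$, $\tilde H^{(1,2)}_l$, and the ``second type'' expressions $\tilde\phi^{(2,1)}_{2,l}$, $\tilde\phi^{(1,2)}_{1,l}$ --- by setting up, for each fixed $l\geq 2\max(N_\Gamma,N_{\mathfrak C})$, a square linear system whose unknowns are the entries of the relevant connector block ($\mathfrak C^{(2,1)}_{1,l}$, $\Gamma^{(2,1)}_{1,l}$, etc.) and then solving it by Cramer's rule. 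I would treat the two transformations $\tilde u^{(1,2)}$ and $\tilde u^{(2,1)}$ in parallel, since they are mirror images under the obvious $1\leftrightarrow 2$, $\mathfrak C\leftrightarrow\Gamma$, bar-conjugation symmetry; it suffices to carry out one of them carefully.

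First I would assemble the linear system for $\tilde\phi^{(2,1)}_{1,l}$. The banded structure in the Proposition preceding \ref{Geronimus Connection Laurent} says that $\Omega^{(2,1)}_1$ has $2\underline N_{\mathfrak C}$ nontrivial superdiagonals and $2\underline N_\Gamma$ subdiagonals; with the polynomials prepared we have $\underline N_\Gamma=N_\Gamma$, $\underline N_{\mathfrak C}=N_{\mathfrak C}$. The connection formula $\Omega^{(2,1)}_1\phi_1(z)=L^{(1)}_{\mathfrak C}(z)\tilde\phi^{(2,1)}_1(z)$ expresses $L^{(1)}_{\mathfrak C}(z)\tilde\phi^{(2,1)}_{1,l}(z)$ as a fixed linear combination of $\phi_{1,l-2N_\Gamma}(z),\dots,\phi_{1,l+2N_{\mathfrak C}}(z)$, with the top coefficient known from \eqref{Omegalambda2} to be $L^{(1)}_{\mathfrak C,(-1)^l N_{\mathfrak C}}$. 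The remaining $2N_\Gamma+2N_{\mathfrak C}$ coefficients are pinned down by two families of conditions: (i) the orthogonality/biorthogonality of $\tilde\phi^{(2,1)}_{1,l}$ against powers $(z_2)^j$ under $\tilde u^{(2,1)}$, which by the relation $\prodint{f,L^{(2)}_\Gamma g}_{\tilde u^{(2,1)}}=\prodint{L^{(1)}_{\mathfrak C}f,g}_u$ translates into vanishing of $\prodint{\Omega^{(2,1)}_1\phi_1(z_1),(z_2)^j}_{\!*}$ for the appropriate range of $j$, giving the column headed by $\mathcal J^{L^{(1)}_{\mathfrak C}}_{\phi_{1,\bullet}}$ after one packages the conditions as evaluations of spectral jets along the zeros of $L^{(1)}_{\mathfrak C}$; and (ii) the mass terms, which couple $\tilde\phi^{(2,1)}_{1,l}$ to the second kind functions through the zeros of $L^{(2)}_\Gamma$ --- here one uses \eqref{CC21k>}, rewritten via $\overline{L^{(2)}_\Gamma}(z)\tilde C^{(2,1)}_{1,l}(z)$ and its spectral jet from Proposition \ref{pro.chorros}, so that evaluating jets along $\overline{\sigma(L^{(2)}_\Gamma)}$ produces exactly the rows $\mathcal J^{\overline{L^{(2)}_\Gamma}}_{C_{1,\bullet}}-\prodint{\xi^{(2)},\phi_{1,\bullet}}\mathcal L^{(2)}$. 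Stacking (i) and (ii) gives a square $(2N_\Gamma+2N_{\mathfrak C})\times(2N_\Gamma+2N_{\mathfrak C})$ system whose coefficient matrix is exactly $\tilde\tau^{(2,1)}_l$; Cramer's rule with the extra column $\phi_{1,\bullet}(z)$ yields \eqref{Ger12}, and the assumption $\tilde\tau^{(2,1)}_l\neq 0$ is precisely solvability. The ratio formula \eqref{GerH2} then follows by comparing the $\Omega^{(2,1)}_1\tilde H^{(2,1)}=\tilde H^{(2,1)}(\dots)$-type identity (the Proposition on connectors and $H$) on indices $l$ and $l+1$, i.e. by recognizing the leading coefficient extracted in \eqref{Ger12} as a quotient $\tilde\tau^{(2,1)}_{l+1}/\tilde\tau^{(2,1)}_l$ and tracking the scalar prefactors from \eqref{Omegalambda}--\eqref{Omegalambda2}.

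For the second-type formula \eqref{Ger22} I would instead start from the CD-kernel connection \eqref{GerKerNor2} (and for \eqref{Ger11}, from \eqref{GerKerNor1}) together with the mixed-kernel connections \eqref{mix FC2} resp. \eqref{mix CF1}. The point is that these express $\overline{L^{(2)}_\Gamma(z)}\,\overline{\tilde\phi^{(2,1)}_{2,l}(z)}$ --- via the dual connection formula $\Omega^{(2,1)}_2\tilde\phi^{(2,1)}_2=L^{(2)}_\Gamma\phi_2$ and the identity $\tilde H^{(2,1)}(\Omega^{(2,1)}_2)^\dagger=\Omega^{(2,1)}_1 H$ --- as a linear combination of $\phi_{2}$'s, but one can equally realize the same coefficient vector (the bottom row of $\Gamma^{(2,1)}_{1,l}$, up to the scalar in \eqref{Omegalambda}) from the kernel connection formula evaluated at the spectral jets; bordering the matrix $\tilde\tau^{(2,1)}_l$ with the row built from $\mathcal J_{K^{[l-2N_\Gamma+1]}_{C_1}}$, $\mathcal J_{K^{[l-2N_\Gamma+1]}}$ and the $\delta\overline{L^{(2)}_\Gamma}$ correction term (this last coming from the inhomogeneous $\delta L$-term in \eqref{mix FC2}, which survives because the kernel is only a partial projection) gives \eqref{Ger22} by Cramer. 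Formulas \eqref{Ger21}, \eqref{GerH1}, \eqref{Ger11} are obtained identically under the $(1,2)\leftrightarrow(2,1)$ symmetry, taking complex conjugates where the perturbation acts on $z_2$.

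The main obstacle, I expect, is bookkeeping rather than conceptual: one must verify that the ranges of the spectral-jet conditions exactly fill out a square system of size $2N_\Gamma+2N_{\mathfrak C}$ --- i.e. that the degrees match with no overdetermination --- and that the mass contributions, which enter through the \emph{border} zeros of the perturbed support, can be legitimately written as spectral jets of $\overline{L^{(2)}_\Gamma}\tilde C^{(2,1)}_{1,k}$; this is where the limiting prescription for $\mathcal J^{\overline{L^{(2)}_\Gamma}}_{\overline{L^{(2)}_\Gamma}\tilde C^{(2,1)}_{1,k}}$ and Proposition \ref{pro.chorros} do the real work. A secondary technical point is keeping track of the scalar prefactors $L^{(a)}_{\mathfrak C,(-1)^l N_{\mathfrak C}}$, $L^{(a)}_{\Gamma,(-1)^l N_\Gamma}$, $H_{l-2N_\Gamma}$ and the parity-dependent signs coming from the CMV ordering, so that \eqref{GerH2} and \eqref{GerH1} come out with the stated normalization. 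I would relegate the longer of these verifications to the Appendix, as the authors indicate they do.
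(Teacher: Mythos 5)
Your proposal follows essentially the same route as the paper's proof: for \eqref{Ger12}, \eqref{GerH2}, \eqref{Ger21}, \eqref{GerH1} one assembles a square $(2N_\Gamma+2N_{\mathfrak C})$--system for the nontrivial entries of the connector row from two families of spectral--jet conditions and solves by Cramer's rule (quasideterminants), and for \eqref{Ger22}, \eqref{Ger11} one borders that system with a row built from the mixed and plain kernel connection formulas, Proposition \ref{pro.chorros}, and the $\delta L_\Gamma$ inhomogeneity, exactly as in the Appendix. One caveat: the $2N_{\mathfrak C}$ conditions producing the column $\mathcal J^{L_{\mathfrak C}^{(1)}}_{\phi_{1,\bullet}}$ do \emph{not} come from orthogonality of $\tilde\phi^{(2,1)}_{1,l}$ against powers of $z_2$ (that would yield moment conditions on $u$, not point evaluations); they come from taking the spectral jet of the polynomial identity $\Omega^{(2,1)}_1\phi_1=L_{\mathfrak C}^{(1)}\tilde\phi^{(2,1)}_1$ along $L_{\mathfrak C}^{(1)}$, whose right--hand side vanishes at $\sigma(L_{\mathfrak C}^{(1)})$ for purely algebraic reasons; the orthogonality and mass data enter only through the other column via \eqref{CC21k>} and Proposition \ref{pro.chorros}. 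With that correction, and noting that the matrix in \eqref{Ger22} is $\tilde\tau^{(2,1)}_l$ with its first row deleted and the kernel row appended (the passage from $K^{[l]}$ to $K^{[l-2N_\Gamma+1]}$ being a row operation that leaves the determinant unchanged), your outline matches the published argument.
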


\section{Reductions}

A bivariate linear functional  $u_{z_1,\bar z_2}$ is supported on the diagonal $z_1=z_2$ if
\begin{align*}
\prodint{ f(z_1), g(z_2)}_u=\sum_{0\leq n,m\ll \infty}\prodint{u_z^{(n,m)}, \frac{\partial^nf}{\partial z^n} ( z)\overline{\frac{\partial^{m} g}{\partial z^m} (z)}},
\end{align*}
where $u_z^{(n,m)}$ are univariate linear functionals, i.e., we are dealing with a Sobolev sesquilinear form.
A particularly relevant example, is the  zero order  diagonal case
\begin{align*}
\prodint {f(z),g(z)}_u=\prodint{u_z,f(z)\overline{g(z)}}.
\end{align*}

The  Toeplitz case appears, for  zero order diagonal situations,  when
the support of the liner functional lays in the unit circle, $\operatorname{supp}u\subset \mathbb T$. 
\footnote{For higher orders diagonal situations supported in the unit circle we  can also find Toeplitz matrices. Indeed, that is the case for the following linear functional $\prodint{f(z),g(z)}_u=\prodint{u_z, z\frac{\partial f}{\partial z} ( z)\overline{g(z)}}-\prodint{u_z,f ( z)\overline{z\frac{\partial g}{\partial z} (z)}}$ where $u_z$ is a  functional supported in the unit circle.}

For these cases,  we have
$g_{n,m}=\prodint{u_z,z^{n-m}}$,
which happens to be a  Toeplitz matrix, $g_{n,m}=g_{n+1,m+1}$. In this  Toeplitz scenario   the CMV Gram matrix has the following moment matrix form
$G=\prodint{u_z,\chi(z)(\chi(z))^\dagger}$.

Another important reduction appears for the zero order diagonal case when the linear functional $u_z$ happens to be, not only quasidefinite, but also real, i.e. $\overline{\prodint{u_z,f(z)}}=\prodint{u_z,\overline{f(z)}}$ for every  test function $f(z)\in\mathcal F$.  Then, the Gram matrix is also Hermitian $G=G^\dagger$  and, consequently, $S_1=S_2$ and $H=\bar H$. For non-negative  linear functionals $u_z$ ($\prodint{u_z,f(z)}\in\mathbb R^+$, for every real test function $f:\mathbb C\to\mathbb R^+$).
For  real linear functionals the biorthogonality collapses to pseudo-orthogonality   (or quasidefinite  orthogonality), and for non-negative  linear functionals  to  orthogonality, in this case we have $H_l>0$, $l\in\{0,1,\dots\}$.

	Féjer \cite{fejer} and Riesz \cite{riesz} found a representation  for nonnegative  trigonometric polynomials.  Nonnegative trigonometric polynomials of the form
$f(\theta)=a_0+\sum_{k=1}^n(a_k\cos(k\theta)+b_k\sin(k\theta))$
can  always be written as  $f(\theta)=|p(z)|^2$ where $p(z)=\sum\limits_{l=0}^n p_nz^n$ and $z=\Exp{\operatorname{i}\theta}$.
This is equivalent to write \cite{Grenader}
$f(\theta)=z^{-n} P(z)$ with $z=\Exp{\operatorname{i}\theta}$
and $P(z)\in\mathbb C[z]$ is a polynomial with $\deg P(z)=2n$ such that
$ P(z)=P^*(z)$, fulfilling $z^{-n}P(z)=|P(z)|$ for $z\in\mathbb T$.
The Szeg\H{o} reciprocal polynomial is $ P^*(z):= z^{2 n}\bar P(z^{-1})$ of $P(z)$.  Observe that  for $z\in\mathbb C^*$ the function $L(z)=z^{-n} P(z)$ is not any more a trigonometric polynomial but a Laurent polynomial. Given a Laurent polynomial its reciprocal is given by $L_*(z):=\bar L(z^{-1})$, thus for $L(z)=z^{-n}P(z)$ we have $L_*(z)= z^n\bar P (z^{-1})=z^{-n}P^*(z)$ and if $P(z)=P^*(z)$ we find $L_*(z)=L(z)$; the positivity condition reads: $f(\theta)=L(z)$ with $L(z)$ a Laurent polynomial with $L(z)=L_*(z)$ and $L(z)=|L(z)|$ for $z\in\mathbb T$.
Notice that the self-reciprocal condition $L(z)=L_*(z)$ is simply the reality condition for the corresponding Laurent polynomial on the unit circle,  that is  $L(z)=\overline{L(z)}$ for $z\in\mathbb T$, or equivalently that the corresponding trigonometric polynomial takes real values.

The Geronimus--Uvarov perturbations  within a zero order Toeplitz scenario are of the form
\begin{align*}
\tilde u^{(1,2)}_{z_1,\bar z_2}&=\frac{{L_{\mathfrak C,*}^{(2)}(z)}
}{L_\Gamma^{(1)}(z)}	(u)_z+\overline{{L_{\mathfrak C}^{(2)}(z_2)}}\sum_{i=1}^{d^{(1)}}\sum_{l=0}^{m^{(1)}_{i}-1}\frac{(-1)^{l}}{l!}\delta^{(l)}(z_1-\zeta^{(1)}_{i})
\otimes \overline{(\xi^{(1)}_{i,l})_{ z_2}},  \\
\tilde u^{(2,1)}_{z_1,\bar z_2}
&=\frac{L_{\mathfrak C}^{(1)}(z)}{{L_{\Gamma,*}^{(2)}(z)}}(u)_z+L_{\mathfrak C}^{(1)}(z_1)
\sum_{i=1}^{d^{(2)}}\sum_{l=0}^{m^{(2)}_{i}-1}\frac{(-1)^{l}}{l!}(\xi^{(2)}_{i,l})_{z_1}\otimes\overline{\delta^{(l)}(z_2- \zeta^{(2)}_{i})}.
\end{align*}
When do these two  transformations happen to be same?
Let us
 assume that
$L_{\mathfrak C,*}^{(2)}(z)$ and $L_\Gamma^{(1)}(z)$ are coprime Laurent polynomials and  that
$L_{\mathfrak C}^{(1)}(z)$ and $L_{\Gamma,*}^{(2)}(z)$ are coprime, as well. Thus,  both transformations could possibly be the same  if  only if
\begin{align*}
L_{\mathfrak C,*}^{(2)}(z) &= L_{\mathfrak C}^{(1)}(z)=:L_{\mathfrak C}(z), & L_\Gamma^{(1)}(z)&=L_{\Gamma,*}^{(2)}(z)=:L_\Gamma(z).
\end{align*}
We will take the following linear functionals 
$L_{\mathfrak C,*}(z_2)(\xi^{(1)}_{i,k})_{z_2}=\sum\limits_{j=1}^d\sum\limits_{l=0}^{m_j-1}\frac{(-1)^l}{l!}\bar \Xi_{i,k|j,l}\delta^{(l)}(z_2-(\bar\zeta_{j})^{-1})$ and $
L_{\mathfrak C}(z)(\xi^{(2)}_{j,l})_{z_1}=\sum\limits_{i=1}^d\sum\limits_{k=0}^{m_i-1}\frac{(-1)^k}{k!}\Xi_{i,k|j,l}\delta^{(k)}(z_1-\zeta_{i})$,  with $\zeta_i,m_i$  the zeros and corresponding multiplicities of $L_\Gamma$,
and the Geronimus--Uvarov transforms are
\begin{align*}
\tilde u _{z_1,\bar z_2}&=
\frac{L_{\mathfrak C}(z)}{L_\Gamma(z)}	(u)_z+\sum_{i,j=1}^{d}\sum_{k=0}^{m_{i}-1}\sum_{l=0}^{m_j-1}\frac{(-1)^{k+l}}{k!l!}\Xi_{i,k|j,l}\delta^{(k)}(z_1-\zeta_{i})\otimes\overline{\delta^{(l)}\big(z_2- (\bar \zeta_{j})^{-1}\big)},
\end{align*}
so that
\begin{align*}
\prodint{f(z_1),g(z_2)}_{\tilde u}=\prodint{\frac{L_{\mathfrak C}(z)}{L_\Gamma(z)}(u)_z, f(z)\overline{g(z)}}
	+\sum_{i,j=1}^{d}\sum_{k=0}^{m_{i}-1}\sum_{l=0}^{m_j-1}\frac{1}{k!l!}\Xi_{i,k|j,l}f^{(k)}(\zeta_{i})\overline{g^{(l)}\big( (\bar \zeta_{j})^{-1}\big)}.
\end{align*}
This mass term is possibly not supported neither on the diagonal nor on the unit circle and, therefore, not Toeplitz.
However, it is the most general mass term such that both  Geronimus--Uvarov transformations, of a zero order Toeplitz sesquilinear form,
are equal.  We observe that  \cite{toledano}
\begin{align*}
\prodint{(\xi^{(1)}_{i,k})_{z_2},\phi(z_2)}&=\sum_{j=1}^d\sum_{l=0}^{m_j-1}\frac{1}{l!}\left(\frac{\phi(\zeta)}{L_{\mathfrak C,*}(\zeta)}\right)^{(l)}\bigg|_{\zeta=(\bar\zeta_{j})^{-1}}\bar \Xi_{i,k|j,l},\\
\prodint{(\xi^{(2)}_{j,l})_{z_1},\phi(z_1)}&=\sum_{i=1}^d\sum_{k=0}^{m_i-1}\frac{1}{k!}\left(
\frac{\phi(\zeta)}{L_{\mathfrak C}(\zeta)}\right)^{(k)}\bigg|_{\zeta=\zeta_{i}}\Xi_{i,k|j,l},
\end{align*}
and introduce
\begin{align*}
\Xi&:=
\begin{bmatrix}
\Xi_{1,0|1,0}& \cdots & \Xi_{1,0|1,m_1-1}& \dots &\Xi_{1,0|d,0}& \cdots &\Xi_{1,0|d,m_d-1}\\
\vdots & & \vdots & &\vdots & &\vdots\\
\Xi_{1,m_1-1|1,0}& \cdots& \Xi_{1,m_1-1|1,m_1-1} &\dots &\Xi_{1,m_1-1|d,0}& \cdots &\Xi_{1,m_1-1|d,m_d-1}\\
\vdots & & \vdots & &\vdots & &\vdots\\
\Xi_{d,0|1,0} &\dots &\Xi_{d,0|1,m_1-1}  &\dots &\Xi_{d,0|d,0}& \cdots &\Xi_{d,0|d,m_d-1}\\
\vdots & & \vdots\\
\Xi_{d,m_d-1|1,0}&\cdots &  \Xi_{d,m_d-1|1,m_1-1}& \dots  & \Xi_{d,m_d-1|d,0}& \cdots & \Xi_{d,m_d-1|d,m_d-1}
\end{bmatrix}\in\mathbb C^{2n\times 2n}.
\end{align*}
Then, the following expressions in terms of spectral jets
$\prodint{\xi^{(1)},\phi}=\mathcal J ^{L_{\Gamma,*}}_{\frac{\phi}{L_{\mathfrak C, *}}}\Xi^\dagger$ and $\prodint{\xi^{(2)},\phi}=\mathcal J ^{L_\Gamma}_{\frac{\phi}{L_{\mathfrak C}}}\Xi$ hold.
In this setting it is convenient to introduce
	\begin{align*}
	\tilde \tau_l^{(2,1)}&:=\begin{vmatrix}
	\mathcal J^{\overline{L_{\Gamma,*}}}_{ C_{1,l-2 N_\Gamma}}-
	\mathcal J ^{L_\Gamma}_{\frac{\phi_{1,l-2 N_\Gamma}}{L_{\mathfrak C}}}\Xi\mathcal L_*
& \mathcal J^{{L_{\mathfrak C}}}_{ \phi_{1,l-2 N_\Gamma}}\\\vdots  &\vdots \\
	\mathcal J^{\overline{L_{\Gamma,*}}}_{ C_{1,l+2 N_{\mathfrak C}-1}}-
	\mathcal J^{{L_{\Gamma}}}_{\frac{\phi_{1,l+2 N_{\mathfrak C}-1}}{L_{\mathfrak C}}}
\Xi\mathcal L_*& 	\mathcal J^{{L_{\mathfrak C}}}_{ \phi_{1,l+2 N_{\mathfrak C}-1}}
	\end{vmatrix}, &
	\tilde \tau^{(1,2)}_l&:=\begin{vmatrix}
	\mathcal J^{\overline{L_\Gamma}}_{ C_{2,l-2 N_\Gamma}}-\mathcal J ^{L_{\Gamma,*}}_{\frac{\phi_{2,l-2 N_\Gamma}}{L_{\mathfrak C, *}}}\Xi^\dagger
\mathcal L & \mathcal J^{{L_{\mathfrak C,*}}}_{ \phi_{2,l-2 N_\Gamma}}\\\vdots  &\vdots \\
	\mathcal J^{\overline{L_\Gamma}}_{ C_{2,l+2 N_{\mathfrak C}-1}}-
	\mathcal J ^{L_{\Gamma,*}}_{
		\frac{
			\phi_{2,l+2 N_{\mathfrak C}-1}
		}
	{L_{\mathfrak C, *}}
	}
\Xi^\dagger\mathcal L& 	\mathcal J^{{L_{\mathfrak C,*}}}_{ \phi_{2,l+2 N_{\mathfrak C}-1}}
	\end{vmatrix}.
	\end{align*}

\begin{pro}[Geronimus--Uvarov perturbations of  the zero order Toeplitz case]
Let's assume that all perturbing  Laurent polynomials are prepared. Then, for $l\geq 2\max(N_\Gamma,N_{\mathfrak C})$ and  $\tilde \tau_l^{(2,1)}\tilde\tau^{(1,2)}_{l}\neq 0$
	we have the following determinantal  formulas
	\begin{align*}
	\tilde\phi_{1,l}(z)
	&=
	\frac{	L_{\mathfrak C,(-1)^{l} N_{\mathfrak C}}}{L_{\mathfrak C}(z)}\frac{1}{\tilde\tau^{(2,1)}_l}\begin{vmatrix}
	\mathcal J^{\overline{L_{\Gamma,*}}}_{ C_{1,l-2 N_\Gamma}}-
	\mathcal J ^{L_\Gamma}_{\frac{\phi_{1,l-2 N_\Gamma}}{L_{\mathfrak C}}}\Xi\mathcal L_*
	& \mathcal J^{{L_{\mathfrak C}}}_{ \phi_{1,l-2 N_\Gamma}}& \phi_{1,l-2 N_\Gamma}(z)\\\vdots  &\vdots&\vdots \\
	\mathcal J^{\overline{L_{\Gamma,*}}}_{ C_{1,l+2 N_{\mathfrak C}}}-
	                                                              	\mathcal J^{{L_{\Gamma}}}_{\frac{\phi_{1,l+2 N_{\mathfrak C}}}{L_{\mathfrak C}}}
	\Xi\mathcal L_*&
		\mathcal J^{{L_{\mathfrak C}}}_{ \phi_{1,l+2 N_{\mathfrak C}}}&\phi_{1,l+2 N_{\mathfrak C}}(z)
	\end{vmatrix}\\&=-\frac{{L_\Gamma}(z)	}{L_{\Gamma,(-1)^lN_\Gamma}}\frac{H_{l-2 N_\Gamma}}{\tilde\tau^{(1,2)}_l}
	\begin{vmatrix}
	\overline{ \mathcal J^{ \overline{L_\Gamma}}_{C_{2,l-2 N_\Gamma+1}}-	\mathcal J ^{L_{\Gamma,*}}_{\frac{\phi_{2,l-2 N_\Gamma}}{L_{\mathfrak C, *}}}\Xi^\dagger\mathcal L}&\overline{ \mathcal J^{{L_{\mathfrak C,*}}}_{\phi_{2,l-2 N_\Gamma+1}}}\\
	\vdots &\vdots\\  \overline{
		\mathcal J^{ \overline{L_\Gamma}}_{	C_{2,l+2 N_{\mathfrak C}-1}}-\mathcal J ^{L_{\Gamma,*}}_{\frac{\phi_{2,l+2 N_{\mathfrak C}-1}}{L_{\mathfrak C, *}}}\Xi^\dagger	\mathcal L}&\overline{ \mathcal J^{{L_{\mathfrak C,*}}}_{\phi_{2,l+2 N_{\mathfrak C}-1}}}\\
	\mathcal J^{ \overline{L_\Gamma}}_{K_{C_2}^{[l-2N_\Gamma+1]}}(z)
	-
	\mathcal J ^{\overline{L_{\Gamma,*}}}_{\frac{K^{[l-2N_\Gamma+1]}}{\overline{L_{\mathfrak C, *}}}}\Xi^\top	
	\overline{\mathcal L}
	+
	\frac{1}{L_\Gamma(z)}
	\mathcal J^{ \overline{L_\Gamma}}_{\delta {L_\Gamma}}(z)&\mathcal J^{\overline{L_{\mathfrak C,*}}}_{K^{[l-2N_\Gamma+1]}}(z)
	\end{vmatrix}
	\\
	\tilde  H_{l}&=	 \dfrac{L_{\mathfrak C,(-1)^{l} N_{\mathfrak C}}}{ \overline{L_{\Gamma,(-1)^{l+1}N_\Gamma}}}H_{l-2 N_\Gamma}
	\frac{\tilde\tau^{(2,1)}_{l+1}}{\tilde\tau^{(2,1)}_l}= \dfrac{\overline{L_{\mathfrak C,(-1)^{l} N_{\mathfrak C}}}}{ {L_{\Gamma,(-1)^{l+1}n}}} H_{l-2 N_\Gamma}\frac{\overline{\tilde \tau}^{(1,2)}_{l+1}}{
	\overline{	\tilde \tau}^{(1,2)}_{l}},
\end{align*}
\begin{align*}
	\overline{\tilde \phi_{2,l}(z)}&=-\frac{	\overline{ L_{\Gamma,*}( z)}}{\overline{L_{\Gamma,(-1)^{l+1}N_\Gamma}}}\frac{H_{l-2 N_\Gamma}}{\tilde\tau^{(2,1)}_l}\begin{vmatrix}
	\mathcal J^{ \overline{L_{\Gamma,*}}}_{ 	C_{1,l-2 N_\Gamma+1}}-\mathcal J ^{L_\Gamma}_{\frac{\phi_{1,l-2 N_\Gamma}}{L_{\mathfrak C}}}\Xi\mathcal L_*&\mathcal J^{L_{\mathfrak C}}_{\phi_{1,l-2 N_\Gamma+1}}\\\vdots&\vdots\\\mathcal J^{ \overline{L_{\Gamma,*}}}_{ C_{1,l+2 N_{\mathfrak C}-1}}-	\mathcal J^{{L_{\Gamma}}}_{\frac{\phi_{1,l+2 N_{\mathfrak C}-1}}{L_{\mathfrak C}}}
	\Xi\mathcal L_*&\mathcal J^{L_{\mathfrak C}}_{\phi_{1,l+2 N_{\mathfrak C}-1}}\\
	\mathcal  J^{\overline{L_{\Gamma,*}}}_{K_{C_1}^{[l-2N_\Gamma+1]}}(\bar z)-
		\mathcal J^{{L_{\Gamma}}}_{\frac{K^{[l-2N_\Gamma+1]}}{L_{\mathfrak C}}}
	\Xi\mathcal L_*
	+\frac{1}{\overline{ L_{\Gamma,*}( z)} }\mathcal  J^{\overline{ L_{\Gamma,*}}}_{\delta \overline{ L_{\Gamma,*}}}(\bar z)&
	\mathcal J^{L_{\mathfrak C}}_{K^{[l-2N_\Gamma+1]}}(\bar z)
	\end{vmatrix}\\
		&=
	\frac{	\overline{L_{\mathfrak C,(-1)^{l+1} N_{\mathfrak C}}}}{\overline{L_{\mathfrak C,*}(z)}}\frac{1}{\overline{\tilde \tau}^{(1,2)}_l}\overline{\begin{vmatrix}
	\mathcal J^{\overline{L_\Gamma}}_{ C_{2,l-2 N_\Gamma}}-
	\mathcal J ^{L_{\Gamma,*}}_{\frac{\phi_{2,l-2 N_\Gamma}}{L_{\mathfrak C, *}}}\Xi^\dagger
	\mathcal L & \mathcal J^{{L_{\mathfrak C,*}}}_{ \phi_{2,l-2 N_\Gamma}}& \phi_{2,l-2 N_\Gamma}(z)\\\vdots  &\vdots&\vdots \\
	\mathcal J^{\overline{L_\Gamma}}_{ C_{2,l+2 N_{\mathfrak C}}}-
	\mathcal J ^{L_{\Gamma,*}}_{\frac{\phi_{2,l+2 N_{\mathfrak C}}}{L_{\mathfrak C, *}}}\Xi^\dagger	
	\mathcal L& 	\mathcal J^{{L_{\mathfrak C,*}}}_{ \phi_{2,l+2 N_{\mathfrak C}}}&\phi_{2,l+2 N_{\mathfrak C}}(z)
	\end{vmatrix}}.
	\end{align*}
\end{pro}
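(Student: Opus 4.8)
The plan is to obtain this proposition as a \emph{specialization} of Theorem~\ref{Christoffel-Geronimus formulas} to the zero order Toeplitz scenario, under the coincidence conditions $L_{\mathfrak C,*}^{(2)}=L_{\mathfrak C}^{(1)}=:L_{\mathfrak C}$ and $L_\Gamma^{(1)}=L_{\Gamma,*}^{(2)}=:L_\Gamma$ derived in the preceding discussion, and to read off the second family of formulas by applying the complex conjugation/reciprocal symmetry that identifies the two Geronimus--Uvarov transformations. First I would substitute the reductions into the hypotheses of Theorem~\ref{Christoffel-Geronimus formulas}: in the Toeplitz case the Gram matrix is a moment matrix, so $G=\prodint{u_z,\chi(z)(\chi(z))^\dagger}$, and since $L_{\mathfrak C}^{(2)}(z_2)$ acting in the second slot becomes $L_{\mathfrak C,*}^{(2)}(z_1)=L_{\mathfrak C}(z_1)$ inside the moment integral (using $z_2\mapsto z_1$ on $\mathbb T$ together with $\overline{L(z)}=L_*(z)$ there), both perturbed functionals collapse to the single $\tilde u_{z_1,\bar z_2}$ written just above the statement. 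This makes $\tilde\phi^{(1,2)}_{1,l}=\tilde\phi^{(2,1)}_{1,l}=:\tilde\phi_{1,l}$ and $\tilde H^{(1,2)}_l=\tilde H^{(2,1)}_l=:\tilde H_l$, with the analogous identification for the $2$-family up to the reciprocal map.

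Next I would translate the mass terms. The key inputs are the two identities $\prodint{\xi^{(1)},\phi}=\mathcal J^{L_{\Gamma,*}}_{\phi/L_{\mathfrak C,*}}\Xi^\dagger$ and $\prodint{\xi^{(2)},\phi}=\mathcal J^{L_\Gamma}_{\phi/L_{\mathfrak C}}\Xi$ recalled from \cite{toledano}, which rewrite the generic linear-functional pairings $\prodint{\xi^{(a)},\phi_{\cdot}}\mathcal L^{(a)}$ appearing in $\tilde\tau_l^{(2,1)}$, $\tilde\tau_l^{(1,2)}$ and in all four determinantal formulas of Theorem~\ref{Christoffel-Geronimus formulas} purely in terms of spectral jets of $\phi/L_{\mathfrak C}$ (resp. $\phi/L_{\mathfrak C,*}$) multiplied by $\Xi$ (resp. $\Xi^\dagger$) and by $\mathcal L$ (resp. $\mathcal L_*$). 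Plugging these in, the matrices $\tilde\tau^{(2,1)}_l$ and $\tilde\tau^{(1,2)}_l$ become exactly the determinants displayed just before the proposition, and formulas \eqref{Ger12}, \eqref{GerH2}, \eqref{Ger22} become the first, the $\tilde H_l$, and the $\overline{\tilde\phi_{2,l}}$ expressions of the proposition, with $N_\Gamma, N_{\mathfrak C}$ now denoting $\underline N_\Gamma=N^\pm_\Gamma$, $\underline N_{\mathfrak C}=N^\pm_{\mathfrak C}$ since prepared Laurent polynomials have $N^+=N^-$. For the leading coefficients one uses that for a self-reciprocal $L_\Gamma$ one has $L_{\Gamma,(-1)^lN_\Gamma}=\overline{L_{\Gamma,(-1)^{l+1}N_\Gamma}}$, which is how the denominators $\overline{L_{\Gamma,(-1)^lN_\Gamma}}$ get rewritten.

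Then I would produce the \emph{second} equality in each line — the alternative Christoffel formula — by applying the coincidence of the two transformations in the opposite order: formulas \eqref{Ger21}, \eqref{GerH1}, \eqref{Ger11} of Theorem~\ref{Christoffel-Geronimus formulas} give $\tilde\phi^{(1,2)}_{2,l}$, $\tilde H^{(1,2)}_l$, $\tilde\phi^{(1,2)}_{1,l}$, and under the identifications above $\tilde\phi^{(1,2)}_{1,l}=\tilde\phi_{1,l}$ and $\overline{\tilde\phi^{(1,2)}_{2,l}}$ matches $\overline{\tilde\phi_{2,l}}$; conjugating \eqref{Ger21} and using $L_{\mathfrak C,*}(z)=\overline{L_{\mathfrak C}(1/\bar z)}$ on $\mathbb T$ together with $\overline{\mathcal J^{L}_f}=\mathcal J^{\overline L}_{\bar f}$ converts it into the quoted $\overline{\tilde\phi_{2,l}}$ second expression, and similarly \eqref{Ger11} and \eqref{GerH1} give the second forms for $\tilde\phi_{1,l}$ and $\tilde H_l$. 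The mixed-kernel rows in \eqref{Ger22}/\eqref{Ger11} are handled by the same substitution of $\prodint{(\xi^{(a)})_w,K^{[\cdot]}(\cdot,w)}$ via $\mathcal J^{L_\Gamma}_{K/L_{\mathfrak C}}\Xi$, which is just the $K$-valued instance of the same \cite{toledano} identity, yielding the $\mathcal J^{L_\Gamma}_{K^{[l-2N_\Gamma+1]}/L_{\mathfrak C}}\Xi\mathcal L_*$ (resp. $\Xi^\top\overline{\mathcal L}$) entries shown.

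The main obstacle I expect is purely bookkeeping: keeping the conjugations, the reciprocals $L\mapsto L_*$, and the two index parities $(-1)^l$ consistently aligned when passing between the $(1,2)$ and $(2,1)$ families, so that the two determinantal expressions for the \emph{same} $\tilde\phi_{1,l}$ (and for $\tilde H_l$, and for $\overline{\tilde\phi_{2,l}}$) genuinely coincide rather than merely resemble each other; verifying that the $\Xi^\dagger$ versus $\Xi$ and $\mathcal L$ versus $\mathcal L_*$ placements are forced — not chosen — is the one step that requires care, and it rests on the reality/self-reciprocity $L_\Gamma=L_{\Gamma,*}$, $L_{\mathfrak C}=L_{\mathfrak C,*}^{(\cdot)}$ together with the Toeplitz Hermiticity $G=G^\dagger$ (hence $S_1=S_2$, $H=\bar H$ in the unperturbed data, though not necessarily in the perturbed one, since the mass $\Xi$ need not be supported on the diagonal or the circle). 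Everything else is a direct transcription of Theorem~\ref{Christoffel-Geronimus formulas} under the dictionary just described.
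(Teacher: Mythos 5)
Your proposal is correct and follows essentially the same route as the paper: the paper gives no separate proof of this proposition but derives it, exactly as you do, by specializing Theorem~\ref{Christoffel-Geronimus formulas} under the identifications $L_{\mathfrak C,*}^{(2)}=L_{\mathfrak C}^{(1)}=L_{\mathfrak C}$, $L_\Gamma^{(1)}=L_{\Gamma,*}^{(2)}=L_\Gamma$ and by rewriting the mass pairings through $\prodint{\xi^{(1)},\phi}=\mathcal J^{L_{\Gamma,*}}_{\phi/L_{\mathfrak C,*}}\Xi^\dagger$, $\prodint{\xi^{(2)},\phi}=\mathcal J^{L_\Gamma}_{\phi/L_{\mathfrak C}}\Xi$, the two determinantal expressions for each object coming from the $(2,1)$ and $(1,2)$ families respectively. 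The only small slip is that the leading-coefficient conversion $\overline{L^{(2)}_{\Gamma,(-1)^lN_\Gamma}}\leftrightarrow L_{\Gamma,(-1)^{l+1}N_\Gamma}$ needs only the reciprocal relation $L^{(2)}_\Gamma=L_{\Gamma,*}$, not self-reciprocity of $L_\Gamma$ (which is imposed only in the subsequent real reduction).
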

Let us further consider  masses  that are restricted to be supported on the diagonal \cite{toledano}. For this discussion we need of
	\begin{align*}
	\mathcal B^{[i]}_{k,j}&:=
	(-1)^k\!\!\!\!\!\!\!\!\!\!\!\!\!\!\!\sum_{\substack{j_1+j_2+\dots+j_{k-j+1}=j\\j_1+2j_2+\dots+(k-j+1)j_{k-j+1}=k}}\!\!\!\!\!\!\!\!\!\!
	\frac{k!}{j_1!\cdots j_{k-j+1}!}(\zeta_i)^{-k-j},\\
	\mathcal B^{[i]}&:=\begin{bsmallmatrix}
	1 & 0 & 0 &0 &\dots &0\\
	0 &\mathcal B^{[i]}_{1,1} & 0&0 &\dots &0\\
	0 &\mathcal B^{[i]}_{2,1} & \mathcal B^{[i]}_{2,2} &0 &\dots &0\\
	0 &\mathcal B^{[i]}_{3,1} & \mathcal B^{[i]}_{3,2} &\mathcal B^{[i]}_{3,3}&\dots &0\\
	\vdots& \vdots&\vdots&\vdots&\ddots&\\
	0 &\mathcal B^{[i]}_{m_i-1,1} & \mathcal B^{[i]}_{m_i-1,2} &\mathcal B^{[i]}_{m_1-1,3}&\dots &\mathcal B^{[i]}_{m_i-1,m_i-1}
	\end{bsmallmatrix},&
	\Xi^i&:=\begin{bsmallmatrix}
	\Xi_0^i &\frac{1}{1!1!}\Xi_1^i &\frac{1}{1!2!}\Xi_2^i&\dots &\frac{1}{1!(m_i-1)!}\Xi_{m_i-1}\\
	\frac{1}{1!1!}\Xi_1^ i&\frac{1}{1!1!}\Xi_2^i & & \iddots &0\\
	\frac{1}{2!1!}\Xi_2^i & & \iddots&\iddots &\vdots\\\vdots&\iddots&\iddots&&\vdots\\
	\frac{1}{(m_i-1)!1!}
	\Xi_{m_i-1}&0 & \dots&\dots &0
	\end{bsmallmatrix},
	\end{align*}
so that, if we choose
$\Xi=\diag(\Xi_1,\dots,\Xi_d)$ with $\Xi_i=\Xi^i\mathcal B^{[i]}$, the Geronimus--Uvarov perturbed sesquilinear form will be
	\begin{align*}
	\prodint{f(z),g(z)}_{\tilde u}=\prodint{\frac{L_{\mathfrak C}(z)}{L_\Gamma (z)}u_z,f(z)\overline{g(z)}}	
	+\prodint{\sum_{i=1}^{d}\sum_{l=0}^{m_i-1}\Xi^i_l\frac{(-1)^l}{l!}\delta^{(l)}(z-\zeta_i),f(z)g_{*}(z)},
	\end{align*}
which is supported on the diagonal but, due to the mass terms,  is not of zero order. The zero order appears when the higher derivatives of the Dirac functionals are cancelled
	\begin{align}\label{circle}
	\prodint{f(z),g(z)}_{\tilde u}=\prodint{\frac{L_{\mathfrak C}(z)}{L_\Gamma (z)}u_z,f(z)\overline{g(z)}}	
	+\prodint{\sum_{i=1}^{d}\Xi^i_0\delta(z-\zeta_i),f(z)g_{*}(z)},
	\end{align}
which is achieved with $
\Xi^i=\Xi_i=\begin{bsmallmatrix}
\xi^i &0&\dots &0\\
0&0&  &0\\
0&0 &\dots &0
\end{bsmallmatrix}\in\mathbb C^{m_i\times m_i}$
and, consequently, we will have the following expression
$
\Xi \mathcal L_*=\Xi^\dagger\mathcal L=\diag\left(\begin{bsmallmatrix}
0  &\dots& 0&\xi^1 \overline{L_{\Gamma,[1]}( \zeta_1)}\\
0 &\dots &0&0\\
\vdots & & \vdots&\vdots\\
0&\dots&0&0
\end{bsmallmatrix} ,\dots, \begin{bsmallmatrix}
0  &\dots& 0&\xi^d \overline{L_{\Gamma,[d]}( \zeta_d)}\\
0 &\dots &0&0\\
\vdots & & \vdots&\vdots\\
0&\dots&0&0
\end{bsmallmatrix} \right)=:C$,
so that
\begin{align*}
\mathcal J^L_{\phi} \Xi\mathcal L_*=\Big[0,\dots,0, \phi(\zeta_1)\xi^1 \overline{L_{\Gamma,[1]}( \zeta_1)},\dots, 0,\dots,0, \phi(\zeta_d)\xi^d\overline{ L_{\Gamma,[d]}( \zeta_d)}\Big].
\end{align*}
As $L_{\mathfrak C}(z)$ and $L_\Gamma(z)$ are coprime the reality of the first term in the RHS of \eqref{circle} is ensured whenever $L_{\mathfrak C}(z)$ and  $L_{\Gamma}(z)$ are sel-reciprocal polynomials, $L_{\mathfrak C,*}(z)=L_{\mathfrak C}(z)$ and $L_{\Gamma,*}(z)=L_{\Gamma}(z)$.
The mass term will be real whenever $\xi^i\in\mathbb R$, $i\in\{1,\dots,d\}$. If we are interested in the non-negative situation, then we must further impose that the self-reciprocal Laurent polynomials $L_{\mathfrak C}(z)$ and $L_\Gamma(z)$ are such that $\frac{L_{\mathfrak C}(z)}{L_\Gamma(z)}=\frac{|L_{\mathfrak C}(z)|}{|L_\Gamma(z)|}$ and also that $\xi^i\geq 0$, $i\in\{1\dots,d\}$.

Self-reciprocality for the Laurent polynomial $L=L_{2n}z^{-n}(z-\zeta_1)^{m_1}\cdots (z-\zeta_d)^{m_d}$, $m_1+\dots +m_d=2n$,
requires
$\overline{L_{2n}}(\bar\zeta_1)^{m_1}\dots(\bar \zeta_d)^{m_d}(z-(\bar \zeta_1)^{-1})^{m_1}\cdots (z-(\bar\zeta_d)^{-1})^{m_d}=L_{2n}(z-\zeta_i)^{m_1}\cdots (z-\zeta_d)^{m_d}$,
that could be fulfilled  if only if
\begin{align*}
L(z)&=L_{2n}z^{-n}(z-\alpha_1)^{n_1}(z-(\bar\alpha_1)^{-1})^{n_1}\cdots (z-\alpha_r)^{n_r}(z-(\bar\alpha_r)^{-1})^{n_r} (z-\beta_1)^{2q_1}\cdots(z-\beta_s)^{2q_s},
\end{align*}
where $ n_1+\dots +n_r+q_1+\cdots+q_s=n$ and $\alpha_i\not\in \mathbb T$, $i\in\{1,\dots,r\}$ and $\beta_i\in\mathbb T$, $i\in\{1,\dots,s\}$, with
\begin{align*}
\operatorname{arg} L_{2n}= -\arg\alpha_1-\dots-\arg\alpha_r-\arg\beta_1-\dots-\arg\beta_s.
\end{align*}

Therefore, for this real Toeplitz zero order diagonal case, and using
\begin{align*}
\tilde \tau_l:=\begin{vmatrix}
	\mathcal J^{\overline{L_{\Gamma}}}_{ C_{l-2 N_\Gamma}}-
	\mathcal J ^{L_\Gamma}_{\frac{\phi_{l-2 N_\Gamma}}{L_{\mathfrak C}}}C
	& \mathcal J^{{L_{\mathfrak C}}}_{ \phi_{l-2 N_\Gamma}}\\\vdots  &\vdots \\
	\mathcal J^{\overline{L_{\Gamma}}}_{ C_{l+2 N_{\mathfrak C}-1}}-
	                                                              	\mathcal J^{{L_{\Gamma}}}_{\frac{\phi_{1,l+2 N_{\mathfrak C}-1}}{L_{\mathfrak C}}}
C&
		\mathcal J^{{L_{\mathfrak C}}}_{ \phi_{l+2 N_{\mathfrak C}-1}}
	\end{vmatrix},
 \end{align*}
we conclude that the perturbed orthogonal polynomials and its norms are
\begin{align*}
	\tilde\phi_{l}(z)
	&=
	\frac{	L_{\mathfrak C,(-1)^{l} N_{\mathfrak C}}}{L_{\mathfrak C}(z)}\frac{1}{\tilde\tau_l}\begin{vmatrix}
	\mathcal J^{\overline{L_{\Gamma}}}_{ C_{l-2 N_\Gamma}}-
	\mathcal J ^{L_\Gamma}_{\frac{\phi_{l-2 N_\Gamma}}{L_{\mathfrak C}}}C
	& \mathcal J^{{L_{\mathfrak C}}}_{ \phi_{l-2 N_\Gamma}}& \phi_{l-2 N_\Gamma}(z)\\\vdots  &\vdots&\vdots \\
	\mathcal J^{\overline{L_{\Gamma}}}_{ C_{l+2 N_{\mathfrak C}}}-
	                                                              	\mathcal J^{{L_{\Gamma}}}_{\frac{\phi_{l+2 N_{\mathfrak C}}}{L_{\mathfrak C}}}
C&
		\mathcal J^{{L_{\mathfrak C}}}_{ \phi_{l+2 N_{\mathfrak C}}}&\phi_{l+2 N_{\mathfrak C}}(z)
	\end{vmatrix}\\&=-\frac{{L_\Gamma}(z)	}{L_{\Gamma,(-1)^lN_\Gamma}}\frac{H_{l-2 N_\Gamma}}{\tilde\tau_l}
	\begin{vmatrix}
	\overline{ \mathcal J^{ \overline{L_\Gamma}}_{C_{l-2 N_\Gamma+1}}-	\mathcal J ^{L_{\Gamma}}_{\frac{\phi_{l-2 N_\Gamma}}{L_{\mathfrak C}}}C}&\overline{ \mathcal J^{{L_{\mathfrak C}}}_{\phi_{l-2 N_\Gamma+1}}}\\
	\vdots &\vdots\\  \overline{
		\mathcal J^{ \overline{L_\Gamma}}_{	C_{l+2 N_{\mathfrak C}-1}}-\mathcal J ^{L_{\Gamma}}_{\frac{\phi_{l+2 N_{\mathfrak C}-1}}{L_{\mathfrak C}}}C}&\overline{ \mathcal J^{{L_{\mathfrak C}}}_{\phi_{l+2 N_{\mathfrak C}-1}}}\\
	\mathcal J^{ \overline{L_\Gamma}}_{K_{C}^{[l-2N_\Gamma+1]}}(z)
	-
	\mathcal J ^{\overline{L_{\Gamma}}}_{\frac{K^{[l-2N_\Gamma+1]}}{\overline{L_{\mathfrak C}}}} \bar C
	+
	\frac{1}{L_\Gamma(z)}
	\mathcal J^{ \overline{L_\Gamma}}_{\delta {L_\Gamma}}(z)&\mathcal J^{\overline{L_{\mathfrak C}}}_{K^{[l-2N_\Gamma+1]}}(z)
	\end{vmatrix},
	\\
	\tilde  H_{l}&=	 \dfrac{L_{\mathfrak C,(-1)^{l} N_{\mathfrak C}}}{ \overline{L_{\Gamma,(-1)^{l+1}N_\Gamma}}}H_{l-2 N_\Gamma}
	\frac{\tilde\tau_{l+1}}{\tilde\tau_l}.
\end{align*}

\appendix
\section{Proofs}
\begin{proof}[Proof of Proposition \ref{Geronimus Connection Cauchy}]
From definition we have
\begin{align*}
\tilde C_{1}^{(1,2)}(z)&=\prodint{\tilde \phi_1^{(1,2)}(z_1), \frac{1}{\bar z-z_2}}_{\tilde u^{(1,2)}},&z&\not\in \overline{\operatorname{supp}_2(\tilde u^{(1,2)}),} \\
\tilde C_{1}^{(2,1)}(z)&=\prodint{\tilde \phi_1^{(2,1)}(z_1), \frac{1}{\bar z-z_2}}_{\tilde u^{(2,1)}}
&z&\not\in \overline{\operatorname{supp}_2(\tilde u^{(2,1)}),}\\
(	\tilde C_{2}^{(1,2)}(z))^\dagger&=\prodint{\frac{1}{\bar z-z_1}, (\tilde \phi^{(1,2)}_2(z_2))^\top}_{\tilde u^{(1.2)}},& z&\not\in \overline{\operatorname{supp}_1(\tilde u^{(1,2)})},\\
(	\tilde C_{2}^{(2,1)}(z))^\dagger&=\prodint{\frac{1}{\bar z-z_1}, (\tilde \phi^{(2,1)}_2(z_2))^\top}_{\tilde u^{(2,1)}},
& z&\not\in \overline{\operatorname{supp}_1(\tilde u^{(2,1)})}.
\end{align*}
Then, \eqref{CC12}  is proven as follows
\begin{align*}
(C_2(z))^\dagger \big(\Omega^{(1,2)}_2\big)^\dagger - L_\Gamma^{(1)}(\bar z)(\tilde C^{(1,2)}_2(z))^\dagger &=
\prodint{\frac{1}{\bar z-z_1},\big(\phi_2(z_2)\big)^\top}_u\big(\Omega^{(1,2)}_2\big)^\dagger -L_\Gamma^{(1)}(\bar z)
\prodint{\frac{1}{\bar z-z_1},\big(\tilde \phi^{(1,2)}_2(z_2)\big)^\top}_{\tilde u^{(1,2)}}\\&=
\prodint{\frac{1}{\bar z-z_1},\big(\tilde\phi^{(1,2)}_2(z_2)\big)^\top L_{\mathfrak C}^{(2)}(z_2)}_{ u}-
\prodint{\frac{L_\Gamma^{(1)}(\bar z)}{\bar z-z_1},\big(\tilde \phi^{(1,2)}_2(z_2)\big)^\top}_{\tilde u^{(1,2)}}\\
&=\prodint{\frac{L_\Gamma^{(1)}(z_1)}{\bar z-z_1},\big(\tilde\phi^{(1,2)}_2(z_2)\big)^\top )}_{ \tilde u^{(1,2)}}-
\prodint{\frac{L_\Gamma^{(1)}(\bar z)}{\bar z-z_1},\big(\tilde \phi^{(1,2)}_2(z_2)\big)^\top}_{\tilde u^{(1,2)}}\\
&=-\prodint{\frac{L_\Gamma^{(1)}(\bar z)-L_\Gamma^{(1)}(z_1)}{\bar z-z_1},\big(\tilde\phi^{(1,2)}_2(z_2)\big)^\top}_{\tilde u^{(1,2)}}.
\end{align*}
For \eqref{CC21} we have
\begin{align*}
\Omega^{(2,1)}_1 C_{1}(z)-\tilde C_{1}^{(2,1)}(z) { \overline{L_\Gamma^{(2)}}(z)}&=\Omega^{(2,1)}_1\prodint{ \phi_1(z_1), \frac{1}{\bar z-z_2}}_{ u}-\prodint{\tilde \phi_1^{(2,1)}(z_1), \frac{1}{\bar z-z_2}}_{\tilde u^{(2,1)}}{ \overline{L_\Gamma^{(2)}}(z)}\\
&=\prodint{ L_{\mathfrak C}^{(1)}(z_1)\tilde\phi^{(2,1)}_1(z_1), \frac{1}{\bar z-z_2}}_{ u}
-\prodint{\tilde \phi_1^{(2,1)}(z_1),
	\frac{	L_\Gamma^{(2)}( \bar z)}{\bar z-z_2}}_{\tilde u^{(2,1)}}\\
&=\prodint{\tilde\phi^{(2,1)}_1(z_1), \frac{L_\Gamma^{(2)}( z_2)}{\bar z-z_2}}_{ \tilde u^{(2,1)}}
-\prodint{\tilde \phi_1^{(2,1)}(z_1),
	\frac{	L_\Gamma^{(2)}( \bar z)}{\bar z-z_2}}_{\tilde u^{(2,1)}}\\
&=-\prodint{ \tilde\phi^{(2,1)}_1(z_1), \frac{L_\Gamma^{(2)}( \bar z)-L_\Gamma^{(2)}(z_2)}{\bar z-z_2}}_{ \tilde u^{(2,1)}}.
\end{align*}
The formula   \eqref{CC11} follows from
$\Omega^{(1,2)}_1 \tilde C^{(1,2)}_1(z)=\prodint{L_\Gamma^{(1)}(z_1)\phi_1(z_1),\frac{1}{\bar z-z_2}}_{\tilde u^{(1,2)}}
=\prodint{\phi_1(z_1),\frac{L_{\mathfrak C}^{(2)}(z_2)}{\bar z-z_2}}_{ u}$.
For  \eqref{CC22} we notice
\begin{align*}
\big(\tilde C^{(2,1)}_2(z)\big)^\dagger\big(\Omega_2^{(2,1)}\big)^\dagger&=
\prodint{\frac{1}{\bar z-z_1}, \big(\tilde\phi_2(z_2)\big)^\top}_{\tilde u^{(2,1)}}\big(\Omega_2^{(2,1)}\big)^\dagger
\\&=\prodint{\frac{1}{\bar z-z_1}, L_\Gamma^{(2)}(z_2)\big(\phi_2(z_2)\big)^\top}_{\tilde u^{(2,1)}}
\\&=\prodint{\frac{L_{\mathfrak C}^{(1)}(z_1)}{\bar z-z_1}, \big(\phi_2(z_2)\big)^\top}_{u}.
\end{align*}
\end{proof}

\begin{proof}[Proof of Proposition \ref{Geronimus Connection CD}]
	The relations
	\begin{align*}
	(\phi_2(z_1))^\dagger(\Omega^{(1,2)}_2 )^\dagger\big(\tilde H^{(1,2)}\big)^{-1} &=\overline{L_{\mathfrak C}^{(2)}(z_1)}(\tilde\phi^{(1,2)}_2(z_1))^\dagger\big(\tilde H^{(1,2)}\big)^{-1} ,&
	\big(\Omega^{(1,2)}_2\big)^\dagger\big(\tilde H^{(1,2)}\big)^{-1} \tilde\phi_1^{(1)}(z_2) = L_\Gamma^{(1)}(z_2)H^{-1}\phi_1(z_2),
	\end{align*}
imply
	\begin{align*}
	&\begin{multlined}[t][0.9\textwidth]
	\left[(\phi_2(z_1))^\dagger\right]^{[l]}\left[	\big(\Omega^{(1,2)}_2\big)^\dagger\big(\tilde  H^{(1,2)}\big)^{-1}\right] ^{[l]}\left[\tilde\phi_1^{(1,2)}(z_2) \right]^{[l]}+\left[(\phi_2(z_1))^\dagger\right]^{[\geq l]}
	\left[	\big(\Omega^{(1,2)}_2\big)^\dagger\big(\tilde  H^{(1,2)}\big)^{-1}\right] ^{[\geq l, l]}\left[\tilde\phi_1^{(1)}(z_2) \right]^{[l]}\\= \overline{ L_{\mathfrak C}^{(2)}( z_1)}\left[(\tilde \phi^{(1,2)}_2(z_1))^\dagger\right]^{[l]}\left[ (\tilde  H^{(1,2)})^{-1}\right]^{[l]}\left[\tilde \phi^{(1,2)}_1(z_2) \right]^{[l]},
	\end{multlined}\\
	&\begin{multlined}[t][0.9\textwidth]\left[(\phi_2(z_1)	 )^\dagger\right]^{[l]}\left[(\Omega^{(1,2)}_2 )^\dagger\big(\tilde  H^{(1,2)}\big)^{-1} \right]^{[l]}\left[\tilde \phi^{(1,2)}_1(z_2)\right]^{[l]}+\left[(\phi_2(z_1)	 )^\dagger\right]^{[l]}\left[(\Omega^{(1,2)}_2 )^\dagger\big(\tilde  H^{(1,2)}\big)^{-1} \right]^{[l,\geq l]}\left[\tilde \phi^{(1,2)}_1(z_2)\right]^{[\geq l]}\\  =L_\Gamma^{(1)}(z_2)\left[(\phi_2(z_1))^\dagger\right]^{[l]}\left[ H^{-1}\right]^{[l]}\left[\phi_1(z_2)\right]^{[l]},
	\end{multlined}
	\end{align*}
so that
	\begin{align*}
	&\begin{multlined}[t][0.9\textwidth]
	\left[(\tilde\phi_1^{(1)}(z_2) )^\top\right]^{[l]}
	\left[\big({\tilde H}^{(1,2)}\big)^{-1}	\bar \Omega^{(1,2)}_2\right] ^{[l,\geq l]}
	\left[ \overline{\phi_2(z_1)}\right]^{[\geq l]}-	\left[(\tilde\phi_1^{(1,2)}(z_2) )^\top\right]^{[\geq l]}
	\left[\big({\tilde H}^{(1,2)}\big)^{-1}	\bar \Omega^{(1,2)}_2\right] ^{[\geq l,l]}
	\left[ \overline{\phi_2(z_1)}\right]^{[ l]}\\
	= \overline{L_{\mathfrak C}^{(2)}( z_1)}{\tilde K^{(1,2),[l]}(\bar z_1,z_2)} -{L_\Gamma^{(1)}(z_2)K^{[l]}(\bar z_1,z_2)},
	\end{multlined}
	\end{align*}
and \eqref{GerKerNor1} follows.
	On the other hand, we have
	\begin{align*}
	(	\tilde H^{(2,1)} )^{-1}	\Omega_1^{(2,1)} \phi_1(z_2)&=L_{\mathfrak C}^{(1)}(z_2)(	\tilde H^{(2,1)} )^{-1}\tilde\phi^{(2,1)}_1(z_2), & (\tilde\phi^{(2,1)}_2(z_1))^\dagger(	\tilde H^{(2,1)} )^{-1}\Omega^{(2,1)}_1 &=\overline{L_\Gamma^{(2)}(z_1)} (\phi_2(z_1))^\dagger H^{-1}.
	\end{align*}
Therefore,
	\begin{align*}
	&\begin{multlined}[t][0.9\textwidth]\left[(\tilde{\phi}_2^{(2,1)}(z_1))^\dagger\right]^{[l]}\left[(	\tilde H^{(2,1)} )^{-1}	\Omega_1^{(2,1)} \right]^{[l]}\left[\phi_1(z_2)\right]^{[l]}+\left[(\tilde{\phi}_2^{(2,1)}(z_1))^\dagger\right]^{[l]}\left[(	\tilde H^{(2,1)} )^{-1}	\Omega_1^{(2,1)} \right]^{[l,\geq l]}\left[\phi_1(z_2)\right]^{[\geq l]}\\
	=L_{\mathfrak C}^{(1)}(z_2)\left[(\tilde{\phi}_2^{(2,1)}(z_1))^\dagger\right]^{[l]}\left[(	\tilde H^{(2,1)} )^{-1}\right]^{[l]}\left[\tilde\phi^{(2,1)}_1(z_2)\right]^{[l]},
	\end{multlined}\\&\begin{multlined}[t][0.9\textwidth]
	\left[(\tilde\phi^{(2,1)}_2(z_1))^\dagger\right]^{[l]}\left[(	\tilde  H^{(2,1)} )^{-1}\Omega^{(2,1)}_1 \right]^{[l]}\left[\phi_1(z_2)\right]^{[l]}+\left[(\tilde\phi^{(2,1)}_2(z_1))^\dagger\right]^{[\geq l]}\left[(	\tilde H^{(2,1)} )^{-1}\Omega^{(2,1)}_1 \right]^{[\geq l,l]}\left[\phi_1(z_2)\right]^{[l]}\\
	=\overline{L_\Gamma^{(2)}(z_1)} \left[(\phi_2(z_1))^\dagger\right] ^{[l]}\left[H^{-1}\right]^{[l]}\left[\phi_1(z_2)\right]^{[l]},
	\end{multlined}
	\end{align*}
from where we deduce that
	\begin{align*}
	&\begin{multlined}[t][0.9\textwidth]
	\left[(\tilde\phi^{(2,1)}_2(z_1))^\dagger\right]^{[ l]}\left[(	\tilde H^{(2,1)} )^{-1}\Omega^{(2,1)}_1 \right]^{[ l,\geq l]}\left[\phi_1(z_2)\right]^{[\geq l]}-	\left[(\tilde\phi^{(2,1)}_2(z_1))^\dagger\right]^{[\geq l]}\left[(	\tilde H^{(2,1)} )^{-1}\Omega^{(2,1)}_1 \right]^{[\geq l,l]}\left[\phi_1(z_2)\right]^{[l]}\\
	=L_{\mathfrak C}^{(1)}(z_2)\tilde K^{(2,1),[l]}(\bar z_1,z_2)-\overline{L_\Gamma^{(2)}(z_1)} K^{[l]}(\bar z_1,z_2),
	\end{multlined}
	\end{align*}
	and \eqref{GerKerNor2} follows.
\end{proof}
\begin{proof}[Proof of Proposition \ref{Geronimus Connection mixed}]
The following equations
	\begin{align*}
	(C_2(x_1))^\dagger \big(\Omega^{(1,2)}_2\big)^\dagger \big(\tilde  H^{(1,2)}\big)^{-1}- L_\Gamma^{(1)}(\bar x_1)(\tilde C^{(1,2)}_2(x_1))^\dagger \big(\tilde  H^{(1,2)}\big)^{-1}&=-\prodint{\delta L_\Gamma^{(1)}(\bar x_1,z_1),\big(\tilde\phi^{(1,2)}_2( z_2)\big)^\top}_{\tilde u^{(1)}}\big(\tilde H^{(1,2)}\big)^{-1},
	\end{align*}
	\begin{align*}
	\big(\Omega^{(1,2)}_2\big)^\dagger(\tilde  H^{(1,2)})^{-1} \tilde\phi_1^{(1,2)}(x_2) &= L_\Gamma^{(1)}(x_2)H^{-1}\phi_1(x_2),
	\end{align*}
imply
	\begin{align*}
	&\begin{multlined}[t][0.9\textwidth]
	\left[	(C_2(x_1))^\dagger \right]^{[l]}\left[\big(\Omega^{(1,2)}_2\big)^\dagger\big(\tilde  H^{(1,2)}\big)^{-1} \right]^{[l]}	[\tilde \phi^{(1,2)}_{1}(x_2)]^{[l]}+	\left[	(C_2(x_1))^\dagger \right]^{[\geq l]}\left[\big(\Omega^{(1,2)}_2\big)^\dagger\big(\tilde  H^{(1,2)}\big)^{-1} \right]^{[\geq l, l]}	[\tilde \phi^{(1,2)}_{1}(x_2)]^{[l]}\\
	-L_\Gamma^{(1)}(\bar x_1)\left[(\tilde C^{(1,2)}_2(x_1))^\dagger\right]^{[l]}\left[\big(\tilde H^{(1,2)}\big)^{-1}\right]^{[l]}	[\tilde \phi^{(1,2)}_{1}(x_2)]^{[l]}\\=
	-\prodint{\delta L_\Gamma^{(1)}(\bar x_1,z_1),\big(\left[\tilde\phi^{(1,2)}_2( z_2)\right]^{[l]}\big)^\top}_{\tilde u^{(1,2)}}\left[\big(\tilde H^{(1,2)}\big)^{-1}\right]^{[l]}	[\tilde \phi^{(1,2)}_{1}(x_2)]^{[l]},
	\end{multlined}\\
	&\begin{multlined}[t][0.9\textwidth]
	[C_{2}(x_1)^{\dagger}]^{[l]}\left[	\big(\Omega^{(1,2)}_2\big)^\dagger(\tilde H^{(1,2)})^{-1} \right]^{[l]}\left[\tilde\phi_1^{(1,2)}(x_2) \right]^{[l]}+[C_{2}(x_1)^{\dagger}]^{[l]}
	\left[	\big(\Omega^{(1,2)}_2\big)^\dagger(\tilde H^{(1,2)})^{-1} \right]^{[l,\geq l]}\left[\tilde\phi_1^{(1,2)}(x_2)\right] ^{[\geq l]}\\
	= L_\Gamma^{(1)}(x_2)[C_{2}(x_1)^{\dagger}]^{[l]}\left[H^{-1}\right]^{[l]}\left[\phi_1(x_2)\right]^{[l]},
	\end{multlined}
	\end{align*}
	and, consequently,
	\begin{align*}
	\begin{multlined}[t][0.9\textwidth]
	\left[(\tilde\phi_1^{(1,2)}(x_2))^\top\right] ^{[\geq l]}
	\left[	({\tilde H}^{(1,2)})^{-1}\big(\bar \Omega^{(1,2)}_2\big) \right]^{[\geq l,l]}[\overline{C_{2}(x_1)}]^{[l]}+	L_\Gamma^{(1)}(\bar x_1)\tilde K_{C_2}^{(1,2),[l]}(\bar x_1,x_2)\\= \left[(\tilde\phi_1^{(1,2)}(x_2))^\top\right] ^{[l]}
	\left[	({\tilde H}^{(1,2)})^{-1}\big(\bar \Omega^{(1,2)}_2\big) \right]^{[l,\geq l]}[\overline{C_{2}(x_1)}]^{[\geq l]}+
	\prodint{\delta L_\Gamma^{(1)}(\bar x_1,z_1), \overline{\tilde K^{(1,2),[l]}(\bar z_2,x_2)}}_{\tilde u^{(1)}}
	\\+ L_\Gamma^{(1)}(x_2)K_{C_2}^{[l]}(\bar x_1,x_2).
	\end{multlined}
	\end{align*}
	and \eqref{mix CF1} follows.
	Now, to check \eqref{mix CF2}, observe that
	\begin{align*}
	\big(\tilde C^{(2,1)}_2(x_1)\big)^\dagger (\tilde H^{(2,1)} )^{-1}\Omega^{(2,1)}_1 &=\prodint{\frac{L_{\mathfrak C}^{(1)}(z_1)}{\bar x_1-z_1},\big(\phi_2(z_2)\big)^\top}_{u} H^{-1},\\
	(\tilde H^{(2,1)})^{-1}\Omega_1^{(2,1)} \phi_1(x_2)&=L_{\mathfrak C}^{(1)}(x_2)(\tilde H^{(2,1)})^{-1}\tilde\phi^{(2,1)}_1(x_2).
	\end{align*}
Thus,
	\begin{align*}
	&\begin{multlined}[t][0.9\textwidth]\left[	\big(\tilde C^{(2,1)}_2(x_1)\big)^\dagger\right] ^{[l]}\left[(\tilde H^{(2,1)} )^{-1}\Omega^{(2,1)}_1\right]^{[l]}\left[\phi_1(x_2)\right]^{[l]}+
	\left[	\big(\tilde C^{(2,1)}_2(x_1)\big)^\dagger\right] ^{[\geq l]}	\left[(\tilde H^{(2,1)})^{-1}\Omega_1^{(2,1)}\right]^{[\geq l,l]} \left[\phi_1(x_2)\right]^{[ l]}\\=\prodint{\frac{L_{\mathfrak C}^{(1)}(z_1)}{\bar x_1-z_1},\big[\big(\phi_2(z_2)\big)^\top\big]^{[l]} \left[H^{-1}\right]^{[l]}}_{u}\left[\phi_1(x_2)\right]^{[l]},
	\end{multlined}\\
	&\begin{multlined}[t][0.9\textwidth]
	\left[	\big(\tilde C^{(2,1)}_2(x_1)\big)^\dagger\right] ^{[l]}\left[	(\tilde H^{(2,1)})^{-1}\Omega_1^{(2,1)}\right]^{[l]} \left[\phi_1(x_2)\right]^{[l]}+\left[	\big(\tilde C^{(2,1)}_2(x_1)\big)^\dagger\right] ^{[l]}\left[	(\tilde H^{(2,1)})^{-1}\Omega_1^{(2,1)}\right]^{[l,\geq l]} \left[\phi_1(x_2)\right]^{[\geq l]}\\
	=L_{\mathfrak C}^{(1)}(x_2)\left[	\big(\tilde C^{(2,1)}_2(x_1)\big)^\dagger\right] ^{[l]}\left[(\tilde H^{(2,1)})^{-1}\right]^{[l]}\left[\tilde\phi^{(2,1)}_1(x_2)\right]^{[l]},
	\end{multlined}
	\end{align*}
	and we deduce
	\begin{align*}
	\begin{multlined}[t][0.9\textwidth]
	\prodint{\frac{L_{\mathfrak C}^{(1)} (z_1)}{\bar x_1-z_1},\overline{K^{[l]}(\bar z_2,x_2)}}_{u}-
	\left[	\big(\tilde C^{(2,1)}_2(x_1)\big)^\dagger\right] ^{[\geq l]}	\left[(\tilde H^{(2,1)})^{-1}\Omega_1^{(2)}\right]^{[\geq l, l]} \left[\phi_1(x_2)\right]^{[ l]}=L_{\mathfrak C}^{(1)} (x_2)\tilde K_{C_2}^{(2),[l]}(\bar x_1,x_2)\\
	-\left[	\big(\tilde C^{(2,1)}_2(x_1)\big)^\dagger\right] ^{[l]}	\left[(\tilde  H^{(2,1)})^{-1}\Omega_1^{(2,1)}\right]^{[l,\geq l]} \left[\phi_1(x_2)\right]^{[ \geq l]}.
	\end{multlined}
	\end{align*}
	
	To get \eqref{mix FC1} we notice that
	\begin{align*}
	(\phi_2(x_1))^\dagger
\big(\Omega^{(1,2)}_2 \big)^\dagger({\tilde H}^{(1,2)})^{-1}&=\overline{L_{\mathfrak C}^{(2)}(x_1)}\big(\tilde\phi^{(1,2)}_2(x_1)\big)^\dagger({\tilde H}^{(1,2)})^{-1},\\
	\big(\Omega^{(1,2)}_2\big)^\dagger(\tilde H^{(1,2)})^{-1}\tilde C^{(1,2)}_1(x_2)&=
	H^{-1}\prodint{\phi_1(z_1),\frac{L_{\mathfrak C}^{(2)}(z_2)}{\bar x_2-z_2}}_{u},
	\end{align*}
	and, therefore,
	\begin{align*}
	&\begin{multlined}[t][0.9\textwidth]
	\left[(\phi_2(x_1))^\dagger\right]^{[l]}   \left[\big(\Omega^{(1,2)}_2\big)^\dagger(\tilde  H^{(1,2)})^{-1}\right]^{[l]}\left[\tilde C^{(1,2)}_1(x_2)\right]^{[l]}+
	\left[(\phi_2(x_1))^\dagger\right]^{[\geq l]}    \left[\big(\Omega^{(1,2)}_2\big)^\dagger(\tilde  H^{(1,2)})^{-1}\right]^{[\geq l, l]}\left[\tilde C^{(1,2)}_1(z)\right]^{[l]}\\=
	\overline{L_{\mathfrak C}^{(2)}(x_1)}\left[(\tilde \phi^{(1,2)}_2(x_1))^\dagger\right]^{[l]}   \left[(\tilde H^{(1,2)})^{-1}\right]^{[l]}\left[\tilde C^{(1,2)}_1(x_2)\right]^{[l]}
	\end{multlined},\\
	&\begin{multlined}[t][0.9\textwidth]
	\left[(\phi_2(x_1))^\dagger\right]^{[l]}\left[ \big(\Omega^{(1,2)}_2 \big)^\dagger({\tilde H}^{(1)})^{-1}\right]^{[l]}\left[\tilde C^{(1,2)}_{1}(x_2)\right]^{[l]}+	\left[(\phi_2(x_1))^\dagger\right]^{[l]}\left[ \big(\Omega^{(1,2)}_2 \big)^\dagger({\tilde H}^{(1,2)})^{-1}\right]^{[l,\geq l]}\left[\tilde C^{(1,2)}_{1}(x_2)\right]^{[\geq l]}\\
	=\left[\big(\phi_2(x_1)\big)^\dagger\right]^{[l]}\prodint{\left[(H)^{-1}\right]^{[l]}\big[\phi_1(z_1)\big]^{[l]},\frac{L_{\mathfrak C}^{(2)}(z_2)}{\bar x_2-z_2}}_{u},
	\end{multlined}
	\end{align*}
so that
	\begin{align*}
	&\begin{multlined}[t][0.9\textwidth]
	\left[(\tilde C^{(1,2)}_1(x_2))^\top\right]^{[ l]}
	\left[(\tilde H^{(1,2)})^{-1}\bar\Omega^{(1,2)}_2\right]^{[ l,\geq l]}\left[\overline{(\phi_2(x_1))}\right]^{[\geq l]}  -\left[(\tilde C^{(1,2)}_1(x_2))^\top\right]^{[\geq l]}
	\left[(\tilde H^{(1,2)})^{-1}\bar\Omega^{(1,2)}_2\right]^{[\geq l,l]}\left[\overline{(\phi_2(x_1))}\right]^{[l]} \\
	  =	\overline{L_{\mathfrak C}^{(2)}(x_1)}\tilde K_{C_1}^{(1,2),[l]}(\bar x_1,x_2)-\prodint{K^{[l]}(\bar x_1,z_1),\frac{L_{\mathfrak C}^{(2)}(z_2)}{\bar x_2-z_2}}_{u}.
	\end{multlined}
	\end{align*}
	
	Finally,  we prove \eqref{mix FC2}. For that aim   consider
	\begin{align*}
	(\tilde H^{(2,1)})^{-1}	\Omega^{(2,1)}_1 C_{1}(x_2)- \overline{L_\Gamma^{(2)}}(x_2)(\tilde H^{(2,1)})^{-1}	\tilde C_{1}^{(2,1)}(x_2)&=-(\tilde H^{(2,1)})^{-1}	\prodint{ \tilde\phi^{(2,1)}_1(z_1), \delta L^{2}_\Gamma(\bar x_2,z_2)}_{ \tilde u^{(2,1)} },\\
	(\tilde\phi^{(2,1)}_2(x_1))	^\dagger (\tilde H^{(2,1)})^{-1}\Omega^{(2,1)}_1&=\overline{L_\Gamma^{(2)}(x_1)} (\phi_2(x_1))^\dagger H^{-1},\\
	\end{align*}
and as a consequence we find
	\begin{align*}
	&\begin{multlined}[t][0.9\textwidth]
	\left[(\tilde\phi^{(2,1)}_2(x_1))^\dagger\right]^{[l]}\left[(\tilde H^{(2,1)})^{-1}\Omega^{(2,1)}_1 \right]^{[l]}\left[C_{1}(x_2)\right]^{[l]}+\left[(\tilde\phi^{(2,1)}_2(x_1))^\dagger\right]^{[l]}\left[(\tilde H^{(2,1)})^{-1}\Omega^{(2,1)}_1 \right]^{[l,\geq l]}\left[C_{1}(x_2)\right]^{[\geq l]}\\
	-\overline{L_\Gamma^{(2)}}(x_2)\left[(\tilde\phi^{(2,1)}_2(x_1))^\dagger\right]^{[l]}\left[(\tilde H^{(2,1)})^{-1}\right]^{[l]}\left[\tilde C_{1}^{(2)}(x_2)\right]^{[l]}\\=-\left[(\tilde\phi^{(2,1)}_2(x_1))^\dagger\right]^{[l]}\prodint{\left[(\tilde H^{(2,1)})^{-1}	\right]^{[l]} \left[\tilde\phi^{(2,1)}_1(z_1)\right]^{[l]},\delta L_\Gamma^{(2)}(\bar x_2,z_2)}_{ \tilde u^{(2,1)} },
	\end{multlined}\\
	&\begin{multlined}[t][0.9\textwidth]
	\left[(\tilde\phi^{(2,1)}_2(x_1))	^\dagger \right]^{[l]}\left[(\tilde H^{(2,1)})^{-1}\Omega^{(2,1)}_1\right]^{[l]}\left[C_{1}(x_2)\right]^{[l]}+\left[(\tilde\phi^{(2,1)}_2(x_1))	^\dagger \right]^{[\geq l]}\left[(\tilde H^{(2,1)})^{-1}\Omega^{(2,1)}_1\right]^{[\geq l,l]}\left[C_{1}(x_2)\right]^{[l]}
	\\=\overline{L_\Gamma^{(2)}(x_1)}\left[ (\phi_2(x_1))^\dagger\right]\left[ H^{-1}\right]^{[l]}\left[C_{1}(x_2)\right]^{[l]},
	\end{multlined}
	\end{align*}
	that gives
	\begin{multline*}
	\left[(\tilde\phi^{(2,1)}_2(x_1))^\dagger \right]^{[\geq l]}\left[(\tilde H^{(2,1)})^{-1}\Omega^{(2,1)}_1\right]^{[\geq l,l]}\left[C_{1}(x_2)\right]^{[l]}-\left[(\tilde\phi^{(2,1)}_2(x_1))^\dagger \right]^{[l]}\left[(\tilde H^{(2,1)})^{-1}\Omega^{(2,1)}_1\right]^{[l,\geq l]}\left[C_{1}(x_2)\right]^{[\geq l]}\\
	=\overline{L_\Gamma^{(2)}(x_1)} K_{C_1}^{[l]}(\bar x_1,x_2)- \overline{L_\Gamma^{(2)}}(x_2)\tilde K_{C_1}^{(2,1),[l]}(\bar x_1,x_2)
	+\prodint{ \tilde K^{(2,1),[l]}(\bar x_1,z_1), \delta L_\Gamma^{(2)}(\bar x_2,z_2)}_{\tilde u^{(2,1)}}.
	\end{multline*}
\end{proof}

\begin{proof}[Proof of Proposition \ref{pro.chorros}]
First, let us show that, for  $l=0,\dots,m^{(2)}_{j}-1$ we have
	\begin{align*}
	\frac{\d^l}{\d z^l}\bigg|_{z=\bar \zeta^{(2)}_{j}}
	\prodint{\tilde\phi^{(2,1)}_{1,k}(z_1),\frac{L_\Gamma^{(2)}(\bar z)}{\bar z-z_2}}_{\frac{L_{\mathfrak C}^{(1)}(z_1)}{\overline{(L_\Gamma^{(2)}(z_2))}}u}=0.
	\end{align*}
	This is a consequence of
	\begin{gather*}
	\frac{\d^l}{\d z^l}\bigg|_{ z=\bar\zeta^{(2)}_{j}}
	\prodint{\tilde\phi^{(2,1)}_{1,k}(z_1),\frac{L_\Gamma^{(2)}(\bar z)}{\bar z-z_2}}_{\frac{L_{\mathfrak C}^{(1)}(z_1)}{\overline{(L_\Gamma^{(2)}(z_2))}}u}=
	\prodint{u_{z_1,\bar z_2},L_{\mathfrak C}^{(1)}(z_1)\tilde\phi^{(2,1)}_{1,k}(z_1)
		\otimes(\overline{L^{(2)}_\Gamma(z_2)})^{-1}\frac{\d^l}{\d z^l}\bigg|_{z=\bar \zeta^{(2)}_{j}}\frac{ \overline{L_\Gamma^{(2)}}( z)}{z-\bar z_2}}\\
	=\sum_{\nu=0}^{l}\binom{l}{\nu}\prodint{u_{z_1,\bar z_2},L_{\mathfrak C}^{(1)}(z_1)\tilde\phi^{(2,1)}_{1,k}(z_1)\otimes(\overline{L_\Gamma^{(2)}(z_2)})^{-1}\left(\frac{\d^{\nu}	
			\overline{ L_\Gamma^{(2)}}( z)}{\d z^\nu}\bigg|_{z=\bar\zeta^{(2)}_{j}}
		\frac{\d^{l-\nu}}{\d z^{l-\nu}}\bigg|_{z=\bar \zeta^{(2)}_{j}}\frac{1}{z-\bar z_2}\right)},
	\end{gather*}
	but $\dfrac{\d^\nu	 \overline{L_\Gamma^{(2)}}( z)}{\d z^\nu}\bigg|_{z=\bar\zeta^{(2)}_{j}}=0$ for  $\nu\in\{0,\dots,m^{(2)}_{j}-1\}$, and
	since $\operatorname{supp}_2(u)\cap \overline{\sigma(L_\Gamma^{(2)})}=\varnothing$, we get 	
\eqref{caleGer}.  Thus,
	\begin{align*}
	\frac{\d^l}{\d z^l}\bigg|_{z=\bar \zeta^{(2)}_{j}} \overline{L_\Gamma^{(2)}}(z)\tilde C_{1,k}^{(2,1)}(z)&=\sum_{i=1}^{d^{(2)}}\prodint{L^{(1)}_{\mathfrak C}\xi^{(2)}_{i},	\tilde\phi^{(2,1)}_{1,k}}\frac{\d^l}{\d z^l}\bigg|_{z=\bar \zeta^{(2)}_{j}}	 \overline{L^{(2)}_{\Gamma,[i]}}(z)\begin{bmatrix}
	(z-\bar \zeta^{(2)}_{i})^{m^{(2)}_i-1}\\ (z-\bar \zeta^{(2)}_{i})^{m^{(2)}_i-2}\\\vdots\\1
	\end{bmatrix}.
	\end{align*}
	For $l>0$,  we have
	\begin{align*}
	\frac{\d^l}{\d z^l}\bigg|_{z=\bar\zeta_{2,j}}	 \overline{L^{(2)}_{\Gamma,[i]}}(z)(z-\bar \zeta^{(2)}_{i})^{m}
	&=\sum_{\nu=0}^{m}\binom{l}{m-\nu} \Big(\frac{\d^{l-m+\nu}}{\d z^{l-m+\nu}}\bigg|_{z=\bar\zeta^{(2)}_{j}}	 \overline{L^{(2)}_{\Gamma,[i]}}(z)\Big)
	\frac{m!}{(m-\nu)!}\big(\bar \zeta^{(2)}_{j}-\bar \zeta^{(2)}_{i}\big)^{\nu}\\
	&=\sum_{\nu=0}^{m}
	\sum_{\sigma=1}^{l-m+\nu}
	\frac{l!m!}{\sigma!(m-\nu)!}
	\overline{(L^{(2)}_{\Gamma,[i]})^{(\sigma)}(\zeta^{(2)}_{j})}
	\big(\bar \zeta^{(2)}_{j}-\bar \zeta^{(2)}_{i}\big)^{\nu}.
	\end{align*}		
	But, if $i\neq j$,  $(L^{(2)}_{\Gamma,[i]})^{(\sigma)}(\zeta^{(2)}_{j})=0$ for $\sigma\in\{0,1,\dots,m^{(2)}_j-1\}$, which is our case because  $l\in\{0,1,\dots,m^{(2)}_{j}-1\}$; when $i=j$ we get that only  terms with $ \nu= 0 $ will survive and, therefore, $m\leq l$ with
	\begin{align*}
	\frac{1}{l!}	\frac{\d^l}{\d z^l}\bigg|_{z=\bar\zeta^{(2)}_{j}}	 \overline{L^{(2)}_{\Gamma,[j]}}(z)(z-\bar \zeta^{(2)}_{j})^{m}
	&=\ell^{(2)}_{{[j]},l-m}.
	\end{align*}
	
	To show \eqref{chorro (1)}	let's compute
	$\dfrac{1}{l!}\dfrac{\d^l}{\d \bar z^l}\bigg|_{\bar z=\zeta^{(1)}_{j}}\overline{	\overline{ L_\Gamma^{(1)}}(z)\tilde C_{2,k}^{(1,2)}(z)}$ with $l\in\{0,1,\dots,m^{(1)}_{j}-1\}$.  For that aim we evaluate
	\begin{multline*}
	\dfrac{\d^l}{\d \bar z^l}\bigg|_{\bar z=\zeta^{(1)}_{j}}\prodint{\frac{L_\Gamma^{(1)}(\bar z)}{\bar z-z_1}, \tilde \phi^{(1,2)}_{2,k}(z_2)}_{\frac{\overline{(L_{\mathfrak C}^{(1)}(z_2))}}{L_\Gamma^{(1)}(z_1)} u}=
	\prodint{\dfrac{\d^l}{\d \bar z^l}\bigg|_{\bar z=\zeta^{(1)}_{j}}\frac{L_\Gamma^{(1)}(\bar z)}{\bar z-z_1}, \tilde \phi^{(1,2)}_{2,k}(z_2)}_{\frac{\overline{(L_{\mathfrak C}^{(1)}(z_2))}}{L_\Gamma^{(1)}(z_1)} u}\\
	=\sum_{k=0}^l\binom{l}{k} (l-k)!\dfrac{\d^{k} L_\Gamma^{(1)}(\bar z)}{\d \bar z^{k}}\bigg|_{\bar z=\zeta^{(1)}_{j}}(-1)^{l-k}\prodint{\frac{1}{(\zeta^{(1)}_{j}-z_1)^{l-k+1}}, \tilde \phi^{(1,2)}_{2,k}(z_2)}_{\frac{\overline{(L_{\mathfrak C}^{(1)}(z_2))}}{L_\Gamma^{(1)}(z_1)} u},
	\end{multline*}
	that,  remembering that the zeros are not in support of the linear functional, vanishes. Finally we realize, that
	\begin{align*}
	\frac{1}{l!}\dfrac{\d^l}{\d \bar z^l}\bigg|_{\bar z=\zeta^{(1)}_{j}}L^{(1)}_{\Gamma,[i]}(\bar z) (\bar z-\zeta_i^{(1)})^m&=
	\sum_{k=0}^{l} \binom{l}{k}\dfrac{\d^kL^{(1)}_{\Gamma,[i]}(\bar z)}{\d \bar z^k}\bigg|_{\bar z=\zeta^{(1)}_{j}}\frac{m!}{(l-k)!}(\zeta^{(1)}_{j}-\zeta^{(1)}_{i})^{m-l+k}\\
	&=\begin{cases}
	0, & i\neq j,\\
	{	\bar \ell^{(1)}_{[j],l-m}},& i=j.
	\end{cases}
	\end{align*}
\end{proof}

\begin{proof}[Proof of Theorem \ref{Christoffel-Geronimus formulas}]
 From  \eqref{CC21k>} we get
	\begin{align*}
	\mathcal J^{\overline{L_\Gamma^{(2)}}}_{ \overline{L_\Gamma^{(2)}}\tilde  C^{(2,1)}_{{1,l}}} &=\big[
	(\Omega^{(2,1)}_1)_{l,l-2 N_\Gamma},\dots,(\Omega^{(2,1)}_1)_{l,l+2r-1} \big]
	\begin{bmatrix}
	\mathcal J^{\overline{L_\Gamma^{(2)}}}_{ C_{1,l-2 N_\Gamma}}\\\vdots \\ \mathcal J^{\overline{L_\Gamma^{(2)}}}_{C_{1,l+2 N_{\mathfrak C}-1}}
	\end{bmatrix}+(\Omega^{(2,1)}_1)_{l,l+2 N_{\mathfrak C}}\mathcal J^{\overline{L_\Gamma^{(2)}}}_{C_{1,l+2 N_{\mathfrak C}}}.
	\end{align*}
	Using \eqref{caleGer} and
	\begin{align}\label{conexion}
	(\Omega_1^{(2)})_{l,l-2 N_\Gamma}
	\phi_{1,l-2 N_\Gamma}(z)+\dots +(\Omega_1^{(2)})_{l,l+2 N_{\mathfrak C}}\phi_{1,l+2 N_{\mathfrak C}}(z)=L_{\mathfrak C}^{(1)}(z)\tilde\phi^{(2,1)}_1(z),
	\end{align}
	see Proposition \ref{Geronimus Connection Laurent}, we deduce that
	\begin{align*}
	\mathcal J^{\overline{L_\Gamma^{(2)}}}_{\overline{L_\Gamma^{(2)}}\tilde C^{(2,1)}_{{1,l}}} &=\big[
	(\Omega^{(2,1)}_1)_{l,l-2 N_\Gamma},\dots,(\Omega^{(2,1)}_1)_{l,l+2 N_{\mathfrak C}-1} \big]
	\begin{bmatrix}
	\prodint{\xi^{(2)},\phi_{1,l-2 N_\Gamma}}\\\vdots \\ \prodint{\xi^{(2)},\phi_{1,l+2 N_{\mathfrak C}-1}}
	\end{bmatrix}
	\mathcal L^{(2)}+(\Omega^{(2,1)}_1)_{l,l+2 N_{\mathfrak C}}\prodint{\xi^{(2)},\phi_{1,l+2 N_{\mathfrak C}}}\mathcal L^{(2)},
	\end{align*}
	and, consequently,
	\begin{multline*}
	\big[
	(\Omega^{(2,1)}_1)_{l,l-2 N_\Gamma},\dots,(\Omega^{(2,1)}_1)_{l,l+2 N_{\mathfrak C}-1} \big]\begin{bmatrix}
	\mathcal J^{\overline{L_\Gamma^{(2)}}}_{ C_{1,l-2 N_\Gamma}}-\prodint{\xi^{(2)},\phi_{1,l-2 N_\Gamma}}\mathcal L^{(2)}\\\vdots \\
	\mathcal J^{\overline{L_\Gamma^{(2)}}}_{ C_{1,l+2 N_{\mathfrak C}-1}}-\prodint{\xi^{(2)},\phi_{1,l+2 N_{\mathfrak C}-1}}\mathcal L^{(2)}
	\end{bmatrix}\\=-(\Omega^{(2,1)}_1)_{l,l+2 N_{\mathfrak C}}\bigg( \mathcal J^{\overline{L_\Gamma^{(2)}}}_{C_{1,l+2 N_{\mathfrak C}}}-\prodint{\xi^{(2)},\phi_{1,l+2 N_{\mathfrak C}}}\mathcal L^{(2)}\bigg).
	\end{multline*}
	Taking spectral jets along $L_{\mathfrak C}^{(1)}(z)$ of \eqref{conexion} we get
	\begin{align*}
	\big[
	(\Omega^{(2,1)}_1)_{l,l-2 N_\Gamma},\dots,(\Omega^{(2,1)}_1)_{l,l+2 N_{\mathfrak C}-1} \big]\begin{bmatrix}
	\mathcal J^{{L_{\mathfrak C}^{(1)}}}_{ \phi_{1,l-2 N_\Gamma}}\\\vdots \\
	\mathcal J^{{L_{\mathfrak C}^{(1)}}}_{ \phi_{1,l+2 N_{\mathfrak C}-1}}
	\end{bmatrix}=-(\Omega^{(2,1)}_1)_{l,l+2 N_{\mathfrak C}}\mathcal J^{{L_{\mathfrak C}^{(1)}}}_{\phi_{1,l+2 N_{\mathfrak C}}}.
	\end{align*}
	The previous two relations leads to
		\begin{multline*}
	\big[
	(\Omega^{(2,1)}_1)_{l,l-2 N_\Gamma},\dots,(\Omega^{(2,1)}_1)_{l,l+2 N_{\mathfrak C}-1} \big]\begin{bmatrix}
	\mathcal J^{\overline{L_\Gamma^{(2)}}}_{ C_{1,l-2 N_\Gamma}}-\prodint{\xi^{(2)},\phi_{1,l-2 N_\Gamma}}\mathcal L^{(2)} & \mathcal J^{{L_{\mathfrak C}^{(1)}}}_{ \phi_{1,l-2 N_\Gamma}}\\\vdots  &\vdots \\
	\mathcal J^{\overline{L_\Gamma^{(2)}}}_{ C_{1,l+2 N_{\mathfrak C}-1}}-\prodint{\xi^{(2)},\phi_{1,l+2 N_{\mathfrak C}-1}}\mathcal L^{(2)}& 	\mathcal J^{{L_{\mathfrak C}^{(1)}}}_{ \phi_{1,l+2 N_{\mathfrak C}-1}}
	\end{bmatrix}\\=-(\Omega^{(2,1)}_1)_{l,l+2 N_{\mathfrak C}}\bigg[\mathcal J^{\overline{L_\Gamma^{(2)}}}_{C_{1,l+2 N_{\mathfrak C}}}-\prodint{\xi^{(2)},\phi_{1,l+2 N_{\mathfrak C}}}\mathcal L^{(2)},\mathcal J^{{L_{\mathfrak C}^{(1)}}}_{\phi_{1,l+2 N_{\mathfrak C}}}\bigg].
	\end{multline*}
	and we have
			\begin{multline*}
	\big[
	(\Omega^{(2,1)}_1)_{l,l-2 N_\Gamma},\dots,(\Omega^{(2,1)}_1)_{l,l+2 N_{\mathfrak C}-1} \big]\\=-L^{(1)}_{\mathfrak C,(-1)^{l} N_{\mathfrak C}}\bigg[\mathcal J^{\overline{L_\Gamma^{(2)}}}_{C_{1,l+2 N_{\mathfrak C}}}-\prodint{\xi^{(2)},\phi_{1,l+2 N_{\mathfrak C}}}\mathcal L^{(2)},\mathcal J^{{L_{\mathfrak C}^{(1)}}}_{\phi_{1,l+2 N_{\mathfrak C}}}\bigg]\begin{bmatrix}
	\mathcal J^{\overline{L_\Gamma^{(2)}}}_{ C_{1,l-2 N_\Gamma}}-\prodint{\xi^{(2)},\phi_{1,l-2 N_\Gamma}}\mathcal L^{(2)} & \mathcal J^{{L_{\mathfrak C}^{(1)}}}_{ \phi_{1,l-2 N_\Gamma}}\\\vdots  &\vdots \\
	\mathcal J^{\overline{L_\Gamma^{(2)}}}_{ C_{1,l+2 N_{\mathfrak C}-1}}-\prodint{\xi^{(2)},\phi_{1,l+2 N_{\mathfrak C}-1}}\mathcal L^{(2)}& 	\mathcal J^{{L_{\mathfrak C}^{(1)}}}_{ \phi_{1,l+2 N_{\mathfrak C}-1}}
	\end{bmatrix}^{-1}.
	\end{multline*}
		Recalling  \eqref{conexion}  we conclude with the proof of this Christoffel formula. From this  equation, we also obtain
	\begin{multline*}
\hspace*{-1cm}	(\Omega^{(2,1)}_1)_{l,l-2 N_\Gamma}\\\hspace*{-1cm}	=-L^{(1)}_{\mathfrak C,(-1)^{l} N_{\mathfrak C}}\bigg[\mathcal J^{\overline{L_\Gamma^{(2)}}}_{C_{1,l+2 N_{\mathfrak C}}}-\prodint{\xi^{(2)},\phi_{1,l+2 N_{\mathfrak C}}}\mathcal L^{(2)},\mathcal J^{{L_{\mathfrak C}^{(1)}}}_{\phi_{1,l+2 N_{\mathfrak C}}}\bigg]\begin{bmatrix}
	\mathcal J^{\overline{L_\Gamma^{(2)}}}_{ C_{1,l-2 N_\Gamma}}-\prodint{\xi^{(2)},\phi_{1,l-2 N_\Gamma}}\mathcal L^{(2)} & \mathcal J^{{L_{\mathfrak C}^{(1)}}}_{ \phi_{1,l-2 N_\Gamma}}\\\vdots  &\vdots \\
	\mathcal J^{\overline{L_\Gamma^{(2)}}}_{ C_{1,l+2 N_{\mathfrak C}-1}}-\prodint{\xi^{(2)},\phi_{1,l+2 N_{\mathfrak C}-1}}\mathcal L^{(2)}& 	\mathcal J^{{L_{\mathfrak C}^{(1)}}}_{ \phi_{1,l+2 N_{\mathfrak C}-1}}
	\end{bmatrix}^{-1}\begin{bmatrix}
	1\\0\\\vdots\\0
	\end{bmatrix}
	\end{multline*}
	and recall \eqref{Omegalambda}.
Thus, in terms of last quasideterminants \cite{gelfand,Olver,toledano} we find
\begin{align*}
	\tilde\phi^{(2,1)}_{1,l}(z)
	&=
\frac{	L^{(1)}_{\mathfrak C,(-1)^{l} N_{\mathfrak C}}}{L_{\mathfrak C}^{(1)}(z)}\Theta_*\begin{bmatrix}
	\mathcal J^{\overline{L_\Gamma^{(2)}}}_{ C_{1,l-2 N_\Gamma}}-\prodint{\xi^{(2)},\phi_{1,l-2 N_\Gamma}}\mathcal L^{(2)} & \mathcal J^{{L_{\mathfrak C}^{(1)}}}_{ \phi_{1,l-2 N_\Gamma}}& \phi_{1,l-2 N_\Gamma}(z)\\\vdots  &\vdots&\vdots \\
	\mathcal J^{\overline{L_\Gamma^{(2)}}}_{ C_{1,l+2 N_{\mathfrak C}}}-\prodint{\xi^{(2)},\phi_{1,l+2 N_{\mathfrak C}}}\mathcal L^{(2)}& 	\mathcal J^{{L_{\mathfrak C}^{(1)}}}_{ \phi_{1,l+2 N_{\mathfrak C}}}&\phi_{1,l+2 N_{\mathfrak C}}(z)
	\end{bmatrix},
 	\end{align*}
	\begin{align*}
	\tilde H^{(2,1)}_{l}&=	 \dfrac{L^{(1)}_{\mathfrak C,(-1)^{l} N_{\mathfrak C}}}{ \overline{L^{(2)}_{\Gamma,(-1)^lN_\Gamma}}}H_{l-2 N_\Gamma}\Theta_*
	\begin{bmatrix}
	\mathcal J^{\overline{L_\Gamma^{(2)}}}_{ C_{1,l-2 N_\Gamma}}-\prodint{\xi^{(2)},\phi_{1,l-2 N_\Gamma}}\mathcal L^{(2)} & \mathcal J^{{L_{\mathfrak C}^{(1)}}}_{ \phi_{1,l-2 N_\Gamma}}&
1\\
		\mathcal J^{\overline{L_\Gamma^{(2)}}}_{ C_{1,l-2 N_\Gamma+1}}-\prodint{\xi^{(2)},\phi_{1,l-2 N_\Gamma+1}}\mathcal L^{(2)} & \mathcal J^{{L_{\mathfrak C}^{(1)}}}_{ \phi_{1,l-2 N_\Gamma+1}}&0\\
		\vdots  &\vdots&\vdots \\
	\mathcal J^{\overline{L_\Gamma^{(2)}}}_{ C_{1,l+2 N_{\mathfrak C}}}-\prodint{\xi^{(2)},\phi_{1,l+2 N_{\mathfrak C}}}\mathcal L^{(2)}& 	\mathcal J^{{L_{\mathfrak C}^{(1)}}}_{ \phi_{1,l+2 N_{\mathfrak C}}}&0
	\end{bmatrix},
	\end{align*}
that, expressing the quasideterminant as a quotient of determinants, gives \eqref{Ger12} and \eqref{GerH2}.
	
On the one hand we observe that \eqref{CC12k>}	implies
	\begin{align}
	\Big[(\Omega^{(1,2)}_2)_{l,l-2 N_\Gamma} , \dots, (\Omega^{(1,2)}_2)_{l,l+2 N_{\mathfrak C}-1} \Big]\begin{bmatrix}
	\mathcal J^{ \overline{L_\Gamma^{(1)}}}_{C_{2,l-2 N_\Gamma}}\\\vdots\\\mathcal J^{\overline{L_\Gamma^{(1)}}}_{C_{2,l+2 N_{\mathfrak C}-1}}
	\end{bmatrix}+(\Omega^{(1,2)}_2)_{l,l+2 N_{\mathfrak C}}\mathcal J^{\overline{L_\Gamma^{(1)}}}_{C_{2,l+2 N_{\mathfrak C}}}=\mathcal J^{\overline{L_\Gamma^{(1)}}}_{\overline{L_\Gamma^{(1)}}\tilde C^{(1,1)}_{2,l}}.
	\end{align}
	On the other hand,  from Proposition \ref{Geronimus Connection Laurent} we know that
	\begin{align}\label{conexion2}
	(\Omega^{(1,2)}_2)_{l,l-2 N_\Gamma}
	\phi_{2,l-2 N_\Gamma}(z)+\dots
	+(\Omega^{(1,2)}_2)_{l,l+2 N_{\mathfrak C}-1}
	\phi_{2,l+2 N_{\mathfrak C}-1}(z)+(\Omega^{(1,2)}_2)_{l,l+2 N_{\mathfrak C}}\phi_{2,l+2r}(z)=L_{\mathfrak C}^{(2)}(z)\tilde \phi_{2,l}^{(1,2)}(z),
	\end{align}
and taking into account \eqref{chorro (1)} we conclude
	\begin{multline}
	\Big[(\Omega^{(1,2)}_2)_{l,l-2 N_\Gamma} , \dots, (\Omega^{(1,2)}_2)_{l,l+2 N_{\mathfrak C}-1} \Big]\begin{bmatrix}
	\mathcal J^{\overline{L_\Gamma^{(1)}}}_{C_{2,l-2 N_\Gamma}}-{\prodint{\xi^{(1)},\phi_{2,l-2 N_\Gamma}}}{\mathcal L_\Gamma^{(1)}}\\\vdots\\\mathcal J^{\overline{L_\Gamma^{(1)}}}_{C_{2,l+2 N_{\mathfrak C}-1}}
	-{\prodint{\xi^{(1)},\phi_{2,l+2 N_{\mathfrak C}-1}}}{\mathcal L_\Gamma^{(1)}}\end{bmatrix}\\=-(\Omega_2^{(1,2)})_{l,l+2 N_{\mathfrak C}}\bigg(\mathcal J^{ \overline{L^{(1)}_\Gamma}}_{C_{2,l+2r}}-{\prodint{	\xi^{(1)},\phi_{2,l+2 N_{\mathfrak C}}}}{\mathcal L_\Gamma^{(1)}}\bigg).
	\end{multline}
	Now, the computation of  the spectral jet of \eqref{conexion2} along $L_{\mathfrak C}^{(2)}(z)$ leads to
		\begin{align*}
	\big[
	(\Omega^{(1,2)}_2)_{l,l-2 N_\Gamma},\dots,(\Omega^{(1,2)}_2)_{l,l+2 N_{\mathfrak C}-1} \big]\begin{bmatrix}
	\mathcal J^{{L_{\mathfrak C}^{(2)}}}_{ \phi_{2,l-2 N_\Gamma}}\\\vdots \\
	\mathcal J^{{L_{\mathfrak C}^{(2)}}}_{ \phi_{2,l+2 N_{\mathfrak C}-1}}
	\end{bmatrix}=-(\Omega^{(1,2)}_2)_{l,l+2 N_{\mathfrak C}}\mathcal J^{{L_{\mathfrak C}^{(2)}}}_{\phi_{2,l+2 N_{\mathfrak C}}}.
	\end{align*}
	Hence, if we gather all this information together,  we obtain
	\begin{multline*}
	\big[
	(\Omega^{(1,2)}_1)_{l,l-2 N_\Gamma},\dots,(\Omega^{(1,2)}_1)_{l,l+2 N_{\mathfrak C}-1} \big]\\=-L^{(2)}_{\mathfrak C,(-1)^{l} N_{\mathfrak C}}\bigg[\mathcal J^{\overline{L_\Gamma^{(1)}}}_{C_{2,l+2 N_{\mathfrak C}}}-\prodint{\xi^{(1)},\phi_{2,l+2 N_{\mathfrak C}}}\mathcal L^{(1)},\mathcal J^{{L_{\mathfrak C}^{(2)}}}_{\phi_{2,l+2 N_{\mathfrak C}}}\bigg]\begin{bmatrix}
	\mathcal J^{\overline{L_\Gamma^{(1)}}}_{ C_{2,l-2 N_\Gamma}}-\prodint{\xi^{(1)},\phi_{2,l-2 N_\Gamma}}\mathcal L^{(1)} & \mathcal J^{{L_{\mathfrak C}^{(2)}}}_{ \phi_{2,l-2 N_\Gamma}}\\\vdots  &\vdots \\
	\mathcal J^{\overline{L_\Gamma^{(1)}}}_{ C_{2,l+2 N_{\mathfrak C}-1}}-\prodint{\xi^{(1)},\phi_{2,l+2 N_{\mathfrak C}-1}}\mathcal L^{(1)}& 	\mathcal J^{{L_{\mathfrak C}^{(2)}}}_{ \phi_{2,l+2 N_{\mathfrak C}-1}}
	\end{bmatrix}^{-1},
	\end{multline*}
which as a byproduct offers
	\begin{multline}
	(\Omega^{(1,2)}_2)_{l,l-2 N_\Gamma}\\=-L^{(2)}_{\mathfrak C,(-1)^{l} N_{\mathfrak C}}\bigg[\mathcal J^{\overline{L_\Gamma^{(1)}}}_{C_{2,l+2 N_{\mathfrak C}}}-\prodint{\xi^{(1)},\phi_{2,l+2 N_{\mathfrak C}}}\mathcal L^{(1)},\mathcal J^{{L_{\mathfrak C}^{(2)}}}_{\phi_{2,l+2 N_{\mathfrak C}}}\bigg]\begin{bmatrix}
	\mathcal J^{\overline{L_\Gamma^{(1)}}}_{ C_{2,l-2 N_\Gamma}}-\prodint{\xi^{(1)},\phi_{2,l-2 N_\Gamma}}\mathcal L^{(1)} & \mathcal J^{{L_{\mathfrak C}^{(2)}}}_{ \phi_{2,l-2 N_\Gamma}}\\\vdots  &\vdots \\
	\mathcal J^{\overline{L_\Gamma^{(1)}}}_{ C_{2,l+2 N_{\mathfrak C}-1}}-\prodint{\xi^{(1)},\phi_{2,l+2 N_{\mathfrak C}-1}}\mathcal L^{(1)}& 	\mathcal J^{{L_{\mathfrak C}^{(2)}}}_{ \phi_{2,l+2 N_{\mathfrak C}-1}}
	\end{bmatrix}^{-1}\begin{bmatrix}
	1\\0\\\vdots\\0
	\end{bmatrix}.
	\end{multline}
	Therefore, we have proven the following last quasideterminantal expressions
\begin{align*}
	\tilde\phi^{(1,2)}_{2,l}(z)
&=
\frac{	L^{(2)}_{\mathfrak C,(-1)^{l} N_{\mathfrak C}}}{L_{\mathfrak C}^{(2)}(z)}\Theta_*\begin{bmatrix}
\mathcal J^{\overline{L_\Gamma^{(1)}}}_{ C_{2,l-2 N_\Gamma}}-\prodint{\xi^{(1)},\phi_{2,l-2 N_\Gamma}}\mathcal L^{(1)} & \mathcal J^{{L_{\mathfrak C}^{(2)}}}_{ \phi_{2,l-2 N_\Gamma}}& \phi_{2,l-2 N_\Gamma}(z)\\\vdots  &\vdots&\vdots \\
\mathcal J^{\overline{L_\Gamma^{(1)}}}_{ C_{2,l+2 N_{\mathfrak C}}}-\prodint{\xi^{(1)},\phi_{2,l+2 N_{\mathfrak C}}}\mathcal L^{(1)}& 	\mathcal J^{{L_{\mathfrak C}^{(2)}}}_{ \phi_{2,l+2 N_{\mathfrak C}}}&\phi_{2,l+2 N_{\mathfrak C}}(z)
\end{bmatrix},
	\end{align*}
	\begin{align*}
	\bar{	\tilde H}^{(1,2)}_{l}&=	 \dfrac{L^{(2)}_{\mathfrak C,(-1)^{l} N_{\mathfrak C}}}{ \overline{L^{(2)}_{\Gamma,(-1)^ln}}}\bar H_{l-2 N_\Gamma}\Theta_*
	\begin{bmatrix}
	\mathcal J^{\overline{L_\Gamma^{(1)}}}_{ C_{2,l-2 N_\Gamma}}-\prodint{\xi^{(1)},\phi_{2,l-2 N_\Gamma}}\mathcal L^{(1)} & \mathcal J^{{L_{\mathfrak C}^{(2)}}}_{ \phi_{2,l-2 N_\Gamma}}&
	1\\
	\mathcal J^{\overline{L_\Gamma^{(1)}}}_{ C_{2,l-2 N_\Gamma+1}}-\prodint{\xi^{(1)},\phi_{2,l-2 N_\Gamma+1}}\mathcal L^{(1)} & \mathcal J^{{L_{\mathfrak C}^{(2)}}}_{ \phi_{2,l-2 N_\Gamma+1}}&0\\
	\vdots  &\vdots&\vdots \\
	\mathcal J^{\overline{L_\Gamma^{(1)}}}_{ C_{2,l+2 N_{\mathfrak C}}}-\prodint{\xi^{(1)},\phi_{2,l+2 N_{\mathfrak C}}}\mathcal L^{(1)}& 	\mathcal J^{{L_{\mathfrak C}^{(2)}}}_{ \phi_{2,l+2 N_{\mathfrak C}}}&0
	\end{bmatrix},
	\end{align*}
	from where the determinantal formulas \eqref{Ger21} and \eqref{GerH1} follow immediately.

	We will prove  \eqref{Ger11} y \eqref{Ger22} simultaneously.
	Let's write   \eqref{mix CF1}  and  \eqref{mix FC2}  as follows
	\begin{align*}
	&\begin{multlined}[t][0.9\textwidth]
	\sum_{k=0}^{l-1}\overline{	 \overline{L_\Gamma^{(1)}}( x_1)\tilde  C^{(1,2)}_{2,k}(x_1)}(\tilde  H^{(1,2)}_{k})^{-1}\tilde\phi^{(1,2)}_{1,k}(x_2)-	 {L_\Gamma^{(1)}}(x_2)K_{C_2}^{[l]}(\bar x_1,x_2)-
	\delta {L_\Gamma^{(1)}}(\bar x_1,x_2)
	\\= {\Big[\tilde \phi^{(1,2)}_{1,l- 2N_{\mathfrak C}}(x_2)({\tilde H}_{l- 2N_{\mathfrak C}}^{(1,2)})^{-1},\dots,
		\tilde \phi^{(1,2)}_{1,l+ 2 N_\Gamma-1}(x_2)({\tilde H}_{l+ 2 N_\Gamma-1}^{(1,2)})^{-1}\Big]}
\begin{bmatrix}
0_{2 N_{\mathfrak C}\times 2 N_\Gamma}&
\overline{\mathfrak  C^{(1,2)}_{1,l}}\\
-\overline{\Gamma^{(1,2)}_{2,l}}&0_{2 N_\Gamma\times 2 N_{\mathfrak C}}
\end{bmatrix}
	\begin{bmatrix}
	\overline{ C_{2,l-2 N_\Gamma}(x_1)}\\ \vdots\\  \overline{
		C_{2,l+2 N_{\mathfrak C}-1}(x_1)}
	\end{bmatrix},
	\end{multlined}	
	\\
	&\begin{multlined}[t][0.9\textwidth]
	\sum_{k=0}^{l-1}\overline{\tilde \phi^{(2,1)}_{2,k}(x_1)}(\tilde  H^{(2,1)}_{k})^{-1} \overline{L_\Gamma^{(2)}}(x_2)\tilde  C^{(2,1)}_{1,k}(x_2)-	 \overline{L_\Gamma^{(2)}}(\bar x_1) K_{C_1}^{[l]}(\bar x_1,x_2)
	-\delta  \overline{L_\Gamma^{(2)}}(\bar x_1,x_2)
	\\=\Big[\overline{\tilde \phi^{(2,1)}_{2,l- 2N_{\mathfrak C}}(x_1)}(\tilde  H^{(2,1)}_{l- 2N_{\mathfrak C}})^{-1},\dots, \overline{\tilde \phi^{(2,1)}_{2,l+ 2 N_\Gamma-1}(x_1)}(\tilde  H^{(2,1)}_{l+ 2 N_\Gamma-1})^{-1}\Big]
\begin{bmatrix}
0_{2 N_{\mathfrak C}\times 2 N_\Gamma}&\mathfrak  C^{(2,1)}_{1,l}
\\
-\Gamma^{(2,1)}_{1,l}&0_{2 N_\Gamma\times 2 N_{\mathfrak C}}
\end{bmatrix}
	\begin{bmatrix}
	C_{1,l-2 N_\Gamma}(x_2)\\\vdots\\C_{1,l+2 N_{\mathfrak C}-1}(x_2)
	\end{bmatrix}.
	\end{multlined}
		\end{align*}
Now,  we  compute the  following spectral jets
\begin{align*}
&\begin{multlined}[t][0.9\textwidth]
\sum_{k=0}^{l-1}(\tilde  H^{(1,2)}_{k})^{-1}\tilde\phi^{(1,2)}_{1,k}(x_2)
\overline{\mathcal J^{ \overline{L_\Gamma^{(1)}}}_{ \overline{L_\Gamma^{(1)}}\tilde  C^{(1,2)}_{2,k}}}
-	 {L_\Gamma^{(1)}}(x_2)\mathcal J^{ \overline{L_\Gamma^{(1)}}}_{K_{C_2}^{[l]}}(x_2)-
\mathcal J^{ \overline{L_\Gamma^{(1)}}}_{\delta {L_\Gamma^{(1)}}}(x_2)
\\=
 {\Big[\tilde \phi^{(1,2)}_{1,l- 2N_{\mathfrak C}}(x_2)({\tilde H}_{l- 2N_{\mathfrak C}}^{(1,2)}]^{-1},\dots,
	\tilde \phi^{(1,2)}_{1,l+ 2 N_\Gamma-1}(x_2)({\tilde H}_{l+ 2 N_\Gamma-1}^{(1,2)})^{-1}\Big]}
\begin{bmatrix}
0_{2 N_{\mathfrak C}\times 2 N_\Gamma}&
\overline{\mathfrak  C^{(1,2)}_{1,l}}\\
-\overline{\Gamma^{(1,2)}_{2,l}}&0_{2 N_\Gamma\times 2 N_{\mathfrak C}}
\end{bmatrix}
\begin{bmatrix}
\overline{ \mathcal J^{ \overline{L_\Gamma^{(1)}}}_{C_{2,l-2 N_\Gamma}}}\\ \vdots\\  \overline{
	\mathcal J^{ \overline{L_\Gamma^{(1)}}}_{	C_{2,l+2 N_{\mathfrak C}-1}}}
\end{bmatrix},
\end{multlined}	
\end{align*}
	\begin{align*}
&\begin{multlined}[t][0.9\textwidth]
\sum_{k=0}^{l-1}\overline{\tilde \phi^{(2,1)}{_{2,k}}(x_1)}(\tilde  H^{(2,1)}_{k})^{-1}\mathcal  J^{ \overline{L_\Gamma^{(2)}}}_{ \overline{L^{(2)}_\Gamma}\tilde  C^{(2,1)}_{1,k}}-	\overline{L_\Gamma^{(2)}}(\bar x_1) \mathcal  J^{\overline{L_\Gamma^{(2)}}}_{K_{C_1}^{[l]}}(\bar x_1)
-\mathcal  J^{\overline{L_\Gamma^{(2)}}}_{\delta \overline{L_\Gamma^{(2)}}}(\bar x_1)
\\=\Big[\overline{\tilde \phi^{(2,1)}_{2,l- 2N_{\mathfrak C}}(x_1)}(\tilde  H^{(2,1)}_{l- 2N_{\mathfrak C}})^{-1},\dots, \overline{\tilde \phi^{(2,1)}_{2,l+ 2 N_\Gamma-1}(x_1)}(\tilde  H^{(2,1)}_{l+ 2 N_\Gamma-1})^{-1}\Big]
\begin{bmatrix}
0_{2 N_{\mathfrak C}\times 2 N_\Gamma}&\mathfrak  C^{(2,1)}_{1,l}
\\
-\Gamma^{(2,1)}_{1,l}&0_{2 N_\Gamma\times 2 N_{\mathfrak C}}
\end{bmatrix}
\begin{bmatrix}
\mathcal J^{ \overline{L_\Gamma^{(2)}}}_{ {L_\Gamma^{(2)}}	C_{1,l-2 N_\Gamma}}\\\vdots\\\mathcal J^{ \overline{L_\Gamma^{(2)}}}_{ {L_\Gamma^{(2)}}C_{1,l-1}(x_2)}
\end{bmatrix}.
\end{multlined}
\end{align*}
From \eqref{caleGer} and \eqref{chorro (1)} we deduce
\begin{align*}
&\begin{multlined}[t][0.9\textwidth]
\sum_{k=0}^{l-1}(\tilde  H^{(1,2)}_{k})^{-1}\tilde\phi^{(1,2)}_{1,k}(x_2)
	\overline{\prodint{L^{(1)}_{\mathfrak C}\xi^{(1)},	\tilde\phi^{(1,2)}_{2,k}}	\mathcal L^{(1)}}
-	 {L_\Gamma^{(1)}}(x_2)\mathcal J^{ \overline{L_\Gamma^{(1)}}}_{K_{C_2}^{[l]}}(x_2)-
\mathcal J^{ \overline{L_\Gamma^{(1)}}}_{\delta {L_\Gamma^{(1)}}}(x_2)
\\=
{\Big[\tilde \phi^{(1,2)}_{1,l- 2N_{\mathfrak C}}(x_2)({\tilde H}_{l- 2N_{\mathfrak C}}^{(1,2)})^{-1},\dots,
	\tilde \phi^{(1,2)}_{1,l+ 2 N_\Gamma-1}(x_2)({\tilde H}_{l+ 2 N_\Gamma-1}^{(1,2)})^{-1}\Big]}
\begin{bmatrix}
0_{2 N_{\mathfrak C}\times 2 N_\Gamma}&
\overline{\mathfrak  C^{(1,2)}_{1,l}}\\
-\overline{\Gamma^{(1,2)}_{2,l}}&0_{2 N_\Gamma\times 2 N_{\mathfrak C}}
\end{bmatrix}
\begin{bmatrix}
\overline{ \mathcal J^{ \overline{L_\Gamma^{(1)}}}_{C_{2,l-2 N_\Gamma}}}\\ \vdots\\  \overline{
	\mathcal J^{ \overline{L_\Gamma^{(1)}}}_{	C_{2,l+2 N_{\mathfrak C}-1}}}
\end{bmatrix},
\end{multlined}	
\end{align*}
\begin{align*}
&\begin{multlined}[t][0.9\textwidth]
\sum_{k=0}^{l-1}\overline{\tilde \phi^{(2,1)}{_{2,k}}(x_1)}(\tilde  H^{(2,1)}_{k})^{-1}	\prodint{L^{(2)}_{\mathfrak C}\xi^{(2)},\tilde\phi^{(2,1)}_{1,k}}\mathcal L^{(2)}
-	\overline{L_\Gamma^{(2)}}(\bar x_1) \mathcal  J^{\overline{L_\Gamma^{(2)}}}_{K_{C_1}^{[l]}}(\bar x_1)
-\mathcal  J^{\overline{L_\Gamma^{(2)}}}_{\delta \overline{L_\Gamma^{(2)}}}(\bar x_1)
\\=\Big[\overline{\tilde \phi^{(2,1)}_{2,l- 2N_{\mathfrak C}}(x_1)}(\tilde  H^{(2,1)}_{l- 2N_{\mathfrak C}})^{-1},\dots, \overline{\tilde \phi^{(2,1)}_{2,l+ 2 N_\Gamma-1}(x_1)}(\tilde  H^{(2,1)}_{l+ 2 N_\Gamma-1})^{-1}\Big]
\begin{bmatrix}
0_{2 N_{\mathfrak C}\times 2 N_\Gamma}&\mathfrak  C^{(2,1)}_{1,l}
\\
-\Gamma^{(2,1)}_{1,l}&0_{2 N_\Gamma\times 2 N_{\mathfrak C}}
\end{bmatrix}
\begin{bmatrix}
\mathcal J^{ \overline{L_\Gamma^{(2)}}}_{ {L_\Gamma^{(2)}}	C_{1,l-2 N_\Gamma}}\\\vdots\\\mathcal J^{ \overline{L_\Gamma^{(2)}}}_{ {L_\Gamma^{(2)}}C_{1,l-1}(x_2)}
\end{bmatrix}.
\end{multlined}
\end{align*}
Recalling \eqref{kernelChristoffeldefinicion} we get
	\begin{align*}
	&\begin{multlined}[t][0.9\textwidth]
	\prodint{\overline{L^{(2)}_{\mathfrak C}(z)(\xi^{(1)})_z},	\tilde K^{(1,2),[l]}(\bar z,x_2)}	\overline{		\mathcal L^{(1)}}
	=	 {L_\Gamma^{(1)}}(x_2)\mathcal J^{ \overline{L_\Gamma^{(1)}}}_{K_{C_2}^{[l]}}(x_2)+
	\mathcal J^{ \overline{L_\Gamma^{(1)}}}_{\delta {L_\Gamma^{(1)}}}(x_2)
	\\+
	{\Big[\tilde \phi^{(1,2)}_{1,l- 2N_{\mathfrak C}}(x_2)({\tilde H}_{l- 2N_{\mathfrak C}}^{(1,2)})^{-1},\dots,
		\tilde \phi^{(1,2)}_{1,l+ 2 N_\Gamma-1}(x_2)({\tilde H}_{l+ 2 N_\Gamma-1}^{(1,2)})^{-1}\Big]}
\begin{bmatrix}
0_{2 N_{\mathfrak C}\times 2 N_\Gamma}&
\overline{\mathfrak  C^{(1,2)}_{1,l}}\\
-\overline{\Gamma^{(1,2)}_{2,l}}&0_{2 N_\Gamma\times 2 N_{\mathfrak C}}
\end{bmatrix}
	\begin{bmatrix}
	\overline{ \mathcal J^{ \overline{L_\Gamma^{(1)}}}_{C_{2,l-2 N_\Gamma}}}\\ \vdots\\  \overline{
		\mathcal J^{ \overline{L_\Gamma^{(1)}}}_{	C_{2,l+2 N_{\mathfrak C}-1}}}
	\end{bmatrix},
	\end{multlined}	
	\end{align*}
	\begin{align*}
	&\begin{multlined}[t][0.9\textwidth]
	\prodint{L^{(1)}_{\mathfrak C}(z)(\xi^{(2)})_z,\tilde K^{(2,1),[l]}(\bar x_1,z)}\mathcal L^{(2)}
	=	\overline{L_\Gamma^{(2)}}(\bar x_1) \mathcal  J^{\overline{L_\Gamma^{(2)}}}_{K_{C_1}^{[l]}}(\bar x_1)
	+\mathcal  J^{\overline{L_\Gamma^{(2)}}}_{\delta \overline{L_\Gamma^{(2)}}}(\bar x_1)
	\\+\Big[\overline{\tilde \phi^{(2,1)}_{2,l- 2N_{\mathfrak C}}(x_1)}(\tilde  H^{(2,1)}_{l- 2N_{\mathfrak C}})^{-1},\dots, \overline{\tilde \phi^{(2,1)}_{2,l+ 2 N_\Gamma-1}(x_1)}(\tilde  H^{(2,1)}_{l+ 2 N_\Gamma-1})^{-1}\Big]
\begin{bmatrix}
0_{2 N_{\mathfrak C}\times 2 N_\Gamma}&\mathfrak  C^{(2,1)}_{1,l}
\\
-\Gamma^{(2,1)}_{1,l}&0_{2 N_\Gamma\times 2 N_{\mathfrak C}}
\end{bmatrix}
	\begin{bmatrix}
	\mathcal J^{ \overline{L_\Gamma^{(2)}}}_{ {L_\Gamma^{(2)}}	C_{1,l-2 N_\Gamma}}\\\vdots\\\mathcal J^{ \overline{L_\Gamma^{(2)}}}_{ {L_\Gamma^{(2)}}C_{1,l-1}(x_2)}
	\end{bmatrix}.
	\end{multlined}
	\end{align*}
	Now, from 	 \eqref{GerKerNor1} and \eqref{GerKerNor2}, we deduce	
	\begin{multline*}
\prodint{\overline{L^{(2)}_{\mathfrak C}(z)(\xi^{(1)})_z},{\tilde K^{(1,2),[l]}(\bar z,x_2)}}\overline{\mathcal L^{(1)}}={L_\Gamma^{(1)}(x_2)} \prodint{\overline{(\xi^{(1)})_z},{K^{[l]}(\bar z,x_2)}}
\overline{\mathcal L^{(1)}}\\+{\Big[\tilde \phi^{(1,2)}_{1,l- 2N_{\mathfrak C}}(z_2)\big({\tilde H}_{l- 2N_{\mathfrak C}}^{(1,2)}\big)^{-1},\dots,
		\tilde \phi^{(1,2)}_{1,l+2 N_\Gamma-1}(z_2)\big({\tilde H}_{l+ 2 N_\Gamma-1}^{(1,2)}\big)^{-1}\Big]}
\begin{bmatrix}
0_{2 N_{\mathfrak C}\times 2 N_\Gamma}&
\overline{\mathfrak  C^{(1,2)}_{1,l}}\\
-\overline{\Gamma^{(1,2)}_{2,l}}&0_{2 N_\Gamma\times 2 N_{\mathfrak C}}
\end{bmatrix}
	\begin{bmatrix}
	\overline{ \prodint{\xi^{(1)},\phi_{2,l-2 N_\Gamma}}\mathcal L^{(1)}}\\ \vdots\\ 	\overline{ \prodint{\xi^{(1)},\phi_{2,l+2 N_{\mathfrak C}-1}}\mathcal L^{(1)}}\end{bmatrix},
	\end{multline*}
	\begin{multline*}
\prodint{L^{(1)}_{\mathfrak C}(z)(\xi^{(2)})_z,\tilde K^{(2),[l]}(\bar x_1,z)}\mathcal L^{(2)}= \overline{ L_\Gamma^{(2)}( x_1)} \prodint{{(\xi^{(2)})_z},K^{[l]}(\bar x_1,z)}\mathcal L^{(2)}\\+{\Big[\overline{\tilde \phi^{(2,1)}_{2,l- 2N_{\mathfrak C}}(x_1)}({\tilde H}_{l- 2N_{\mathfrak C}}^{(2,1)})^{-1},\dots,
		\overline{\tilde \phi^{(2,1)}_{2,l+ 2 N_\Gamma-1}(x_1)}({\tilde H}_{l+ 2 N_\Gamma-1}^{(2,1)})^{-1}\Big]}	
\begin{bmatrix}
0_{2 N_{\mathfrak C}\times 2 N_\Gamma}&\mathfrak  C^{(2,1)}_{1,l}
\\
-\Gamma^{(2,1)}_{1,l}&0_{2 N_\Gamma\times 2 N_{\mathfrak C}}
\end{bmatrix}
	\begin{bmatrix}
\prodint{\xi^{(2)},\phi_{1,l-2 N_\Gamma}}\mathcal L^{(2)}\\ \vdots\\
	\prodint{\xi^{(2)},\phi_{1,l+2 N_{\mathfrak C}-1}}\mathcal L^{(2)}\end{bmatrix}.
	\end{multline*}	
	Therefore, we conclude
		\begin{align*}
	&\begin{multlined}[t][0.9\textwidth]
 {L_\Gamma^{(1)}}(x_2)\mathcal J^{ \overline{L_\Gamma^{(1)}}}_{K_{C_2}^{[l]}}(x_2)+
	\mathcal J^{ \overline{L_\Gamma^{(1)}}}_{\delta {L_\Gamma^{(1)}}}(x_2)-{L_\Gamma^{(1)}(x_2)} \prodint{\overline{(\xi^{(1)})_z},{K^{[l]}(\bar z,x_2)}}
	\overline{\mathcal L^{(1)}}
	\\=-
	{\Big[\tilde \phi^{(1,2)}_{1,l- 2N_{\mathfrak C}}(x_2)({\tilde H}_{l- 2N_{\mathfrak C}}^{(1,2)})^{-1},\dots,
		\tilde \phi^{(1,2)}_{1,l+ 2 N_\Gamma-1}(x_2)({\tilde H}_{l+ 2 N_\Gamma-1}^{(1,2)})^{-1}\Big]}
\\\times	
\begin{bmatrix}
0_{2 N_{\mathfrak C}\times 2 N_\Gamma}&
\overline{\mathfrak  C^{(1,2)}_{1,l}}\\
-\overline{\Gamma^{(1,2)}_{2,l}}&0_{2 N_\Gamma\times 2 N_{\mathfrak C}}
\end{bmatrix}
	\begin{bmatrix}
	\overline{ \mathcal J^{ \overline{L_\Gamma^{(1)}}}_{C_{2,l-2 N_\Gamma}}-\prodint{\xi^{(1)},\phi_{2,l-2 N_\Gamma}}\mathcal L^{(1)}}\\ \vdots\\  \overline{
		\mathcal J^{ \overline{L_\Gamma^{(1)}}}_{	C_{2,l+2 N_{\mathfrak C}-1}}-\prodint{\xi^{(1)},\phi_{2,l+2 N_{\mathfrak C}-1}}\mathcal L^{(1)}}
	\end{bmatrix},
	\end{multlined}	
	\end{align*}
	\begin{align*}
	&\begin{multlined}[t][0.9\textwidth]
	\overline{L_\Gamma^{(2)}( x_1)} \mathcal  J^{\overline{L_\Gamma^{(2)}}}_{K_{C_1}^{[l]}}(\bar x_1)
	+\mathcal  J^{\overline{L_\Gamma^{(2)}}}_{\delta \overline{L_\Gamma^{(2)}}}(\bar x_1)-\overline{ L_\Gamma^{(2)}( x_1)} \prodint{(\xi^{(2)})_z,K^{[l]}(\bar x_1,z)}\mathcal L^{(2)}
	\\=-\Big[\overline{\tilde \phi^{(2,1)}_{2,l- 2N_{\mathfrak C}}(x_1)}(\tilde  H^{(2,1)}_{l- 2N_{\mathfrak C}})^{-1},\dots, \overline{\tilde \phi^{(2,1)}_{2,l+ 2 N_\Gamma-1}(x_1)}(\tilde  H^{(2,1)}_{l+ 2 N_\Gamma-1})^{-1}\Big]\\\times
\begin{bmatrix}
0_{2 N_{\mathfrak C}\times 2 N_\Gamma}&\mathfrak  C^{(2,1)}_{1,l}\\
-\Gamma^{(2,1)}_{1,l}&0_{2 N_\Gamma\times 2 N_{\mathfrak C}}
\end{bmatrix}
\begin{bmatrix}
	\mathcal J^{ \overline{L_\Gamma^{(2)}}}_{ 	C_{1,l-2 N_\Gamma}}-\prodint{\xi^{(2)},\phi_{1,l-2 N_\Gamma}}\mathcal L^{(2)}\\\vdots\\\mathcal J^{ \overline{L_\Gamma^{(2)}}}_{ C_{1,l+2 N_{\mathfrak C}-1}(x_2)}-\prodint{\xi^{(2)},\phi_{1,l+2 N_{\mathfrak C}-1}}\mathcal L^{(2)}
	\end{bmatrix}.
	\end{multlined}
	\end{align*}
We return to  \eqref{GerKerNor1} and \eqref{GerKerNor2}, and deduce, taking spectral jets, that
\begin{multline}
{L_\Gamma^{(1)}(x_2)} \mathcal J^{\overline{L_{\mathfrak C}^{(2)}}}_{K^{[l]}}(x_2)\\=-{\Big[\tilde \phi^{(1,2)}_{1,l- 2N_{\mathfrak C}}(x_2)\big({\tilde H}_{l- 2N_{\mathfrak C}}^{(1,2)}\big)^{-1},\dots,
	\tilde \phi^{(1,2)}_{1,l+2 N_\Gamma-1}(x_2)\big({\tilde H}_{l+ 2 N_\Gamma-1}^{(1,2)}\big)^{-1}\Big]}
\begin{bmatrix}
0_{2 N_{\mathfrak C}\times 2 N_\Gamma}&
\overline{\mathfrak  C^{(1,2)}_{1,l}}\\
-\overline{\Gamma^{(1,2)}_{2,l}}&0_{2 N_\Gamma\times 2 N_{\mathfrak C}}
\end{bmatrix}
\begin{bmatrix}
\overline{ \mathcal J^{{L_{\mathfrak C}^{(2)}}}_{\phi_{2,l-2 N_\Gamma}}}\\ \vdots\\  \overline{
	\mathcal J^{{L_{\mathfrak C}^{(2)}}}_{\phi_{2,l+2 N_{\mathfrak C}-1}}}\end{bmatrix},
\end{multline}
\begin{multline}
\overline{ L_\Gamma^{(2)}( x_1)} \mathcal J^{L_{\mathfrak C}^{(1)}}_{K^{[l]}}(\bar x_1)\\=-{\Big[\overline{\tilde \phi^{(2,1)}_{2,l- 2N_{\mathfrak C}}(x_1)}({\tilde H}_{l- 2N_{\mathfrak C}}^{(2,1)})^{-1},\dots,
	\overline{\tilde \phi^{(2,1)}_{2,l+ 2 N_\Gamma-1}(x_1)}({\tilde H}_{l+ 2 N_\Gamma-1}^{(2,1)})^{-1}\Big]}	
\begin{bmatrix}
0_{2 N_{\mathfrak C}\times 2 N_\Gamma}&\mathfrak  C^{(2,1)}_{1,l}\\
-\Gamma^{(2,1)}_{1,l}&0_{2 N_\Gamma\times 2 N_{\mathfrak C}}
\end{bmatrix}
\begin{bmatrix}
\mathcal J^{L_{\mathfrak C}^{(1)}}_{\phi_{1,l-2 N_\Gamma}}\\ \vdots\\
\mathcal J^{L_{\mathfrak C}^{(1)}}_{\phi_{1,l+2 N_{\mathfrak C}-1}}\end{bmatrix}.
\end{multline}
Consequently,
		\begin{align*}
&\begin{multlined}[t][0.9\textwidth]
\bigg[{L_\Gamma^{(1)}}(x_2)\mathcal J^{ \overline{L_\Gamma^{(1)}}}_{K_{C_2}^{[l]}}(x_2)+
\mathcal J^{ \overline{L_\Gamma^{(1)}}}_{\delta {L_\Gamma^{(1)}}}(x_2)-{L_\Gamma^{(1)}(x_2)} \prodint{\overline{(\xi^{(1)})_z},{K^{[l]}(\bar z,x_2)}}
\overline{\mathcal L^{(1)}},{L_\Gamma^{(1)}(x_2)} \mathcal J^{\overline{L_{\mathfrak C}^{(2)}}}_{K^{[l]}}(x_2)\bigg]
\\=-
{\Big[\tilde \phi^{(1,2)}_{1,l- 2N_{\mathfrak C}}(x_2)({\tilde H}_{l- 2N_{\mathfrak C}}^{(1,2)})^{-1},\dots,
	\tilde \phi^{(1,2)}_{1,l+ 2 N_\Gamma-1}(x_2)({\tilde H}_{l+ 2 N_\Gamma-1}^{(1,2)})^{-1}\Big]}
\\\times	
\begin{bmatrix}
0_{2 N_{\mathfrak C}\times 2 N_\Gamma}&
\overline{\mathfrak  C^{(1,2)}_{1,l}}\\
-\overline{\Gamma^{(1,2)}_{2,l}}&0_{2 N_\Gamma\times 2 N_{\mathfrak C}}
\end{bmatrix}
\begin{bmatrix}
\overline{ \mathcal J^{ \overline{L_\Gamma^{(1)}}}_{C_{2,l-2 N_\Gamma}}-\prodint{\xi^{(1)},\phi_{2,l-2 N_\Gamma}}\mathcal L^{(1)}}&\overline{ \mathcal J^{{L_{\mathfrak C}^{(2)}}}_{\phi_{2,l-2 N_\Gamma}}}\\ \vdots &\vdots\\  \overline{
	\mathcal J^{ \overline{L_\Gamma^{(1)}}}_{	C_{2,l+2 N_{\mathfrak C}-1}}-\prodint{\xi^{(1)},\phi_{2,l+2 N_{\mathfrak C}-1}}\mathcal L^{(1)}}&\overline{ \mathcal J^{{L_{\mathfrak C}^{(2)}}}_{\phi_{2,l+2 N_{\mathfrak C}-1}}}
\end{bmatrix},
\end{multlined}	
\end{align*}
\begin{align*}
&\begin{multlined}[t][0.9\textwidth]
\bigg[\overline{L_\Gamma^{(2)}( x_1)} \mathcal  J^{\overline{L_\Gamma^{(2)}}}_{K_{C_1}^{[l]}}(\bar x_1)
+\mathcal  J^{\overline{L_\Gamma^{(2)}}}_{\delta \overline{L_\Gamma^{(2)}}}(\bar x_1)-\overline{ L_\Gamma^{(2)}( x_1)} \prodint{(\xi^{(2)})_z,K^{[l]}(\bar x_1,z)}\mathcal L^{(2)},\overline{ L_\Gamma^{(2)}( x_1)} \mathcal J^{L_{\mathfrak C}^{(1)}}_{K^{[l]}}(\bar x_1)\bigg]
\\=-\Big[\overline{\tilde \phi^{(2,1)}_{2,l- 2N_{\mathfrak C}}(x_1)}(\tilde  H^{(2,1)}_{l- 2N_{\mathfrak C}})^{-1},\dots, \overline{\tilde \phi^{(2,1)}_{2,l+ 2 N_\Gamma-1}(x_1)}(\tilde  H^{(2,1)}_{l+ 2 N_\Gamma-1})^{-1}\Big]
\\
\times\begin{bmatrix}
0_{2 N_{\mathfrak C}\times 2 N_\Gamma}&\mathfrak  C^{(2,1)}_{1,l}\\
-\Gamma^{(2,1)}_{1,l}&0_{2 N_\Gamma\times 2 N_{\mathfrak C}}
\end{bmatrix}\begin{bmatrix}
\mathcal J^{ \overline{L_\Gamma^{(2)}}}_{ 	C_{1,l-2 N_\Gamma}}-\prodint{\xi^{(2)},\phi_{1,l-2 N_\Gamma}}\mathcal L^{(2)}&\mathcal J^{L_{\mathfrak C}^{(1)}}_{\phi_{1,l-2 N_\Gamma}}\\\vdots&\vdots\\\mathcal J^{ \overline{L_\Gamma^{(2)}}}_{ C_{1,l+2 N_{\mathfrak C}-1}(x_2)}-\prodint{\xi^{(2)},\phi_{1,l+2 N_{\mathfrak C}-1}}\mathcal L^{(2)}&\mathcal J^{L_{\mathfrak C}^{(1)}}_{\phi_{1,l+2 N_{\mathfrak C}-1}}
\end{bmatrix}.
\end{multlined}
\end{align*}
Hence, we conclude
	\begin{align*}
&\begin{multlined}[t][0.9\textwidth]
{\Big[\tilde \phi^{(1,2)}_{1,l- 2N_{\mathfrak C}}(x_2)({\tilde H}_{l- 2N_{\mathfrak C}}^{(1,2)})^{-1},\dots,
	\tilde \phi^{(1,2)}_{1,l+ 2 N_\Gamma-1}(x_2)({\tilde H}_{l+ 2 N_\Gamma-1}^{(1,2)})^{-1}\Big]}
\begin{bmatrix}
0_{2 N_{\mathfrak C}\times 2 N_\Gamma}&
\overline{\mathfrak  C^{(1,2)}_{1,l}}\\
-\overline{\Gamma^{(1,2)}_{2,l}}&0_{2 N_\Gamma\times 2 N_{\mathfrak C}}
\end{bmatrix}\\=-\bigg[{L_\Gamma^{(1)}}(x_2)\mathcal J^{ \overline{L_\Gamma^{(1)}}}_{K_{C_2}^{[l]}}(x_2)+
\mathcal J^{ \overline{L_\Gamma^{(1)}}}_{\delta {L_\Gamma^{(1)}}}(x_2)-{L_\Gamma^{(1)}(x_2)} \prodint{\overline{(\xi^{(1)})_z},{K^{[l]}(\bar z,x_2)}}
\overline{\mathcal L^{(1)}},{L_\Gamma^{(1)}(x_2)} \mathcal J^{\overline{L_{\mathfrak C}^{(2)}}}_{K^{[l]}}(x_2)\bigg]
\\\times
\begin{bmatrix}
\overline{ \mathcal J^{ \overline{L_\Gamma^{(1)}}}_{C_{2,l-2 N_\Gamma}}-\prodint{\xi^{(1)},\phi_{2,l-2 N_\Gamma}}\mathcal L^{(1)}}&\overline{ \mathcal J^{{L_{\mathfrak C}^{(2)}}}_{\phi_{2,l-2 N_\Gamma}}}\\ \vdots &\vdots\\  \overline{
	\mathcal J^{ \overline{L_\Gamma^{(1)}}}_{	C_{2,l+2 N_{\mathfrak C}-1}}-\prodint{\xi^{(1)},\phi_{2,l+2 N_{\mathfrak C}-1}}\mathcal L^{(1)}}&\overline{ \mathcal J^{{L_{\mathfrak C}^{(2)}}}_{\phi_{2,l+2 N_{\mathfrak C}-1}}}
\end{bmatrix}^{-1},
\end{multlined}	
\end{align*}
\begin{align*}
&\begin{multlined}[t][0.9\textwidth]
\Big[\overline{\tilde \phi^{(2,1)}_{2,l- 2N_{\mathfrak C}}(x_1)}(\tilde  H^{(2,1)}_{l- 2N_{\mathfrak C}})^{-1},\dots, \overline{\tilde \phi^{(2,1)}_{2,l+ 2 N_\Gamma-1}(x_1)}(\tilde  H^{(2,1)}_{l+ 2 N_\Gamma-1})^{-1}\Big]
\begin{bmatrix}
0_{2 N_{\mathfrak C}\times 2 N_\Gamma}&\mathfrak  C^{(2,1)}_{1,l}\\
-\Gamma^{(2,1)}_{1,l}&0_{2 N_\Gamma\times 2 N_{\mathfrak C}}
\end{bmatrix}\\=-
\bigg[\overline{L_\Gamma^{(2)}( x_1)} \mathcal  J^{\overline{L_\Gamma^{(2)}}}_{K_{C_1}^{[l]}}(\bar x_1)
+\mathcal  J^{\overline{L_\Gamma^{(2)}}}_{\delta \overline{L_\Gamma^{(2)}}}(\bar x_1)-\overline{ L_\Gamma^{(2)}( x_1)} \prodint{(\xi^{(2)})_z,K^{[l]}(\bar x_1,z)}\mathcal L^{(2)},\overline{ L_\Gamma^{(2)}( x_1)} \mathcal J^{L_{\mathfrak C}^{(1)}}_{K^{[l]}}(\bar x_1)\bigg]
\\
\times\begin{bmatrix}
\mathcal J^{ \overline{L_\Gamma^{(2)}}}_{ 	C_{1,l-2 N_\Gamma}}-\prodint{\xi^{(2)},\phi_{1,l-2 N_\Gamma}}\mathcal L^{(2)}&\mathcal J^{L_{\mathfrak C}^{(1)}}_{\phi_{1,l-2 N_\Gamma}}\\\vdots&\vdots\\\mathcal J^{ \overline{L_\Gamma^{(2)}}}_{ C_{1,l+2 N_{\mathfrak C}-1}(x_2)}-\prodint{\xi^{(2)},\phi_{1,l+2 N_{\mathfrak C}-1}}\mathcal L^{(2)}&\mathcal J^{L_{\mathfrak C}^{(1)}}_{\phi_{1,l+2 N_{\mathfrak C}-1}}
\end{bmatrix}^{-1}.
\end{multlined}
\end{align*}
Recalling \eqref{Omegalambda} we get
	\begin{align*}
&\begin{multlined}[t][0.9\textwidth]
\tilde \phi^{(1,2)}_{1,l}(x_2)
=\bigg[{L_\Gamma^{(1)}}(x_2)\mathcal J^{ \overline{L_\Gamma^{(1)}}}_{K_{C_2}^{[l]}}(x_2)+
\mathcal J^{ \overline{L_\Gamma^{(1)}}}_{\delta {L_\Gamma^{(1)}}}(x_2)-{L_\Gamma^{(1)}(x_2)} \prodint{\overline{(\xi^{(1)})_z},{K^{[l]}(\bar z,x_2)}}
\overline{\mathcal L^{(1)}},{L_\Gamma^{(1)}(x_2)} \mathcal J^{\overline{L_{\mathfrak C}^{(2)}}}_{K^{[l]}}(x_2)\bigg]
\\\times
\begin{bmatrix}
\overline{ \mathcal J^{ \overline{L_\Gamma^{(1)}}}_{C_{2,l-2 N_\Gamma}}-\prodint{\xi^{(1)},\phi_{2,l-2 N_\Gamma}}\mathcal L^{(1)}}&\overline{ \mathcal J^{{L_{\mathfrak C}^{(2)}}}_{\phi_{2,l-2 N_\Gamma}}}\\ \vdots &\vdots\\  \overline{
	\mathcal J^{ \overline{L_\Gamma^{(1)}}}_{	C_{2,l+2 N_{\mathfrak C}-1}}-\prodint{\xi^{(1)},\phi_{2,l+2 N_{\mathfrak C}-1}}\mathcal L^{(1)}}&\overline{ \mathcal J^{{L_{\mathfrak C}^{(2)}}}_{\phi_{2,l+2 N_{\mathfrak C}-1}}}
\end{bmatrix}^{-1}\begin{bmatrix}
	\frac{H_{l-2 N_\Gamma}}{L^{(1)}_{\Gamma,(-1)^ln}}\\0\\\vdots\\0
	\end{bmatrix},
\end{multlined}	
\end{align*}
\begin{align*}
\begin{multlined}[t][0.9\textwidth]
\overline{\tilde \phi^{(2,1)}_{1,l}(x_1)}=
\bigg[\overline{L_\Gamma^{(2)}( x_1)} \mathcal  J^{\overline{L_\Gamma^{(2)}}}_{K_{C_1}^{[l]}}(\bar x_1)
+\mathcal  J^{\overline{L_\Gamma^{(2)}}}_{\delta \overline{L_\Gamma^{(2)}}}(\bar x_1)-\overline{ L_\Gamma^{(2)}( x_1)} \prodint{(\xi^{(2)})_z,K^{[l]}(\bar x_1,z)}\mathcal L^{(2)},\overline{ L_\Gamma^{(2)}( x_1)} \mathcal J^{L_{\mathfrak C}^{(1)}}_{K^{[l]}}(\bar x_1)\bigg]
\\
\times\begin{bmatrix}
\mathcal J^{ \overline{L_\Gamma^{(2)}}}_{ 	C_{1,l-2 N_\Gamma}}-\prodint{\xi^{(2)},\phi_{1,l-2 N_\Gamma}}\mathcal L^{(2)}&\mathcal J^{L_{\mathfrak C}^{(1)}}_{\phi_{1,l-2 N_\Gamma}}\\\vdots&\vdots\\\mathcal J^{ \overline{L_\Gamma^{(2)}}}_{ C_{1,l+2 N_{\mathfrak C}-1}(x_2)}-\prodint{\xi^{(2)},\phi_{1,l+2 N_{\mathfrak C}-1}}\mathcal L^{(2)}&\mathcal J^{L_{\mathfrak C}^{(1)}}_{\phi_{1,l+2 N_{\mathfrak C}-1}}
\end{bmatrix}^{-1}\begin{bmatrix}
\frac{H_{l-2 N_\Gamma}}{\overline{L^{(2)}_{\Gamma,(-1)^ln}}}\\0\\\vdots\\0
\end{bmatrix}.
\end{multlined}
\end{align*}
Thus, have proven the following last quasideterminantal expressions
\begin{align*}
	\overline{\tilde \phi^{(2,1)}_{2,l}(z)}&=-\Theta_*\begin{bmatrix}
\mathcal J^{ \overline{L_\Gamma^{(2)}}}_{ 	C_{1,l-2 N_\Gamma}}-\prodint{\xi^{(2)},\phi_{1,l-2 N_\Gamma}}\mathcal L^{(2)}&\mathcal J^{L_{\mathfrak C}^{(1)}}_{\phi_{1,l-2 N_\Gamma}}& \frac{	H_{l-2 N_\Gamma}}{\overline{L^{(2)}_{\Gamma,(-1)^lN_\Gamma}}}\\
\mathcal J^{ \overline{L_\Gamma^{(2)}}}_{ 	C_{1,l-2 N_\Gamma+1}}-\prodint{\xi^{(2)},\phi_{1,l-2 N_{\Gamma}+1}}\mathcal L^{(2)}&\mathcal J^{L_{\mathfrak C}^{(1)}}_{\phi_{1,l-2 N_\Gamma+1}}&0\\\vdots&\vdots&\vdots\\\mathcal J^{ \overline{L_\Gamma^{(2)}}}_{ C_{1,l+2 N_{\mathfrak C}-1}}-\prodint{\xi^{(2)},\phi_{1,l+2 N_{\mathfrak C}-1}}\mathcal L^{(2)}&\mathcal J^{L_{\mathfrak C}^{(1)}}_{\phi_{1,l+2 N_{\mathfrak C}-1}}&0\\
\overline{L_\Gamma^{(2)}( z)} \mathcal  J^{\overline{L_\Gamma^{(2)}}}_{K_{C_1}^{[l]}}(\bar z)
+\mathcal  J^{\overline{L_\Gamma^{(2)}}}_{\delta \overline{L_\Gamma^{(2)}}}(\bar z)-\overline{ L_\Gamma^{(2)}( z)} \prodint{(\xi^{(2)})_w,K^{[l]}(\bar z,w)}\mathcal L^{(2)}&\overline{ L_\Gamma^{(2)}( z)} \mathcal J^{L_{\mathfrak C}^{(1)}}_{K^{[l]}}(\bar z)&0
\end{bmatrix},
\end{align*}
\begin{align*}
\tilde \phi^{(1,2)}_{1,l}(z)
	&=-\Theta_*
\begin{bmatrix}
\overline{ \mathcal J^{ \overline{L_\Gamma^{(1)}}}_{C_{2,l-2 N_\Gamma}}-\prodint{\xi^{(1)},\phi_{2,l-2 N_\Gamma}}\mathcal L^{(1)}}&\overline{ \mathcal J^{{L_{\mathfrak C}^{(2)}}}_{\phi_{2,l-2 N_\Gamma}}}&\frac{	H_{l-2 N_\Gamma}}{L^{(1)}_{\Gamma,(-1)^lN_\Gamma}}\\
\overline{ \mathcal J^{ \overline{L_\Gamma^{(1)}}}_{C_{2,l-2 N_\Gamma+1}}-\prodint{\xi^{(1)},\phi_{2,l-2 N_\Gamma+1}}\mathcal L^{(1)}}&\overline{ \mathcal J^{{L_{\mathfrak C}^{(2)}}}_{\phi_{2,l-2 N_\Gamma+1}}}&0\\
\vdots &\vdots&\vdots\\  \overline{
	\mathcal J^{ \overline{L_\Gamma^{(1)}}}_{	C_{2,l+2 N_{\mathfrak C}-1}}-\prodint{\xi^{(1)},\phi_{2,l+2 N_{\mathfrak C}-1}}\mathcal L^{(1)}}&\overline{ \mathcal J^{{L_{\mathfrak C}^{(2)}}}_{\phi_{2,l+2 N_{\mathfrak C}-1}}}&0\\
{L_\Gamma^{(1)}}(z)\mathcal J^{ \overline{L_\Gamma^{(1)}}}_{K_{C_2}^{[l]}}(z)+
\mathcal J^{ \overline{L_\Gamma^{(1)}}}_{\delta {L_\Gamma^{(1)}}}(z)-{L_\Gamma^{(1)}(z)} \prodint{\overline{(\xi^{(1)})_w},{K^{[l]}(\bar w,z)}}
\overline{\mathcal L^{(1)}} &{L_\Gamma^{(1)}(z)} \mathcal J^{\overline{L_{\mathfrak C}^{(2)}}}_{K^{[l]}}(z)&0
\end{bmatrix}.
\end{align*}

These formulas  can be expressed in terms of determinants
\begin{align*}	
	\overline{\tilde \phi^{(2,1)}_{2,l}(z)}&=-\frac{	H_{l-2 N_\Gamma}}{\overline{L^{(2)}_{\Gamma,(-1)^lN_\Gamma}}}\frac{1}{\tilde\tau^{(2,1)}_l}\begin{vmatrix}
\mathcal J^{ \overline{L_\Gamma^{(2)}}}_{ 	C_{1,l-2 N_\Gamma+1}}-\prodint{\xi^{(2)},\phi_{1,l-2 N_{\Gamma}+1}}\mathcal L^{(2)}&\mathcal J^{L_{\mathfrak C}^{(1)}}_{\phi_{1,l-2 N_\Gamma+1}}\\\vdots&\vdots\\\mathcal J^{ \overline{L_\Gamma^{(2)}}}_{ C_{1,l+2 N_{\mathfrak C}-1}}-\prodint{\xi^{(2)},\phi_{1,l+2 N_{\mathfrak C}-1}}\mathcal L^{(2)}&\mathcal J^{L_{\mathfrak C}^{(1)}}_{\phi_{1,l+2 N_{\mathfrak C}-1}}\\
\overline{L_\Gamma^{(2)}( z)} \mathcal  J^{\overline{L_\Gamma^{(2)}}}_{K_{C_1}^{[l]}}(\bar z)
+\mathcal  J^{\overline{L_\Gamma^{(2)}}}_{\delta \overline{L_\Gamma^{(2)}}}(\bar z)-\overline{ L_\Gamma^{(2)}( z)} \prodint{(\xi^{(2)})_w,K^{[l]}(\bar z,w)}\mathcal L^{(2)}&\overline{ L_\Gamma^{(2)}( z)} \mathcal J^{L_{\mathfrak C}^{(1)}}_{K^{[l]}}(\bar z)
\end{vmatrix},
\end{align*}
\begin{align*}	
\tilde \phi^{(1,2)}_{1,l}(z)
	&=-\frac{	H_{l-2 N_\Gamma}}{L^{(1)}_{\Gamma,(-1)^lN_\Gamma}}\frac{1}{\tilde\tau^{(1,2)}_l}
\begin{vmatrix}
\overline{ \mathcal J^{ \overline{L_\Gamma^{(1)}}}_{C_{2,l-2 N_\Gamma+1}}-\prodint{\xi^{(1)},\phi_{2,l-2 N_\Gamma+1}}\mathcal L^{(1)}}&\overline{ \mathcal J^{{L_{\mathfrak C}^{(2)}}}_{\phi_{2,l-2 N_\Gamma+1}}}\\
\vdots &\vdots\\  \overline{
	\mathcal J^{ \overline{L_\Gamma^{(1)}}}_{	C_{2,l+2 N_{\mathfrak C}-1}}-\prodint{\xi^{(1)},\phi_{2,l+2 N_{\mathfrak C}-1}}\mathcal L^{(1)}}&\overline{ \mathcal J^{{L_{\mathfrak C}^{(2)}}}_{\phi_{2,l+2 N_{\mathfrak C}-1}}}\\
{L_\Gamma^{(1)}}(z)\mathcal J^{ \overline{L_\Gamma^{(1)}}}_{K_{C_2}^{[l]}}(z)+
\mathcal J^{ \overline{L_\Gamma^{(1)}}}_{\delta {L_\Gamma^{(1)}}}(z)-{L_\Gamma^{(1)}(z)} \prodint{\overline{(\xi^{(1)})_w},{K^{[l]}(\bar w,z)}}
\overline{\mathcal L^{(1)}} &{L_\Gamma^{(1)}(z)} \mathcal J^{\overline{L_{\mathfrak C}^{(2)}}}_{K^{[l]}}(z)
\end{vmatrix}.
\end{align*}

But, we have the following linear expressions for the last rows of these matrices
\begin{multline*}
\bigg[\overline{L_\Gamma^{(2)}( z)} \mathcal  J^{\overline{L_\Gamma^{(2)}}}_{K_{C_1}^{[l]}}(\bar z)
+\mathcal  J^{\overline{L_\Gamma^{(2)}}}_{\delta \overline{L_\Gamma^{(2)}}}(\bar z)-\overline{ L_\Gamma^{(2)}( z)} \prodint{(\xi^{(2)})_w,K^{[l]}(\bar z,w)}\mathcal L^{(2)},\overline{ L_\Gamma^{(2)}( z)} \mathcal J^{L_{\mathfrak C}^{(1)}}_{K^{[l]}}(\bar z)\bigg]\\=
\begin{aligned}[t]
&\bigg[\overline{L_\Gamma^{(2)}( z)} \mathcal  J^{\overline{L_\Gamma^{(2)}}}_{K_{C_1}^{[l-2N_\Gamma+1]}}(\bar z)
+\mathcal  J^{\overline{L_\Gamma^{(2)}}}_{\delta \overline{L_\Gamma^{(2)}}}(\bar z)-\overline{ L_\Gamma^{(2)}( z)} \prodint{(\xi^{(2)})_w,K^{[l-2N_\Gamma+1]}(\bar z,w)}\mathcal L^{(2)},\overline{ L_\Gamma^{(2)}( z)} \mathcal J^{L_{\mathfrak C}^{(1)}}_{K^{[l-2N_\Gamma+1]}}(\bar z)\bigg]
\\&+
\overline{L_\Gamma^{(2)}( z)}\sum_{k=l-2N_\Gamma+1}^{l-1}\overline{\phi_{2,k}(z)}(H_k)^{-1}\bigg[ \mathcal  J^{\overline{L_\Gamma^{(2)}}}_{C_{1,k}}
-\prodint{\xi^{(2)},\phi_{1,k} }\mathcal L^{(2)},\mathcal J^{L_{\mathfrak C}^{(1)}}_{\phi_{1,k}}\bigg],
\end{aligned}
\end{multline*}
\begin{multline*}
\bigg[{L_\Gamma^{(1)}( z)} \mathcal  J^{\overline{L_\Gamma^{(1)}}}_{K_{C_2}^{[l]}}( z)
+\mathcal  J^{\overline{L_\Gamma^{(1)}}}_{\delta {L_\Gamma^{(1)}}}( z)-{ L_\Gamma^{(1)}( z)} \prodint{\overline{(\xi^{(1)})_w},K^{[l]}(\bar w,z)}\overline{\mathcal L^{(1)}},{ L_\Gamma^{(1)}( z)} \mathcal J^{\overline{L_{\mathfrak C}^{(2)}}}_{K^{[l]}}( z)\bigg]\\=
\begin{aligned}[t]
&\bigg[{L_\Gamma^{(1)}( z)} \mathcal  J^{\overline{L_\Gamma^{(1)}}}_{K_{C_2}^{[l-2N_\Gamma+1]}}( z)
+\mathcal  J^{\overline{L_\Gamma^{(1)}}}_{\delta {L_\Gamma^{(1)}}}( z)-{ L_\Gamma^{(1)}( z)} \prodint{\overline{(\xi^{(1)})_w},K^{[l-2N_\Gamma+1]}(\bar w,z)}\overline{\mathcal L^{(1)}},{ L_\Gamma^{(1)}( z)} \mathcal J^{\overline{L_{\mathfrak C}^{(2)}}}_{K^{[l-2N_\Gamma+1]}}( z)\bigg]
\\&+
{L_\Gamma^{(1)}( z)}\sum_{k=l-2N_\Gamma+1}^{l-1}{\phi_{1,k}(z)}(H_k)^{-1}\bigg[ \overline{\mathcal  J^{\overline{L_\Gamma^{(1)}}}_{C_{2,k}}
-\prodint{\xi^{(1)},\phi_{2,k} }\mathcal L^{(1)}},\overline{\mathcal J^{L_{\mathfrak C}^{(2)}}_{\phi_{2,k}}}\bigg],
\end{aligned}
\end{multline*}
in where we see that the last term in the RHS is a linear combination of the rows present in the matrix and consequently, can be disregarded in the computation of the determinant and   we find \eqref{Ger22} and \eqref{Ger11}.
\end{proof}

\end{document}